\numberwithin{equation}{section}
\let\cal\mathcal
\def\Ascr{{\cal A}}
\def\Bscr{{\cal B}}
\def\Dscr{{\cal D}}
\def\Escr{{\cal E}}
\def\Fscr{{\cal F}}
\def\Hscr{{\cal H}}
\def\Lscr{{\cal L}}
\def\Mscr{{\cal M}}
\def\Oscr{{\cal O}}
\def\Rscr{{\cal R}}
\def\Tscr{{\cal T}}
\def\Wscr{{\cal W}}
\def\Xscr{{\cal X}}
\let\blb\mathbb
\let\Bbb\mathbb
\def\CC{{\blb C}}
\def\QQ{{\blb Q}}
\def \PP{{\blb P}}
\def \ZZ{{\blb Z}}
\def \NN{{\blb N}}
\def \RR{{\blb R}}
\let\oldmarginpar\marginpar
\def\marginpar#1{\oldmarginpar{\raggedright\tiny #1}}
\def\id{\text{id}}
\def\Res{\operatorname{Res}}
\def\Lotimes{\overset{L}{\otimes}}
\def\quot{/\mskip-1.8\thinmuskip/}
\def\mod{\operatorname{mod}}
\def\Gr{\operatorname{Gr}}
\def\Lie{\mathop{\text{Lie}}}
\def\Qch{\operatorname{Qch}}
\def\coh{\mathop{\text{\upshape{coh}}}}
\def\rad{\operatorname {rad}}
\def\Spec{\operatorname {Spec}}
\def\GL{\operatorname {GL}}
\def\depth{\operatorname {depth}}
\def\Ext{\operatorname {Ext}}
\def\Hom{\operatorname {Hom}}
\def\uEnd{\operatorname {\mathcal{E}\mathit{nd}}}
\def\End{\operatorname {End}}
\def\RHom{\operatorname {RHom}}
\def\uRHom{\operatorname {R\mathcal{H}\mathit{om}}}
\def\Sl{\operatorname {SL}}
\def\Sp{\operatorname {Sp}}
\def\End{\operatorname {End}}
\def\id{{\operatorname {id}}}
\def\add{\operatorname {add}}
\def\rk{\operatorname {rk}}
\def\gldim{\operatorname {gl\,dim}}
\def\r{\rightarrow}
\newtheorem{lemma}{Lemma}[section]
\newtheorem{proposition}[lemma]{Proposition}
\newtheorem{theorem}[lemma]{Theorem}
\newtheorem{corollary}[lemma]{Corollary}
\newtheorem{lemmas}{Lemma}[subsection]
\theoremstyle{definition}
\newtheorem{example}[lemma]{Example}
\newtheorem{definition}[lemma]{Definition}
\newtheorem{conjecture}[lemma]{Conjecture}
\newtheorem{question}[lemma]{Question}
\theoremstyle{remark}
\newtheorem{remark}[lemma]{Remark}
\newdimen\uboxsep \uboxsep=1ex
\def\uboxn#1{\vtop to 0pt{\hrule height 0pt depth 0pt\vskip\uboxsep
\hbox to 0pt{\hss #1\hss}\vss}}
\def\uboxs#1{\vbox to 0pt{\vss\hbox to 0pt{\hss #1\hss}
\vskip\uboxsep\hrule height 0pt depth 0pt}}
\def\ggd{\operatorname{gcd}}
\def\Perf{\operatorname{Perf}}
\def\Vol{\operatorname{Vol}}
\def\Sym{\operatorname{Sym}}
\def\HP{\operatorname{HP}}
\def\Refe{\operatorname{ref}}
\def\ggamma{{\fontencoding{OT2}\selectfont\text{\izhitsa}}}
\def\ggamma{\nu}
\def\Imm{\operatorname{Im}}
\begin{document}
\volume{?}
\title{Non-commutative crepant resolutions, an overview}
\titlemark{Non-commutative crepant resolutions}
\emsauthor{1}{Michel Van den Bergh}{M.~Van den Bergh}
\emsaffil{1}{Free University of Brussels\\ Pleinlaan 2\\ 1050 Brussel \email{michel.van.den.bergh@vub.be}}
\emsaffil{1}{Hasselt University\\ Martelarenlaan 42\\ 3500 Hasselt \email{michel.vandenbergh@uhasselt.be}}
\emsaffil{1}{Research Foundation - Flanders\\ Egmontstraat 5\\ 1000 Brussel}
\classification[14E30, 18E30]{14A22}
\keywords{Non-commutative crepant resolutions, tilting objects, quotient singularities}
\begin{abstract}
  Non-commutative crepant resolutions (NCCRs) are non-commutative analogues of the usual crepant resolutions that appear in algebraic geometry.
In this paper we survey some results around NCCRs.
\end{abstract}
\maketitle
      \section{Introduction}
      In this paper we will give an introduction to non-commutative crepant resolutions with some emphasis on our joint work with \v{S}pela \v{S}penko
      about quotient singularities of reductive groups. Other surveys are \cite{Leuschke,Wemyss1,SpelaECM}.
\subsection{Notation and conventions}
We fix a few notations and definitions which are mostly self explanatory. For simplicity we assume throughout that  $k$ is an algebraically closed field of characteristic zero, although this is often not necessary.
In \S\ref{sec:stringy} we put $k=\CC$ when invoking Hodge theory. For us an \emph{algebraic variety} is a possibly singular integral separated scheme of finite type over $k$.

Modules over rings or sheaves of rings are left modules. Right modules are indicated by $(-)^\circ$.
If $\Lambda$ is a ring then we denote by $D(\Lambda)$ the unbounded derived category of complexes of $\Lambda$-modules and by $\Perf(\Lambda)$ its full subcategory
of perfect $\Lambda$-complexes. If $\Lambda$ is noetherian then we write $\mod(\Lambda)$ for the category of finitely generated
$\Lambda$-modules. We also put $\Dscr(\Lambda)=D^b(\mod(\Lambda))$.
We use similar notations in the geometric context.
If $\Xscr$ is an Artin stack and $\Lambda$ is a
quasi-coherent sheaf of rings on $\Xscr$ then $D_{\Qch}(\Lambda)$ is the unbounded derived category
of complexes of left $\Lambda$-modules with quasi-coherent cohomology. The category of perfect $\Lambda$-complexes is denoted by $\Perf(\Lambda)$
and we also put $\Dscr(\Lambda)=D^b(\coh(\Lambda))$ when $\Lambda$ is noetherian. If $\Lambda=\Oscr_{\Xscr}$ then we replace $\Lambda$ in the
notations by $\Xscr$.

A finitely generated~$R$-module $M$ over a normal noetherian domain is said to \emph{reflexive}
if the canonical map $M\mapsto M^{\vee\vee}$ is an isomorphism where $M^\vee=\Hom_R(M,R)$. This implies in particular that $M$ is torsion free.
If $R$ a commutative Noetherian domain then a \emph{maximal Cohen-Macaulay} $R$-module is an $R$-module $M$
such that $M_m$ is maximal Cohen-Macaulay as $R_m$-module for every maximal ideal $m$. If~$R$ is has
finite injective dimension then we say that~$R$ is \emph{Gorenstein}. This implies that $R$ is maximal Cohen-Macaulay.
\subsection{Crepant resolutions and derived equivalences}
\label{ssec:crepant}
 Let $X$ be a normal algebraic variety  with Gorenstein
singularities. A resolution of singularities $\pi:Y\r X$ is said to
\emph{crepant} if $\pi^\ast \omega_X=\omega_Y$. In some sense
a crepant resolution is the tightest possible smooth approximation of an algebraic variety. Such crepant resolutions need not exist however. 
For starters, their existence implies that
$X$ has rational singularities \cite[Corollary 5.24]{KM} and this already strong restriction is far from sufficient. For example the three-dimensional
hypersurface singularities
\begin{equation}
  \label{eq:An}
  x^2+y^2+z^2+w^n=0 \qquad (n\ge 2)
  \end{equation}
have crepant resolutions if and only if $n$ is even \cite[Corollary 1.16]{Reid}. 
When crepant resolution do exist they are generally not unique. E.g.
\begin{equation}
  \label{eq:atiyah}
  xy-zw=0,
\end{equation}
which corresponds to $n=2$ in \eqref{eq:An},
  has two distinct crepant resolutions given by blowing up $(x,z)$ and
  $(x,w)$. This is the so-called ``Atiyah flop''.

Nonetheless experience has shown that such different crepant resolutions
are strongly related. In particular we have the following result:
\begin{theorem}[\protect{\cite{Kon95,Batyrev1}}, see also \S\ref{sec:stringy} below] Assume that $X$ has canonical
  Gorenstein singularities. Then the Hodge numbers of $Y$ for a crepant resolution $Y\rightarrow X$ are independent
  of $Y$.
\end{theorem}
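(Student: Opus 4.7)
The plan is to follow Batyrev's and Kontsevich's approach via p-adic (resp.\ motivic) integration. The key observation is that the Hodge numbers of a smooth projective variety $Y$ are encoded in its $E$-polynomial $E(Y;u,v)=\sum_{p,q}(-1)^{p+q}h^{p,q}(Y)u^pv^q$, so it suffices to prove that $E(Y)$ depends only on $X$, not on the chosen crepant resolution $\pi:Y\to X$.

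First I would spread $X$, $Y_1$, $Y_2$ out over a finitely generated $\ZZ$-subalgebra of $k$ and reduce modulo a large prime so as to place ourselves over a $p$-adic field $K$ with ring of integers $\Oscr_K$ and residue field $\FF_q$. By the Weil conjectures combined with $p$-adic Hodge theory, the point counts $|Y_i(\FF_{q^n})|$ (for all $n$) determine the Hodge numbers of $Y_i$, so it is enough to compare these point counts (equivalently, to compare classes in the completed Grothendieck ring of varieties). The plan is then to identify these counts with a $p$-adic integral that is manifestly intrinsic to $X$.

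Next I would consider the integral
\begin{equation}
I(Y_i)=\int_{Y_i(\Oscr_K)}|\omega_{Y_i}|_p,
\end{equation}
where $\omega_{Y_i}$ is a local generator of the canonical sheaf, well-defined up to a unit because $Y_i$ is smooth. A direct computation shows that $I(Y_i)$ is, up to an explicit factor, equal to $|Y_i(\FF_q)|/q^{\dim Y_i}$. Now I would apply the $p$-adic change-of-variables formula along $\pi_i:Y_i\to X$: this introduces a Jacobian factor involving the relative canonical $\omega_{Y_i/X}=\omega_{Y_i}\otimes \pi_i^\ast \omega_X^{-1}$. Here the crepancy hypothesis $\pi_i^\ast\omega_X=\omega_{Y_i}$ is crucial: the relative canonical is trivial, so the Jacobian disappears, and after pushing forward one sees that $I(Y_i)$ equals an integral over the smooth locus $X^{\mathrm{reg}}(\Oscr_K)$ of $|\omega_X|_p$, independent of $i$. (The contribution of points reducing into $X\setminus X^{\mathrm{reg}}$ has to be shown to be negligible, which uses a log resolution and the canonical Gorenstein hypothesis to ensure the relevant discrepancies are $\ge 0$ so that the integrand extends.)

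The main obstacle is this last point: to make the change of variables rigorous one has to handle the locus over the singular set of $X$ and show the integrals converge and match on both sides. This is exactly the content of Batyrev's stringy $E$-function machinery (or, in Kontsevich's formulation, of the change-of-variables formula in motivic integration). Once that machinery is granted, the computation above shows $I(Y_1)=I(Y_2)$, hence $|Y_1(\FF_{q^n})|=|Y_2(\FF_{q^n})|$ for a dense set of $n$, hence $E(Y_1)=E(Y_2)$, and therefore $h^{p,q}(Y_1)=h^{p,q}(Y_2)$ for all $p,q$.
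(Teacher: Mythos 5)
Your proposal is correct in outline, but it takes the arithmetic (p-adic) route rather than the route the paper points to: in \S\ref{sec:stringy} the statement is deduced from the motivic-integration formalism, namely that the stringy $E$-function $E_{st}(X,u,v)$ is an invariant of $X$ alone (independent of the chosen log resolution) and coincides with the Hodge $E$-polynomial $E(Y,u,v)$ whenever $Y\rightarrow X$ is crepant; since the $E$-polynomial of a smooth projective $Y$ determines its Hodge numbers, the claim follows. That argument works directly with the Hodge realization $K_0(\operatorname{Var}/\CC)\rightarrow \ZZ[u,v]$ and never leaves characteristic zero. Your version replaces the motivic measure by Weil's $p$-adic measure $\int_{Y(\Oscr_K)}|\omega|_p$, and the crepancy computation you describe (trivial relative canonical, hence trivial Jacobian) is exactly Batyrev's original argument; it is cleanest if you compare $Y_1$ and $Y_2$ through a common resolution $Z\rightarrow Y_1,Y_2$ (where $K_{Z/Y_1}=K_{Z/Y_2}$ because both $\pi_i$ are crepant over $X$) rather than pushing forward to the singular $X$, which forces you to make sense of $|\omega_X|_p$ near the singular locus.

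The one step you should not pass over lightly is the claim that the point counts $|Y_i(\FF_{q^n})|$ determine the Hodge numbers. The Weil conjectures (purity) only let you separate the cohomological degrees and hence recover the \emph{Betti} numbers --- this is precisely where Batyrev's 1997 argument stopped, and why Kontsevich introduced motivic integration. Upgrading to Hodge numbers along your route requires the genuinely nontrivial input from $p$-adic Hodge theory that, for a smooth projective variety with good reduction, the crystalline Frobenius (known from the zeta function) determines the Hodge--Tate weights, i.e.\ the Hodge numbers; this is a theorem of T.~Ito (and, in a related form, C.-L.~Wang), not a formal consequence of the Weil conjectures. With that input granted, and with the caveat that for non-proper $X$ one must interpret ``Hodge numbers'' via the $E$-polynomial of compactly supported cohomology with its mixed Hodge structure, your argument is a complete and valid alternative to the motivic one.
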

Kawamata and independently Bondal and Orlov in their lecture at ICM2002 conjectured an analogous categorical result of which we state a variant\footnote{We have omitted the projectivity hypotheses which appear in the original context and we have extended the conjecture to DM-stacks which is the natural context as will
  become clear below.}
  below.
\begin{conjecture}[\protect{\cite[Conjecture 4.4]{BonOrl}, \cite[Conjecture 1.2]{KawDK}}] \label{con:BO} Assume $X$ is a normal algebraic variety with Gorenstein singularities and $\pi_i:Y_i\rightarrow X$ for $i=1,2$ are two crepant resolutions (by schemes or DM-stacks).
  Then there is an equivalence of triangulated categories $F:\Dscr(Y_1)\cong \Dscr(Y_2)$, \emph{linear over $X$} (cfr Remark \ref{rem:linearity} below).
\end{conjecture}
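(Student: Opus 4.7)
My strategy is to use a non-commutative crepant resolution of $X$ as an intermediate object bridging $Y_1$ and $Y_2$. The plan is to construct, for each $i=1,2$, a tilting object $\Tscr_i \in \Dscr(Y_i)$ whose pushed-forward endomorphism algebra $\Lambda_i := \pi_{i\ast}\uEnd_{Y_i}(\Tscr_i)$ is a (sheaf-theoretic) NCCR of $X$. Tilting theory then produces equivalences
\[
\RHom_{Y_i}(\Tscr_i,-):\Dscr(Y_i)\xrightarrow{\sim}\Dscr(\Lambda_i),
\]
which are automatically linear over $X$ by the projection formula: since $\Tscr_i$ is a perfect complex, $\RHom_{Y_i}(\Tscr_i,-)$ commutes with tensoring by pullbacks of perfect $\Oscr_X$-complexes. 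If $\Lambda_1$ and $\Lambda_2$ can then be shown to be derived Morita equivalent as $\Oscr_X$-algebras, composing the two equivalences yields the desired functor $F$.

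The first step, existence of $\Tscr_i$, I would attack locally on $X$, say after completing at a closed point. The idea is to invoke the Bondal--Van den Bergh theorem to obtain a compact generator of $\Dscr(Y_i)$ and then upgrade it to a tilting bundle by killing higher self-extensions; this is feasible for small resolutions of Gorenstein $3$-folds, for toric varieties, for symplectic quotient singularities, and for GIT resolutions through Halpern-Leistner--Sam grade-restriction windows. The crepancy hypothesis $\pi_i^\ast\omega_X=\omega_{Y_i}$ enters decisively through Grothendieck--Serre duality: it forces $\Lambda_i$ to be symmetric over $\Oscr_X$, and together with finite global dimension coming from smoothness of $Y_i$, this is precisely what upgrades $\Lambda_i$ from a mere non-commutative resolution to an NCCR.

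The second step compares $\Lambda_1$ and $\Lambda_2$. Both arise as endomorphism algebras of reflexive maximal Cohen-Macaulay $\Oscr_X$-modules $M_i = \pi_{i\ast}\Tscr_i$. For Gorenstein terminal threefolds, Iyama--Wemyss mutation theory shows that any two NCCRs of $X$ are connected by a finite sequence of explicit derived equivalences implemented by tilting bimodules supported in codimension $\ge 2$, and iterating these produces the equivalence $\Dscr(\Lambda_1)\simeq\Dscr(\Lambda_2)$ over $X$. In the GIT or quotient setting one instead hopes for a wall-crossing interpretation: the two $\Tscr_i$ arise from different stability chambers of a common ambient moduli problem, and the equivalence is realised by window shifts across the intervening walls.

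The main obstacle, and the reason this conjecture remains open, is the first step: producing tilting bundles on crepant resolutions in sufficient generality. Existence is known in the classes listed above but is wide open in general and very likely demands substantial new input. A secondary difficulty is that in dimensions four and higher two NCCRs of the same $X$ need not be derived equivalent at all, and some NCCRs correspond to no geometric resolution, so the comparison step cannot be purely formal but must exploit additional structure (symplectic form, reductive group action, toric data) specific to the class of singularities under consideration. For this reason, any realistic attempt at the conjecture in its stated generality should probably first isolate such a structural framework and prove the result there before hoping to treat arbitrary Gorenstein singularities.
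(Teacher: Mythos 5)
This statement is a conjecture (Bondal--Orlov/Kawamata); the paper offers no proof and explicitly records it as open, known only in dimension $\le 3$, for toric varieties, for symplectic singularities up to \'etale cover, and for various specific families. Your proposal is therefore correctly not a proof but a strategy sketch, and the strategy you describe --- pass through tilting objects $\Tscr_i$ on the $Y_i$, identify the endomorphism algebras as NCCRs of $X$ via Theorem \ref{th:iw}, and compare the NCCRs by Iyama--Wemyss mutation or by window shifts --- is essentially the route the paper itself surveys in \S\ref{sec:bridgeland}, \S\ref{ssec:tilting} and \S\ref{ssec:gitcrepant}, together with Conjecture \ref{ref-4.6-10}.

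Two points where your sketch goes astray. First, you describe the existence of tilting bundles on crepant resolutions as ``wide open in general,'' but it is in fact known to \emph{fail}: by Dao's theorem a factorial Gorenstein hypersurface singularity of dimension $3$ admits no NCCR, while Lin exhibits such singularities possessing crepant resolutions (Example \ref{ex:dao}); by Theorem \ref{th:iw} these resolutions admit no tilting complex at all. So the global NCCR strategy cannot work as stated even in dimension $3$. The actual dimension-$3$ proof circumvents this by first invoking Kawamata's theorem that projective crepant resolutions are connected by a chain of flops, and then applying the tilting/NCCR machinery \emph{locally to each flopping contraction}, whose base is terminal and whose fibers have dimension $\le 1$ so that the semi-universal extension construction \eqref{eq:tilting} produces the required tilting bundle. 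Your proposal omits this reduction, which is the step that makes the method viable. Second, your claim that in dimension $\ge 4$ ``two NCCRs of the same $X$ need not be derived equivalent at all'' is an overstatement: no such counterexample is known, and Conjecture \ref{ref-4.6-10} asserts precisely the opposite. What is true is that the comparison step is open in general, not that it is known to fail.
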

The conjecture is known (under some probably unnecessary projectivity hypotheses) in dimension $\le 3$, by the work of Bridgeland \cite{Bridgeland}
(see \S\ref{sec:bridgeland} below), and for toric
varieties, by the work of Kawamata \cite{Kaw}. For symplectic singularities \cite{Beauville} it is true, up to an \'etale covering of $X$, by \cite[Theorem 1.6]{Kaledin}.
Furthermore it is known for many specific crepant resolutions, e.g.\ those related by variation of GIT
\cite{HLShipman,BFK,HLSam} (see also \S\ref{ssec:gitcrepant} below).

\begin{remark}
  \label{rem:notexplicit}
Conjecture \ref{con:BO} makes no statement about the nature of the equivalence $\Dscr(Y_1)\cong \Dscr(Y_2)$. In the case of the Atiyah flop
one possible equivalence is given by the Fourier-Mukai functor for the ``fiber product kernel'' $\Oscr_{Y_1\times_X Y_2}$ \cite[Theorem 3.6]{BondalOrlovSemi}
(see also \cite{bondalbodzenta}) but this is far from the
only possibility. Furthermore,
$\Oscr_{Y_1\times_X Y_2}$ does not always work as Example \ref{ex:cotangent} below shows.

It is now understood, thanks to intuition from mirror symmetry, that the equivalences in Conjecture \ref{con:BO} should be canonically associated
to  paths connecting two points in a topological space called the ``stringy K\"ahler moduli space'' (SKMS). 
In the case of the
Atiyah flop the SKMS is given by $\PP^1-\{0,1,\infty\}$ \cite{Donovan,HLSam}. See also \cite{HiranoWemyss} and \S\ref{sec:SKMS} below. The fact that
the asserted equivalence in Conjecture \ref{con:BO} is expected to be non-canonical by itself might be the reason that the conjecture seems difficult to prove. 
\end{remark}
Below $\Gr(d,n)$ is the Grassmannian of $d$-dimensional subspaces of the $n$-dimensional
vector space $k^n$.
\begin{example} \label{ex:cotangent} The cotangent bundles  $T^\ast \Gr(d,n)$ and $T^\ast \Gr(n-d,n)$, for  complementary Grassmannians with $d\le n/2$ are crepant resolutions
  of $\overline{B(d)}:=\{X\in M_n(k)\mid X^2=0, \rk X\le d\}$ (e.g.\ \cite[\S6.1]{CautisGrass}).
  According to \cite[\S6]{CautisGrass} there is an equivalence $F:\Dscr(T^\ast \Gr(d,n))\rightarrow \Dscr(T^\ast \Gr(n-d,n))$
but it is not given by the fiber product kernel (see \cite{Namikawa,Kawamata} for the case $(k,n)=(2,4)$).
\end{example}

\begin{remark} \label{rem:linearity}
As said one requires the derived equivalence $F$ in Conjecture \ref{con:BO} to be 
  linear over $X$. On the most basic level this means the following: let $\Perf(X)$ be the category of perfect complexes on $X$.
Then $\Dscr(Y_1)$, $\Dscr(Y_2)$ are
  $\Perf(X)$-modules, where $A\in \Perf(X)$ acts as $L\pi_{i}^\ast A\Lotimes_{Y_{i}}-$, for $i=1,2$, and we want 
  natural isomorphisms
  $F(L\pi^\ast_1A\Lotimes_{Y_1} -)\cong L\pi_2^\ast A \Lotimes_{Y_2} F(-)$ satisfying the appropriate compatibilities. \emph{To simplify the
    exposition we will implicitly assume in the rest of this paper that all constructions satisfy the appropriate linearity hypotheses.}
\end{remark}
\subsection{Non-commutative rings}
Most of the results below will be based on the interplay between algebraic geometry and non-commutative rings.
The relation between those subjects was first
observed by Beilinson \cite{Beilinson}. The connection is via tilting complexes.
\begin{definition} \label{def:tilting}
  Let $Y$ be a Noetherian scheme. A \emph{partial tilting complex} $\Tscr$ on~$Y$ is a perfect complex such that $\Ext^i_Y(\Tscr,\Tscr)=0$ for $i\neq 0$.
A \emph{tilting complex} is a partial tilting complex that \emph{generates} $D_{\Qch}(Y)$ in the sense that its right orthogonal is zero, i.e.\ $\RHom_Y(\Tscr,\Fscr)=0$ implies $\Fscr=0$. 
A \emph{(partial) tilting bundle} is a (partial) tilting complex which is a vector bundle.
\end{definition}
Below we will also use tilting complexes in slightly more general contexts (e.g.\ DM-stacks).
Very general results concerning tilting complexes are \cite[Theorems 1,2 ]{KellerKrause}.
For simplicity we state a slightly dumbed down version of them,
although below we will sometimes silently rely on the stronger results in loc.\ cit. See also \cite{Ri,Bondal2}. 
\begin{theorem}[\protect{\cite[Theorems 1,2 ]{KellerKrause}}] \label{th:kellerkrause} 
If $\Tscr$ is a tilting complex on a noetherian scheme~$Y$ then $\RHom_Y(\Tscr,-)$ defines 
an equivalence of categories between $D_{\Qch}(Y)$ and $D(\Lambda^\circ)$ for $\Lambda=\End_Y(\Tscr)$. Moreover, if $Y$ is regular
then $\Lambda$ has finite global dimension. If furthermore~$\Lambda$ is right noetherian
then $\RHom_Y(\Tscr,-)$ restricts to an equivalence of categories $\Dscr(Y)\cong \Dscr(\Lambda^\circ)$.
\end{theorem}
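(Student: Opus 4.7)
The plan is to treat the three claims in sequence, with the derived equivalence doing all the real work. The key observation is that $\Tscr$ is a \emph{compact generator} of $D_{\Qch}(Y)$: it is compact because perfect complexes on a noetherian scheme coincide with the compact objects, and it is a generator by the tilting hypothesis. Combined with the partial tilting condition $\RHom_Y(\Tscr,\Tscr)=\Lambda$ concentrated in degree zero, this puts us in a position to apply a Morita-type reconstruction.

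For the equivalence itself, I would consider $F=\RHom_Y(\Tscr,-)\colon D_{\Qch}(Y)\to D(\Lambda^\circ)$ together with a left adjoint $G=-\Lotimes_\Lambda\Tscr$ obtained after replacing $\Tscr$ by a K-flat complex of $(\Oscr_Y,\Lambda)$-bimodules (or, equivalently, by passing through a DG-enhancement). On the generator $\Lambda\in D(\Lambda^\circ)$ the unit $\Lambda\to FG(\Lambda)=\RHom_Y(\Tscr,\Tscr)$ is an isomorphism by hypothesis; since both $F$ and $G$ preserve arbitrary direct sums ($F$ does so precisely because $\Tscr$ is compact), the full subcategory of $D(\Lambda^\circ)$ on which the unit is an isomorphism is triangulated and closed under coproducts, hence equals $D(\Lambda^\circ)$ as it contains $\Lambda$. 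A symmetric argument for the counit, using that $\Tscr$ generates $D_{\Qch}(Y)$, delivers the asserted equivalence.

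For the finite global dimension when $Y$ is regular, $\Tscr$ being perfect gives $F$ a uniformly bounded cohomological amplitude on $D_{\Qch}(Y)$: locally $\Tscr$ is a bounded complex of vector bundles, so $\RHom_Y(\Tscr,-)$ is controlled by its Tor-amplitude plus the bounded cohomological dimension of $Y$. Transported through the equivalence, this forces $\Ext^i_{\Lambda^\circ}(M,N)=0$ for $i$ larger than a constant independent of $M$ and $N$, which is the definition of finite global dimension. Part (3) is then formal: under both hypotheses, $\Dscr(Y)$ and $\Dscr(\Lambda^\circ)$ each coincide with the perfect complexes on their respective sides (using $Y$ regular on one side and $\Lambda$ right noetherian of finite global dimension on the other), and perfect = compact is preserved by any coproduct-preserving equivalence.

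The hard part, and really the only step that is not purely formal manipulation, is the rigorous construction of the left adjoint $G$: producing a well-defined derived tensor product $-\Lotimes_\Lambda\Tscr$ requires upgrading $\Tscr$ to an honest bimodule using a DG-enhancement, h-injective resolution, or similar model-theoretic device. Once that bookkeeping is absorbed, the rest of the argument is the standard "compact generator plus closure under coproducts and triangles" pattern.
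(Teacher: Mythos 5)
The survey does not actually prove this statement; it quotes it from \cite{KellerKrause} (with \cite{Ri,Bondal2} as background), so there is no internal argument to compare against and your attempt has to stand on its own. Your first and third steps do: $\Tscr$ is a compact generator of the compactly generated category $D_{\Qch}(Y)$, the unit and counit of the adjunction between $G=-\Lotimes_\Lambda\Tscr$ and $F=\RHom_Y(\Tscr,-)$ are isomorphisms on the compact generators $\Lambda$ and $\Tscr$, and the d\'evissage over localizing subcategories gives the equivalence; likewise, once $\Lambda$ is right noetherian of finite global dimension, $\Dscr(\Lambda^\circ)=\Perf(\Lambda^\circ)$ and $\Dscr(Y)=\Perf(Y)$ are exactly the compact objects on either side, which any equivalence matches up. This is the standard route and is essentially what the references do for those parts.

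The genuine gap is in the middle step, and it is precisely the point the cited note was written to settle (its title is ``Tilting preserves finite global dimension'', and its authors emphasize that a complete proof of this step was missing from the literature). To bound $\Ext^i_{\Lambda^\circ}(M,N)=\Hom_{D_{\Qch}(Y)}(GM,GN[i])$ for all modules $M,N$ at once, you need $GM$ and $GN$ to lie in a \emph{uniformly bounded} range $D^{[a,b]}_{\Qch}(Y)$; only then does regularity of $Y$ (together with finite Krull dimension, which you are tacitly assuming and which is not automatic for a regular noetherian scheme) kill the Hom for large $i$. What perfection of $\Tscr$ actually gives you is a bounded cohomological amplitude for $F$, i.e.\ $F(D^{[p,q]})\subset D^{[p-s,q+t]}$, and this does \emph{not} formally bound the amplitude of $G$ on the heart: knowing $F(X)\in D^{[0,0]}$ is compatible with $X$ having cohomology in arbitrarily negative degrees, because the contributions of the various $H^j(X)$ to $F(X)$ can cancel in the relevant spectral sequence. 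One must invoke the finite global dimension of $Y$ a second time before transporting --- for instance to split $X$ across cohomological gaps wider than the global dimension, or, as in \cite{KellerKrause}, to show that the t-structure transported from $D(\Lambda^\circ)$ is at bounded distance from the standard one. As written, ``transported through the equivalence, this forces $\Ext^i_{\Lambda^\circ}(M,N)=0$ for $i$ large'' asserts exactly the statement that needs proof.
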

So a tilting complex reduces the homological algebra of $Y$ to the usually non-commutative ring $\Lambda=\End_Y(\Tscr)$.
In the case of projective space $\PP^n$ one can take $\Tscr=\Oscr\oplus\Oscr(1)\oplus\cdots \oplus\Oscr(n)$ \cite{Beilinson}.

\subsection{Bridgeland's result}
\label{sec:bridgeland}
\subsubsection{Flops}
\label{sec:flops}
Let us return to Conjecture \ref{con:BO}.
In the absence of any specific conjectural construction of the asserted derived equivalence (see Remark \ref{rem:notexplicit}) one may try to use the fact that
if $\pi_1$, $\pi_2$ are projective then $Y_1$, $Y_2$ are connected by a sequence of ``flops'' \cite[Theorem 1]{KawamataFlops},
so that it is then sufficient to prove the conjecture for flops. Recall that crepant resolutions $\pi_1:Y_1\rightarrow X$, $\pi_2:Y_2\rightarrow X$
form a flop if $X$ has terminal singularities \cite[Definition 2.12]{KM} and there is a line bundle $\Lscr$ on $Y_1$, relatively ample for $\pi_1$, such that the corresponding\footnote{This makes sense since $Y_1$ and $Y_2$ are isomorphic in codimension one.}  line bundle $\Lscr'$ on~$Y_2$ is anti-ample.

In \cite{Bridgeland}   Bridgeland proves that Conjecture \ref{con:BO} is true for three-dimensional flops (see also \cite{Chen}).
The key point is that the fibers of $\pi_1$, $\pi_2$ have  dimension $\le 1$. In the next section we explain a reinterpretation of Bridgeland's proof,
following \cite{VdB04a}.
\subsubsection{Maps with fibers of dimension $\le 1$}
\label{sec:1dim}
Assume that $\pi:Y\rightarrow X$ is a projective map between noetherian schemes.
We impose  the following conditions:
\begin{enumerate}
\item $R\pi_\ast \Oscr_Y=\Oscr_X$.
\item  The fibers  of $\pi$ have dimension $\le 1$.
\end{enumerate}
To simplify the discussion we will restrict ourselves furthermore to the case that $X=\Spec R$ is affine.\footnote{In \cite{VdB04a} $X$ is assumed to be quasi-projective.}
It turns out that in this case $\coh(Y)$ contains a tilting bundle which is of the form $\Tscr:=\Oscr_Y\oplus \Tscr_0$ where $\Tscr_0$ is obtained as an 
extension
 \begin{equation}
   \label{eq:tilting}
  0\rightarrow \Oscr_Y^r \rightarrow \Tscr_0\rightarrow \Lscr\rightarrow 0
\end{equation}
where $\Lscr$ is an ample line bundle on $\Oscr_Y$ generated by global sections and 
\eqref{eq:tilting} is associated to an arbitrary finite set of generators of $H^1(Y,\Lscr^{-1})$ as $R$-module (see \cite[(3.1)]{VdB04a}).
\begin{remark} \label{rem:backward}
  Note that by hypotheses (1), $\Oscr_X$, $\Lscr$ are partial tilting bundles on $Y$ such that $\Oscr_X\oplus \Lscr$ generates $D_{\Qch}(Y)$ \cite[Lemma 3.2.2]{VdB04a}.
  Moreover (2) and the fact that $\Lscr$ is generated by global sections implies $\Ext^{>0}_Y(\Oscr_Y,\Lscr)=0$. Likewise (2)  implies $\Ext^{>1}_Y(\Lscr,\Oscr_Y)=0$.
  The construction of the tilting bundle $\Tscr$ is based on the principle of ``killing the remaining backward $\Ext^1$''   in the sequence $(\Oscr_X,\Lscr)$ by a so-called ``semi-universal extension''.
  This principle extends to longer sequences. See e.g.\ \cite[Lemma 2.4]{Hara}, \cite[Lemma 3.1]{HillePerling1}. See also \S3.4 below for another application.
\end{remark}
So if we put $\Lambda=\End_Y(\Tscr)$, then we have $\End_Y(\Tscr^\vee)=\Lambda^\circ$ and from Theorem \ref{th:kellerkrause}
we obtain equivalences
\begin{equation}
  \label{eq:equivalences}
\RHom_Y(\Tscr,-):  \Dscr(Y)\cong \Dscr(\Lambda^\circ),\qquad \RHom_Y(\Tscr^\vee,-):\Dscr(Y)\cong \Dscr(\Lambda).
\end{equation}
To understand \eqref{eq:equivalences}  we can ask what $\Lambda$ looks like.
\begin{example}
Consider again the Atiyah flop \eqref{eq:atiyah}. In this case $R=k[x,y,z,w]/(xy-zw)$.
  This is a toric singularity and one can check that its class group is $\ZZ$ with generator $I=(x,z)$. The inverse of $I$ is the fractional ideal
  $I^{-1}=x^{-1}(x,w)$. The ring $\Lambda$
 turns out to be the same (up to isomorphism) for both crepant resolutions of $\Spec R$:
  \begin{equation}
\label{eq:Lambda}    
    \Lambda=\begin{pmatrix}
      R&I\\
      I^{-1} & R
\end{pmatrix}.
\end{equation}
Interestingly $\Lambda$ is built up from the three indecomposable graded maximal Cohen-Macaulay $R$-modules: $R$, $I$ and $I^{-1}$. In particular $\Lambda$ is itself Cohen-Macaulay
as $R$-module. This last fact turns out to be true more generally.
\end{example}
\begin{theorem} \label{th:fibers} Assume that $X=\Spec R$ 
  is a normal Gorenstein variety.
Assume 
that there exists a projective crepant resolution of singularities  $\pi:Y\rightarrow X$  such that  
the dimensions 
of the fibers of $\pi$ are $\le 1$. Let $\Tscr$ be the tilting bundle defined above\footnote{As we have stated in \S\ref{ssec:crepant}, the fact that $X$ has a crepant
  resolution implies that it has rational singularities by \cite[Corollary 5.24]{KM}. Thus in particular $R\pi_\ast\Oscr_Y=\Oscr_X$.}
  and put $T=\Gamma(Y,\Tscr)$. Then $\Lambda=\End_Y(\Tscr)=\End_R(T)$.
Furthermore $\Lambda$ and $T$ are maximal Cohen-Macaulay $R$-modules.
\end{theorem}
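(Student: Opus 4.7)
The plan is to establish higher direct image vanishings for $\Tscr$ and $\Tscr^\vee$, and then to use Grothendieck duality combined with the crepant condition $\omega_Y=\pi^\ast\omega_X$ to convert the partial tilting property of $\Tscr$ into Cohen--Macaulayness of $T$ and $\Lambda$ as $R$-modules.

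First I would check that $R^{>0}\pi_\ast\Tscr=0$. Because the fibers have dimension at most one, $R^{\ge 2}\pi_\ast$ vanishes on any coherent sheaf; writing $\Lscr$ as a quotient $\Oscr_Y^N\twoheadrightarrow \Lscr$ (possible because $\Lscr$ is generated by global sections) and feeding this into the long exact sequence of $R\pi_\ast$, the vanishing $R^{>0}\pi_\ast\Oscr_Y=0$ forces $R^1\pi_\ast\Lscr=0$. The defining sequence $0\to \Oscr_Y^r\to \Tscr_0\to \Lscr\to 0$ then gives $R^{>0}\pi_\ast\Tscr=0$, so $T=\pi_\ast\Tscr$. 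The crucial step is the analogous vanishing for $\Tscr^\vee$. Dualizing the defining sequence yields
\[
0\to \Lscr^{-1}\to \Tscr_0^\vee\to \Oscr_Y^r\to 0,
\]
and the long exact sequence of $R\pi_\ast$ produces a connecting map $\delta:R^r\to H^1(Y,\Lscr^{-1})$. Unwinding the identifications, $\delta$ sends the $i$-th standard basis vector to the $i$-th class $\alpha_i\in H^1(Y,\Lscr^{-1})$ that was used to build the semi-universal extension. By construction the $\alpha_i$ generate $H^1(Y,\Lscr^{-1})$ as an $R$-module, so $\delta$ is surjective and $R^{>0}\pi_\ast\Tscr^\vee=0$.

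With the higher direct image vanishings in hand I would apply Grothendieck duality. The crepancy assumption gives $\pi^!\omega_R\cong \omega_Y$ (up to the common shift), so together with the projection formula one obtains
\[
\RHom_R(T,\omega_R)\cong R\pi_\ast(\Tscr^\vee\otimes \omega_Y)\cong R\pi_\ast\Tscr^\vee\otimes_R\omega_R,
\]
which by the previous paragraph is concentrated in degree zero; hence $\Ext^{>0}_R(T,\omega_R)=0$ and $T$ is MCM. Similarly, the partial tilting property $\Ext^{>0}_Y(\Tscr,\Tscr)=0$ says that $R\pi_\ast\HHom_Y(\Tscr,\Tscr)$ is concentrated in degree zero and equals $\End_Y(\Tscr)$. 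The same duality computation (using the self-duality $\HHom_Y(\Tscr,\Tscr)^\vee\cong \HHom_Y(\Tscr,\Tscr)$ for a vector bundle) gives $\RHom_R(\End_Y(\Tscr),\omega_R)\cong \End_Y(\Tscr)\otimes_R\omega_R$, so $\End_Y(\Tscr)$ is MCM as well.

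Finally I would identify $\End_Y(\Tscr)$ with $\End_R(T)$. The natural pushforward map $\End_Y(\Tscr)\to \End_R(T)$ is an isomorphism over the smooth locus $U\subset X$, whose complement has codimension at least two by normality of $R$. Both modules are reflexive: $\End_Y(\Tscr)$ is MCM over the Gorenstein ring $R$ and hence reflexive, while $\End_R(T)=\Hom_R(T,T)$ is reflexive because $T$ is reflexive (being MCM). A map of reflexive $R$-modules that is an isomorphism on a codimension two complement must itself be an isomorphism, so $\End_Y(\Tscr)=\End_R(T)=\Lambda$. The main obstacle in the argument is the identification of the connecting map $\delta$ with the chosen generators $\alpha_i$; once that is established, the remaining steps are a formal unraveling of Grothendieck duality together with reflexivity considerations.
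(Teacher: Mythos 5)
Your proof is correct and takes essentially the same route as the paper: the paper simply cites \cite[Lemma 3.2.9]{VdB04a} for the Cohen--Macaulayness of $\Lambda$, and your computation (surjectivity of the connecting map onto $H^1(Y,\Lscr^{-1})$ via the semi-universal extension, hence $R^{>0}\pi_\ast\Tscr^\vee=0$, then Grothendieck duality plus crepancy) is precisely the content of that lemma, while your final reflexivity/codimension-two argument identifying $\End_Y(\Tscr)$ with $\End_R(T)$ is identical to the paper's. The only cosmetic difference is that the paper gets $T$ maximal Cohen--Macaulay for free as a direct summand of $\Lambda$ (coming from the summand $\Oscr_Y$ of $\Tscr$) rather than by a separate duality computation.
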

\begin{proof} 
  The fact that $\Lambda=\End_Y(\Tscr)$ is maximal Cohen-Macaulay follows from \cite[Lemma 3.2.9]{VdB04a}
  (see also \cite[Theorem 1.5]{IW2}, stated as Theorem \ref{th:iw} below). $T$ is maximal Cohen-Macaulay because it is a direct summand of $\Lambda$ as $R$-modules.
  Functoriality yields
  a map $i:\Lambda\rightarrow \End_R(T)$ which is an isomorphism in codimension one (since the singular locus of~$X$ has codimension $\ge 2$, as $X$ is normal).
  Since $\Lambda$ is maximal Cohen-Macaulay, it is reflexive and hence~$i$ must be an isomorphism.
\end{proof}
This result applies in particular if $X$ has dimension $2$ or if it is of dimension 3 with terminal singularities since then the condition on the dimension of the fibers 
is automatic.

Let us now assume that $X$ in Conjecture \ref{con:BO} is 3-dimensional and $\pi_1$, $\pi_2$ form a flop (see \S\ref{sec:flops}). We will still be assuming that $X=\Spec R$ is affine for simplicity. For $i=1,2$ we then have tilting bundles $\Tscr_{i}$
on $Y_{i}$ defined via \eqref{eq:tilting}, using $\Lscr$ on $Y_1$ and $(\Lscr')^{-1}$ on $Y_2$ (see \S\ref{sec:flops} for $\Lscr, \Lscr'$).
Let $(\Lambda_{i})_{i=1,2}$ be the corresponding endomorphism rings.
In this case Conjecture \ref{con:BO} follows from
\[
  \Dscr(Y_1)\overset{\eqref{eq:equivalences}}{\cong} \Dscr(\Lambda_{1}^\circ),\qquad
  \Dscr(Y_2)\overset{\eqref{eq:equivalences}}{\cong} \Dscr(\Lambda_{2}), \qquad 
  \Lambda_1^\circ\overset{\text{Morita}}{\cong} \Lambda_2.
\]
The asserted Morita equivalence is obtained in \cite[\S4.4]{VdB04a} using the local structure of
3-dimensional terminal singularities (see \cite[Example 2.3]{Kollar}). Nowadays  we may use
\cite[Corollary 8.8]{IR} (see also \cite[Theorem 1.5]{IyamaWemyssNCBO}) combined with \cite[Theorem 1.5]{IW2} (stated as Theorem \ref{th:iw} below)
to obtain that in any case $\Lambda_1$, $\Lambda_2^\circ$ are derived equivalent.

\medskip

At the end of the day we find that the two crepant resolutions $Y_1$, $Y_2$ of $X$ are derived equivalent to the same non-commutative ring (either $\Lambda^\circ_1$
or $\Lambda_2$). It turns out to be fruitful to think of this intermediate non-commutative ring as a \emph{third crepant resolution} of $X$, or of $R$, namely a
\emph{non-commutative 
  crepant resolution.} 
\section{Non-commutative (crepant) resolutions}
\subsection{Generalities}
\label{sec:generalities}
Below $R$ is a normal noetherian domain with quotient field $K$. We denote by $\Refe(R)$ the category of reflexive $R$-modules
and if $\Lambda$ is a reflexive $R$-algebra then $\Refe(\Lambda)$ is the category of $\Lambda$-modules which are
reflexive as $R$-modules.
A
\emph{reflexive Azumaya algebra} \cite{Leb99}~$\Lambda$ is a reflexive $R$-algebra
which is Azumaya in codimension one. A reflexive Azumaya algebra~$\Lambda$ is said to be trivial if it is of the form $\End_R(M)$ for $M$ a
reflexive $R$-module. In that case $\Refe(R)$ and $\Refe(\Lambda)$ are
equivalent. This is a particular case of ``reflexive Morita
equivalence'' which is defined in the obvious way.
\begin{definition} \label{def:NCR}
  A \emph{twisted non-commutative resolution} of $R$  is a reflexive Azumaya
algebra $\Lambda$ over $R$ such that $\gldim \Lambda<\infty$. If $\Lambda$ is trivial then 
$\Lambda$ is said to be a \emph{non-commutative resolution (NCR)} of $R$.
\end{definition}
\begin{definition} \label{ref-4.1-8}
  Assume that $R$ is Gorenstein.
A \emph{twisted non-commutative crepant resolution}~$\Lambda$ of $R$ 
is a twisted NCR of $R$ which is in addition a Cohen-Macaulay $R$-module.
If $\Lambda$ is an NCR then such a $\Lambda$ is said to be a \emph{non-commutative crepant resolution (NCCR)} of $R$.
\end{definition}
The point of these definitions is that they provide
 reasonable non-commutative substitutes for
 ``regularity'', ``birationality'' and ``crepancy''. This is explained
 in more detail in \cite[\S4]{VdB04a}.
\begin{remark} We will sometimes use the concepts introduced in
  Definitions \ref{def:NCR}, \ref{ref-4.1-8} for
  schemes, possibly non-affine. It is then
  understood that they reduce to the affine concepts, when restricting
  to open affine subschemes.
\end{remark}
\begin{remark} In the sequel we will be mostly concerned with NCCRs and thus the other definitions are mainly provided for context. Twisted NCCRs are natural generalizations of NCCRs, but the good properties
  of NCCRs (sometimes conjectural) are usually not shared by twisted NCCRs. See e.g.\ Example \ref{ex:twisted} below.
  The definition of a (twisted) NCR is more tentative.
  In particular the normality and reflexivity hypotheses do not seem very relevant. For example there is a nice theory of non-commutative resolutions of non-normal singularities in dimension one \cite{Leuschke}.
\end{remark}
\begin{example} \label{eq:NCCR}
  It follows from Theorem \ref{th:fibers}  and Theorem \ref{th:kellerkrause}
  that if  there exists a projective crepant resolution of singularities  $\pi:Y\rightarrow X$  such that  
the dimensions 
of the fibers of $\pi$ are $\le 1$ then $R$ has an NCCR.
\end{example}
We mention the following theorem which gives another indication that the definition of an NCCR is the ``correct one''.
\begin{theorem}[\protect{\cite[Theorem 1.5]{IW2}}] \label{th:iw}  Let $f : Y\rightarrow \operatorname{Spec} R$ be a projective birational morphism between
  Gorenstein varieties. Suppose that $Y$ is derived equivalent to some ring $\Lambda$,
then $f$ is a crepant resolution if and only if $\Lambda$ is an NCCR of $R$.
\end{theorem}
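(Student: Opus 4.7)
The plan is to realize the derived equivalence $\Dscr(Y) \simeq \Dscr(\Lambda^\circ)$ by a tilting complex $\Tscr$ on $Y$ with $\End_Y(\Tscr) \cong \Lambda^\circ$, introduce the $R$-module $T := R\pi_\ast \Tscr$ (where $\pi := f$), and then exploit Grothendieck duality
\[
R\Hom_R(R\pi_\ast \Fscr, R) \cong R\pi_\ast R\uHom_Y(\Fscr, \pi^! R).
\]
Since $\pi$ is projective birational of relative dimension zero one has $\pi^! R = \omega_{Y/R}$, so crepancy of $\pi$ is equivalent to $\omega_{Y/R} \cong \Oscr_Y$. As in Theorem \ref{th:fibers}, once $\Lambda$ is known to be reflexive over $R$, the fact that $\pi$ is an isomorphism in codimension one (because $X = \Spec R$ is normal) will identify $\Lambda$ with $\End_R(T)$, and $T$ will inherit the maximal Cohen-Macaulay property from being an $R$-summand of $\Lambda$.

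For the implication that crepancy makes $\Lambda$ an NCCR, smoothness of $Y$ gives $\Dscr(Y) = \Perf(Y)$, which transports across the equivalence to $\gldim \Lambda < \infty$. Crepancy gives $\pi^! R = \Oscr_Y$, and applying Grothendieck duality with $\Fscr = \uEnd(\Tscr)$ yields
\[
R\Hom_R(\Lambda, R) = R\Gamma(Y, \uRHom_Y(\uEnd(\Tscr), \Oscr_Y)) = R\Gamma(Y, \uEnd(\Tscr)) = \Lambda,
\]
where the last equality is the tilting vanishing for $\Tscr$. Thus $\Ext^i_R(\Lambda, R) = 0$ for $i > 0$, so $\Lambda$ is maximal Cohen-Macaulay (and in particular reflexive), completing this direction together with the codimension-one identification $\Lambda = \End_R(T)$ mentioned above.

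For the converse, finite global dimension of $\Lambda$ transports across the equivalence to $\Dscr(Y) = \Perf(Y)$, forcing $Y$ to be regular and therefore smooth in characteristic zero. The remaining task is to deduce $\omega_{Y/R} \cong \Oscr_Y$ from the Cohen-Macaulay property of $\Lambda$. I would reverse the Grothendieck-duality computation via relative Serre functors: $-\otimes \omega_{Y/R}$ is a relative Serre functor on $\Dscr(Y)$ over $\Dscr(R)$, and under the $R$-linear equivalence it corresponds to the relative Serre functor on $\Dscr(\Lambda^\circ)$, which is given by tensoring with the $\Lambda$-bimodule $R\Hom_R(\Lambda, R)$. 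The Cohen-Macaulay hypothesis combined with $\Lambda = \End_R(T)$ for reflexive MCM $T$ forces this bimodule to be $\Lambda$ itself, so the transported functor is the identity; since tensoring by a line bundle on $Y$ acts as the identity on $\Dscr(Y)$ only when the line bundle is trivial, we conclude $\omega_{Y/R} \cong \Oscr_Y$.

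The main obstacle I expect lies in this last step: ensuring the matching of relative Serre functors is strong enough to pin down $\omega_{Y/R}$ as a line bundle on $Y$ rather than merely as an abstract twist. Concretely one must verify $R\Hom_R(\Lambda, R) \cong \Lambda$ as $\Lambda$-bimodules (not just as one-sided modules), which should follow from $\Lambda$ being a \emph{trivial} reflexive Azumaya algebra of the form $\End_R(T)$ with $T$ reflexive MCM over Gorenstein $R$, so that the $R$-dual operation corresponds on the $T$-side to taking the $R$-dual of a reflexive MCM module, and the resulting bimodule structure is the standard one.
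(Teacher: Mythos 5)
You should first note that the survey itself offers no proof of this statement: it is quoted from \cite[Theorem 1.5]{IW2}, so the comparison below is with the argument of that reference. Your architecture is the right one and is essentially theirs: realize the equivalence by a tilting complex $\Tscr$ with $\End_Y(\Tscr)\cong\Lambda$, transport finite global dimension through the intrinsic characterization of perfect complexes, and translate crepancy into the Cohen--Macaulay (indeed symmetric-order) property of $\Lambda$ via Grothendieck duality with $f^!R=\omega_{Y/R}$; for the converse, uniqueness of relative Serre functors together with the bimodule isomorphism $\Hom_R(\End_R(M),R)\cong\End_R(M)$ (reduced trace pairing, perfect in codimension one, plus reflexivity over the normal Gorenstein ring $R$) correctly yields $\omega_{Y/R}\cong\Oscr_Y$ and hence crepancy.

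The genuine gap is in the direction ``crepant resolution $\Rightarrow$ NCCR'', where the definition requires you to exhibit a reflexive module $M$ with $\Lambda\cong\End_R(M)$. Your candidate $T:=Rf_\ast\Tscr$ does not do this as written: a tilting complex need not be a sheaf, and $Rf_\ast$ of a sheaf need not sit in degree $0$ once fibres have dimension $\ge 2$, so $T$ is a complex; $\End_{\Dscr(R)}(T)$ is then not visibly the endomorphism ring of a module, nor obviously reflexive, so the ``reflexive and isomorphic in codimension one'' comparison of Theorem \ref{th:fibers} does not apply to it. Moreover $T=R\Hom_Y(\Oscr_Y,\Tscr)$ is a direct summand of $\Lambda=R\Hom_Y(\Tscr,\Tscr)$ only when $\Oscr_Y\in\add(\Tscr)$, which was arranged by construction in Theorem \ref{th:fibers} but is not available for an arbitrary tilting complex, so ``$T$ is maximal Cohen--Macaulay because it is a summand of $\Lambda$'' fails. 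The repair is to normalize the shift so that $\Tscr\otimes_RK\cong K^{n}$ for the fraction field $K$, and then to use $\Hom_{\Dscr(Y)}(\Tscr,\Tscr[i])=0$ for $i\neq 0$ at each height-one prime $\mathfrak{p}$ of $R$: there $f$ is an isomorphism (normality of $R$) and $R_{\mathfrak{p}}$ is a DVR, over which every complex splits as a sum of shifted modules, and the no-self-extension condition forces $\Tscr_{\mathfrak{p}}$ to be free and concentrated in degree $0$. One then sets $M:=(f_\ast H^0(\Tscr))^{\vee\vee}$ and checks that the globally defined map $\Lambda\rightarrow\End_R(M)$ given by the action on $H^0$ is an isomorphism in codimension one between reflexive modules, hence an isomorphism. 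Without this step you have only shown that $\Lambda$ is maximal Cohen--Macaulay of finite global dimension, not that it has the required form $\End_R(M)$.
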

The following conjecture is a natural extension of Conjecture \ref{con:BO}.
\begin{conjecture}[\protect{\cite[Conjecture 4.6]{VdB32}}]
\label{ref-4.6-10}
 All crepant resolutions of $X$ (commutative as well as non-com\-mutative) are derived equivalent.
\end{conjecture}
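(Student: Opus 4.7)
The plan is to decompose the conjecture into three sub-claims and attack each with a different tool. Writing the assertion as a transitive closure, it suffices to show: (i) any two commutative crepant resolutions of $X$ are derived equivalent; (ii) any commutative crepant resolution $\pi : Y \to X$ is derived equivalent to any NCCR $\Lambda$ of $R$; (iii) any two NCCRs $\Lambda_1$, $\Lambda_2$ of $R$ are derived equivalent. Claim (i) is precisely Conjecture \ref{con:BO}, so one may inherit whatever is known there (Bridgeland in dimension three, Kawamata for the toric case, Kaledin up to \'etale cover in the symplectic case). The real content is (ii) and (iii).

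For (ii), the natural strategy is to produce a tilting bundle $\Tscr$ on $Y$ whose endomorphism ring $\End_Y(\Tscr)$ is Morita equivalent to $\Lambda$. If such a $\Tscr$ exists, then Theorem \ref{th:kellerkrause} yields $\Dscr(Y)\cong \Dscr(\End_Y(\Tscr)^\circ)$, and Theorem \ref{th:iw} guarantees automatically that $\End_Y(\Tscr)$ is an NCCR, reducing (ii) to (iii). Existence of a tilting bundle on $Y$ is known in the fibers-$\le 1$ setting via the ``killing backward Ext'' construction of \S\ref{sec:1dim}, and in toric / GIT situations via variation of stability, but a general existence statement would already settle many subcases on its own.

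For (iii), the most natural framework is \emph{mutation of NCCRs} in the spirit of Iyama--Wemyss: given $\Lambda_i = \End_R(M_i)$ one tries to connect $M_1$ and $M_2$ through a finite chain of mutations $M \rightsquigarrow \mu_i(M)$, each producing a tilting $(\Lambda,\mu_i(\Lambda))$-bimodule and hence a derived equivalence. In dimension three this theory is robust and, combined with the observation that $\Lambda_1^\circ$ and $\Lambda_2$ can be realized from geometric flops (as done for the Atiyah flop and more generally by \cite[\S4.4]{VdB04a}), it already yields (iii). In the projective/GIT setting one has an alternative route: realize both NCCRs as endomorphism rings of tilting bundles pulled back from different chambers of a GIT quotient, and use window-shift functors \cite{HLShipman,BFK,HLSam} to construct the equivalence. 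A third, more conceptual approach is the stringy K\"ahler moduli space picture of Remark \ref{rem:notexplicit}: exhibit all commutative and non-commutative crepant resolutions as fibres of a flat family of categories over a connected base, with equivalences arising from parallel transport along paths.

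The main obstacle is that, unlike the commutative case, there is no analogue of Kawamata's flop-connectedness theorem for NCCRs in dimension $\ge 4$: it is not known whether the mutation graph of NCCRs of a fixed $R$ is connected, nor even that every NCCR is reachable by mutation from one arising geometrically. Consequently each of the three routes sketched for (iii) currently has the same essential gap as Conjecture \ref{con:BO} itself. A complete proof would thus require either (a) a general connectedness theorem for NCCRs together with a derived-equivalence-preserving mutation theory in arbitrary dimension, or (b) an intrinsic construction of the SKMS from $R$ alone, turning the philosophical picture of Remark \ref{rem:notexplicit} into a theorem. In the absence of either, the best one can currently hope for is to verify Conjecture \ref{ref-4.6-10} in classes of examples (quotient singularities, symplectic singularities, those arising from GIT) where one of the above mechanisms is available.
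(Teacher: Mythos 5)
The statement you are asked about is a \emph{conjecture}, and the paper offers no proof of it: the only thing established in the text is the special case of dimension three for projective crepant resolutions (via Proposition \ref{prop:iff} together with Bridgeland's theorem, or alternatively the Iyama--Reiten argument). Your submission is accordingly not a proof but a survey of strategies, and to your credit you say so explicitly in the final paragraph. Judged as such, your decomposition into (i) commutative--commutative, (ii) commutative--noncommutative, and (iii) noncommutative--noncommutative is exactly the implicit structure of the paper's partial results, and your identification of the essential obstruction --- no connectedness theorem for the mutation graph of NCCRs in dimension $\ge 4$, and no canonical choice of equivalence --- matches the paper's own discussion in Remark \ref{rem:notexplicit} and \S\ref{sec:SKMS}.

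One substantive comment on your step (ii): you argue in the direction ``given a crepant resolution $Y$, produce a tilting bundle $\Tscr$ on $Y$ with $\End_Y(\Tscr)$ Morita equivalent to the given $\Lambda$.'' The paper's Proposition \ref{prop:iff} runs in the opposite direction: starting from the NCCR $\Lambda$ one \emph{constructs} a crepant resolution (a moduli space of $\Lambda$-modules, in the style of the BKR McKay correspondence) that is derived equivalent to $\Lambda$. Your direction cannot work in general: Example \ref{ex:dao} exhibits a three-dimensional factorial hypersurface singularity admitting a crepant resolution but no NCCR, so by Theorem \ref{th:iw} that resolution carries no tilting complex at all. Moreover, even when a tilting bundle exists, matching its endomorphism ring with a \emph{prescribed} NCCR $\Lambda$ is precisely problem (iii) in disguise, so your reduction of (ii) to (iii) is only as strong as (iii) itself. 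Finally, be careful that the paper's dimension-three proof also covers the case where $R$ has no NCCR (where the conjecture degenerates to Bridgeland's theorem); your transitive-closure formulation should be read as quantifying over whichever resolutions actually exist.
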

We have the following result which is proved in the same way as the 3-dimensional McKay correspondence \cite{BKR}.
\begin{proposition}[\protect{\cite[Theorem 6.3.1, Proposition 6.2.1]{VdB32}}] \label{prop:iff}
  If $X$ has three-dimensional Gorenstein singularities and it has an NCCR $\Lambda$,
  then   it has a projective crepant resolution $Y\rightarrow X$ such that $\Lambda$ and $Y$ are derived equivalent.
  \end{proposition}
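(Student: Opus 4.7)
The plan is to follow the Bridgeland--King--Reid strategy for the McKay correspondence, realising the desired resolution as a moduli space of representations of $\Lambda$ and then using an intersection-theoretic dimension bound to promote the tautological family to a tilting object.

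First, I would write $\Lambda = \End_R(T)$ with $T = R \oplus T_1 \oplus \cdots \oplus T_n$ a decomposition of $T$ into pairwise non-isomorphic indecomposable reflexive summands, and let $\alpha = (1, \rk T_1, \ldots, \rk T_n)$ be the associated dimension vector; viewed as a $\Lambda$-module via right multiplication, $T$ has dimension vector $\alpha$ at the generic point of $X$. I would then choose a generic stability parameter $\theta$ with $\theta \cdot \alpha = 0$ for which $T\otimes_R K$ is $\theta$-stable, and let $Y$ be the irreducible component of the GIT moduli space $\mathcal{M}_\theta(\Lambda,\alpha)$ containing $[T\otimes_R K]$. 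Standard GIT then gives $Y$ projective over $X=\Spec R$, equipped with a universal family $\mathcal{U}$, and the natural map $\pi:Y\to X$ is birational, being an isomorphism over the smooth locus of $X$ (where $\Lambda$ is reflexive Azumaya and $T$ localises to the unique $\theta$-stable $\Lambda$-module of dimension vector $\alpha$).

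The central technical step, taken over from \cite{BKR}, is the fibre-product dimension bound $\dim (Y\times_X Y)\le \dim X=3$, using $\dim X=3$ together with the finite global dimension of $\Lambda$. Granted this bound, the Bridgeland--Maciocia intersection argument shows that the Fourier--Mukai functor $\Phi=-\Lotimes_{\Lambda}\mathcal{U}:D(\Lambda^\circ)\to D(Y)$ is fully faithful; a standard generation / adjoint argument then upgrades $\Phi$ to an equivalence, equivalently showing that $\mathcal{U}$ is a tilting bundle on $Y$ with $\End_Y(\mathcal{U})\cong \Lambda$. The finite global dimension of $\Lambda$ now forces $Y$ to be smooth, since the structure sheaves of closed points of $Y$ acquire finite $\Lambda$-projective dimension under the equivalence. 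Theorem \ref{th:iw} applies to the projective birational morphism $\pi:Y\to X$ between Gorenstein varieties: being derived equivalent to the NCCR $\Lambda$, it is a crepant resolution.

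The main obstacle is the fibre-product dimension bound, which is the heart of the BKR argument and is the place where three-dimensionality enters in an essential way; once it is established, the remaining steps---full faithfulness, smoothness, and crepancy via Theorem \ref{th:iw}---are essentially formal consequences.
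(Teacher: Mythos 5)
Your proposal is essentially the paper's own proof: Proposition \ref{prop:iff} is proved in \cite[Theorem 6.3.1, Proposition 6.2.1]{VdB32} exactly by realising $Y$ as a moduli space of stable $\Lambda$-representations with the dimension vector given by the generic ranks of the summands of $T$, and then running the Bridgeland--King--Reid intersection-theoretic argument to show the tautological family is a tilting-type kernel, that $Y$ is smooth, and that $\pi$ is crepant. The only slip is the stated fibre-product bound: what the BKR argument requires (and what holds automatically in dimension three) is $\dim(Y\times_X Y)\le \dim X+1=4$, not $\le 3$; with that correction the outline matches the cited proof.
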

\begin{proposition}
Conjecture \ref{ref-4.6-10} is true if $X$ has dimension three, if we restrict to projective crepant resolutions.
\end{proposition}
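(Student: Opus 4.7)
The plan is to reduce the statement to the already established commutative three-dimensional case of Conjecture~\ref{con:BO}, using Proposition~\ref{prop:iff} as a bridge between NCCRs and ordinary (commutative) projective crepant resolutions.

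First I would dispose of the purely commutative case. Given two projective commutative crepant resolutions $\pi_i:Y_i\to X$ with $\dim X=3$, Kawamata's theorem \cite[Theorem 1]{KawamataFlops} (already invoked in \S\ref{sec:flops}) says that $Y_1$ and $Y_2$ are connected by a finite sequence of flops $Y_1=Z_0\dashrightarrow Z_1\dashrightarrow\cdots\dashrightarrow Z_m=Y_2$, each of which is itself a flop between projective crepant resolutions of $X$ (or of an affine open chart thereof). Bridgeland's theorem \cite{Bridgeland}, as discussed in \S\ref{sec:bridgeland}, yields a derived equivalence $\Dscr(Z_{j})\cong \Dscr(Z_{j+1})$ for each consecutive pair. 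Composing these gives $\Dscr(Y_1)\cong \Dscr(Y_2)$, so all projective commutative crepant resolutions of $X$ are pairwise derived equivalent.

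Next I would handle NCCRs. Let $\Lambda$ be any NCCR of $R$ (where $X=\Spec R$, reducing to the affine case by the convention of the paper). By Proposition~\ref{prop:iff}, because $X$ has three-dimensional Gorenstein singularities, there exists a projective crepant resolution $\pi_\Lambda:Y_\Lambda\to X$ and a derived equivalence $\Dscr(Y_\Lambda)\cong \Dscr(\Lambda)$. Combining this with the previous paragraph, for any other projective crepant resolution $Y\to X$ we obtain
\[
\Dscr(\Lambda)\cong \Dscr(Y_\Lambda)\cong \Dscr(Y),
\]
and for any other NCCR $\Lambda'$ with associated resolution $Y_{\Lambda'}$ we obtain
\[
\Dscr(\Lambda)\cong \Dscr(Y_\Lambda)\cong \Dscr(Y_{\Lambda'})\cong \Dscr(\Lambda').
\]
This proves the proposition once we note, as for the earlier reductions to the affine case in \S\ref{sec:1dim}, that the global statement follows by glueing the affine statements along an affine cover of $X$.

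The only real subtlety I foresee is ensuring that all the equivalences produced in this chain are $X$-linear in the sense of Remark~\ref{rem:linearity}; Bridgeland's flop equivalences, the equivalence in Proposition~\ref{prop:iff} (which is of Fourier--Mukai type and comes from a tilting bundle pulled back from $X$), and the equivalences from Theorem~\ref{th:kellerkrause} are all $R$-linear, so composing them preserves $X$-linearity. This is exactly the implicit convention announced at the end of Remark~\ref{rem:linearity}, so no extra work is required beyond checking that each building block respects it, which it does.
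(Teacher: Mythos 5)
Your proposal is correct and follows essentially the same route as the paper: reduce any NCCR to a projective crepant resolution via Proposition~\ref{prop:iff}, and then invoke Bridgeland's flop theorem (together with Kawamata's result that projective crepant resolutions are connected by flops) to handle the commutative--commutative comparisons. The paper states this in two sentences and additionally notes an alternative direct argument for the NCCR--NCCR case via \cite[Corollary 8.8]{IR}, but your more detailed write-up, including the remarks on $X$-linearity, matches the intended argument.
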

\begin{proof} If $X$ has an NCCR $\Lambda$ then by Proposition \ref{prop:iff} $\Lambda$ is derived equivalent to a crepant resolution.  Hence we are reduced to Bridgeland's result (see \S\ref{sec:flops}). Alternatively, to have a very nice direct argument that any two NCCRs are   derived equivalent in dimension three
  we may use \cite[Corollary 8.8]{IR} (see also \cite[Theorem 1.5]{IyamaWemyssNCBO}).
  \end{proof}
Proposition \ref{prop:iff} is false for arbitrary three-dimensional Gorenstein singularities as was shown by Dao \cite{Dao}.
\begin{proposition}[\protect{\cite[Theorem 3.1, Remark 3.2]{Dao}}]  Assume $S$ is a regular local ring which is equicharacteristic or unramified, $0\neq f\in S$ and $R=S/(f)$ is normal. If $\dim R=3$ and $R$ is factorial then $R$ has no NCCR.
\end{proposition}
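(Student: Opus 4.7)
The plan is to argue by contradiction: suppose $\Lambda=\End_R(M)$ is an NCCR of $R$, with $M$ a reflexive $R$-module. Since $\Lambda$ is a maximal Cohen-Macaulay $R$-module by hypothesis and $M$ is a direct summand of $\Lambda$ (as right $\Lambda$-module, equivalently via an idempotent endomorphism), $M$ is itself MCM. The target is to show that $M$ must be free; once this is done, $\Lambda\cong M_n(R)$ is Morita-equivalent to $R$, so $\gldim R=\gldim\Lambda<\infty$ forces $R$ to be regular, contradicting $f\ne 0$.

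The natural tool is Eisenbud's matrix factorization theory. Because $R=S/(f)$ is an admissible hypersurface, every MCM $R$-module admits an eventually $2$-periodic free resolution, so for any pair of MCM modules $M_1,M_2$ whose tensor product has finite length support the Hochster theta invariant
\[
\theta^R(M_1,M_2)=\length\Tor^R_{2i}(M_1,M_2)-\length\Tor^R_{2i+1}(M_1,M_2)
\]
is well-defined and independent of $i\gg 0$. I would then invoke the key input of Dao's paper, namely that for an admissible hypersurface $R$ of dimension $3$, factoriality of $R$ implies $\theta^R\equiv 0$ on MCM$(R)\times$MCM$(R)$ (this is what Dao establishes using the description of the class group in terms of first Chern classes in $K$-theory, where $\theta^R$ computes the pairing on the part of $K_0$ supported at the maximal ideal). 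In our setting this vanishing is therefore free.

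The main step is to convert finite global dimension of $\Lambda=\End_R(M)$ into a positivity statement for $\theta^R$ involving $M$. The approach is to decompose $M=\bigoplus M_i^{a_i}$ into indecomposables; if every $M_i$ has rank $1$, then by factoriality each $M_i\cong R$ and we are done as above. Otherwise there is a non-free indecomposable MCM summand $N$, and using the matrix factorization $(\varphi,\psi)$ presenting $N$ one computes $\theta^R(N,N^*)=\rk(N)^2\cdot e(f)-\length\coker\varphi\cdot\length\coker\psi$, or more invariantly $\theta^R(N,\Omega N)$ is expressed as a product of positive lengths coming from the non-triviality of the periodic part of the resolution. The finite global dimension of $\Lambda$ forces these lengths to be strictly positive (otherwise the cyclic $\Lambda$-module obtained from the relevant matrix factorization would have infinite projective dimension over $\Lambda$), yielding $\theta^R(N,N^*)>0$ and contradicting Dao's vanishing theorem.

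The hardest part is precisely this translation: extracting positivity of a specific $\theta$-value from $\gldim\Lambda<\infty$. The cleanest route goes through the Auslander-Bridger transpose and the equivalence between MCM$(R)$ modulo free summands and the stable category, where the functor $\Hom_R(M,-)$ identifies non-free summands with non-projective simple $\Lambda$-modules. Finite global dimension then imposes that certain stable Ext (and dually Tor) groups cannot simultaneously vanish, forcing $\theta^R$ to detect the obstruction. The remaining steps (checking that the hypersurface is admissible in the technical sense used by Dao, reducing to the local case, and ensuring the finite length support hypothesis for $\theta$ holds) are routine given the standing assumptions.
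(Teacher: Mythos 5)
The survey states this proposition without proof, citing Dao, so the benchmark is Dao's own argument. Your skeleton agrees with his in its first and last steps: introduce Hochster's pairing $\theta^R$, use the fact that factoriality of a $3$-dimensional admissible hypersurface forces $\theta^R\equiv 0$ (via the identification of $\theta^R$ with a pairing on divisor classes), deduce that the reflexive module $M$ with $\Lambda=\End_R(M)$ must be free, and conclude by Morita equivalence that $R$ would be regular. The gap is in the middle step, which is exactly the hard part. First, the computation you propose is not meaningful: for a matrix factorization $(\varphi,\psi)$ of a non-free MCM module $N$ of positive dimension, $\coker\varphi\cong N$ does not have finite length, so ``$\length\coker\varphi\cdot\length\coker\psi$'' is undefined and there is no formula of the shape $\theta^R(N,N^\ast)=\rk(N)^2e(f)-(\cdots)$. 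Second, and more importantly, the logical direction of Dao's argument is the opposite of yours: he does not extract positivity of $\theta^R$ from $\gldim\Lambda<\infty$. Rather, the vanishing $\theta^R(M,M^\ast)=0$ implies that the pair $(M,M^\ast)$ is Tor-rigid (Dao's theorem on decent intersections), and Tor-rigidity combined with the crepancy hypothesis alone --- that $\End_R(M)$ is maximal Cohen--Macaulay --- already forces $M$ to be free, by an Auslander--Huneke--Wiegand style depth and rigidity argument applied to the natural map $M^\ast\otimes_RM\to\End_R(M)$. Finite global dimension enters only at the very end, to rule out $\Lambda\cong M_n(R)$ over a singular $R$. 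Your proposed substitute (``finite global dimension imposes that certain stable Ext groups cannot simultaneously vanish, forcing $\theta^R$ to detect the obstruction'') is a hope rather than an argument; nothing in the proposal actually produces a pair of modules with finite-length Tor modules and nonzero $\theta^R$.

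Two further points you dismiss as routine are not. For $\theta^R(M,N)$ to be defined one needs $\Tor_i^R(M,N)$ to have finite length for $i\gg 0$; this is automatic for isolated singularities but not for a merely normal $3$-fold, whose singular locus may be a curve, and handling this is precisely the content of Dao's Remark 3.2 (his Theorem 3.1 is proved in the isolated case). Also, your justification that $M$ is maximal Cohen--Macaulay (``$M$ is a direct summand of $\Lambda$'') presupposes $R\in\add(M)$, which is not part of the definition of an NCCR used in this survey; establishing that $M$ is MCM, or bypassing the issue, requires a separate depth argument.
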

\begin{example} \label{ex:dao} It turns out that there are 3-dimensional factorial hypersurface singularities that admit a crepant resolution.
  A concrete example
  is given by $R=k[[x_0,x_1,x_2,x_3]]/(x_0^4+x_1^3+x_2^3+x_3^3)$ \cite[Theorem A,B]{Lin}. In particular a crepant resolution of such $R$ does not admit a tilting
  complex by Theorem \ref{th:iw}.
\end{example}
If $X$ is a normal Gorenstein algebraic variety with a crepant resolution then it has rational singularities \cite[Corollary 5.24]{KM}. A similar
result is true for NCCRs. 
\begin{theorem}[\protect{\cite[Theorem 1.1]{stafford2008}}] Let $R$ be a normal finitely generated Gorenstein $k$-algebra. If $R$ has a twisted NCCR then it has rational singularities. 
\end{theorem}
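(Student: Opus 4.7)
The plan is to compare the twisted NCCR $\Lambda$ against a geometric resolution of singularities of $X=\Spec R$ and translate $\Lambda$'s finiteness and Cohen--Macaulay properties into cohomological vanishing for the resolution. By Hironaka, pick a resolution $\pi:Y\rightarrow X$. Since $R$ is Gorenstein and hence Cohen--Macaulay, $R$ has rational singularities if and only if $R^i\pi_\ast\Oscr_Y=0$ for all $i>0$ (the identity $\pi_\ast\Oscr_Y=\Oscr_X$ is automatic from normality).

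The first step lifts $\Lambda$ to $Y$ by setting $\tilde\Lambda:=(\pi^\ast\Lambda)^{\vee\vee}$, the reflexive hull of the pullback as a sheaf of $\Oscr_Y$-algebras. Because $\Lambda$ is Azumaya in codimension one and $Y$ is smooth, purity of the Brauer group on smooth varieties (Auslander--Goldman, extended by Gabber) upgrades $\tilde\Lambda$ to an Azumaya algebra on all of $Y$, in particular locally free of finite rank over $\Oscr_Y$. Since we are in characteristic zero the normalized reduced trace splits the central embedding $\Oscr_Y\hookrightarrow\tilde\Lambda$, so $\Oscr_Y$ is an $\Oscr_Y$-module direct summand of $\tilde\Lambda$. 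Therefore each $R^i\pi_\ast\Oscr_Y$ is a direct summand of $R^i\pi_\ast\tilde\Lambda$, and it suffices to show $R^i\pi_\ast\tilde\Lambda=0$ for $i>0$.

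For the latter I would use Grothendieck duality along $\pi$: since $\pi^!\omega_R\cong\omega_Y$ and $\tilde\Lambda$ is locally free,
\[
  R\Hom_R(R\pi_\ast\tilde\Lambda,\omega_R)\;\cong\;R\pi_\ast(\tilde\Lambda^{\vee}\otimes_{\Oscr_Y}\omega_Y).
\]
Combined with the reflexive identification $\pi_\ast\tilde\Lambda=\Lambda$ (the two sides agree away from a codimension $\ge 2$ locus in $X$, and $\Lambda$ is reflexive) and the MCM hypothesis $\Ext^{>0}_R(\Lambda,\omega_R)=0$, this forces $R\pi_\ast\tilde\Lambda$ to be concentrated in degree zero, which is what is required.

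The main obstacle is executing the comparison above rigorously: controlling the higher direct images demands the finite global dimension of $\Lambda$, which supplies bounded projective resolutions of the relevant $\Lambda$-modules and tames the Ext/Tor spectral sequences needed to propagate the vanishing across the codimension $\ge 2$ discrepancy between $\pi^\ast\Lambda$ and $\tilde\Lambda$. The overall architecture is the non-commutative counterpart of the classical derivation of rational singularities from a crepant commutative resolution, with finite global dimension of $\Lambda$ playing the role of smoothness of $Y$ and the MCM condition playing the role of crepancy.
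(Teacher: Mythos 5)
The survey does not reprove this theorem, it only cites Stafford--Van den Bergh, so I measure your argument against theirs. Your opening move (split $\Oscr_Y$ off an order on $Y$ via the reduced trace and reduce to a vanishing statement for that order) is a sensible shape, but both steps that carry the weight fail. The purity step first: $\Lambda$ is Azumaya on an open $U\subseteq X$ whose complement has codimension $\ge 2$ \emph{in $X$}, but $\pi^{-1}(X\setminus U)$ will in general contain exceptional divisors of $Y$, so $(\pi^\ast\Lambda)^{\vee\vee}$ is only known to be Azumaya outside a \emph{divisor} of $Y$ and purity of the Brauer group says nothing. The Brauer class of $\Lambda\otimes_R k(X)$ genuinely can ramify along exceptional divisors, and controlling that ramification is exactly where the content of the theorem sits: in loc.\ cit.\ one localizes $\Lambda$ at each divisorial valuation $v_E$ of $k(X)$, uses homological homogeneity to see that the resulting order over the DVR $\Oscr_{Y,E}$ is hereditary, and reads off from the structure theory of hereditary orders a bound on the discrepancy of $E$ (roughly $a_E\ge -1+1/e_E>-1$). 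This shows $\Spec R$ is log terminal, hence canonical since it is Gorenstein, and rational singularities then follow from $\pi_\ast\omega_Y=\omega_X$ together with Grauert--Riemenschneider vanishing. Your route bypasses all of this by an inapplicable appeal to purity.

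The duality step is also a non sequitur. Grothendieck duality identifies $\RHom_R(R\pi_\ast\tilde\Lambda,\omega_R)$ with $R\pi_\ast(\tilde\Lambda^{\vee}\otimes_{\Oscr_Y}\omega_Y)$, whereas the hypothesis $\Ext^{>0}_R(\Lambda,\omega_R)=0$ concerns $\RHom_R$ of the single module $\Lambda=H^0(R\pi_\ast\tilde\Lambda)$; you cannot substitute the truncation for the whole complex. To close the argument you would need $R^{>0}\pi_\ast(\tilde\Lambda^{\vee}\otimes\omega_Y)=0$, i.e.\ Grauert--Riemenschneider vanishing for $\omega_Y$ twisted by an arbitrary vector bundle, which is false in general and in any case exactly as hard as what you are trying to prove. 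A telling symptom is that $\gldim\Lambda<\infty$ never does real work in your write-up: without that hypothesis every normal Gorenstein $R$ is its own ``twisted NCCR'' (take $\Lambda=R$), yet the cone over a smooth plane cubic is normal Gorenstein without rational singularities. Any correct proof must make the finite global dimension bite, as the hereditary-order/discrepancy argument of Stafford--Van den Bergh does.
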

The actual result proved in loc.\ cit.\ applies in a more general context and this has been further exploited in 
\cite{IngallsYasuda2,IngallsYasuda} (see also \cite[Corollary 1.7]{MR4158464}).

\begin{remark}
  \label{rem:mma}
  In order to deal with singularities with a singular minimal model Iyama and Wemyss generalize the definition of an NCCR \cite{IW,IW2,Wemyss}
  to certain rings, of possibly infinite global dimension, called \emph{maximal modification algebras} (MMAs).
Remarkably, many of the results about NCCRs extend to MMAs. However in this overview we will restrict ourselves for simplicity to NCCRs.
\end{remark}

\subsection{Relation with crepant categorical resolutions}
\label{sec:ccr}
We conjecture that non-commutative crepant resolutions are examples of ``strongly crepant categorical resolutions'' as introduced by Kuznetsov in \cite{Kuz}.
However we can only prove this in special cases.

\medskip

Let $X$ be an algebraic variety. A \emph{categorical} resolution  \cite{Kuz} of $\Dscr(X)$ is a ``smooth'' triangulated category $\tilde{\Dscr}$  together with
functors
\[
  \pi_\ast:\tilde{\Dscr}\rightarrow \Dscr(X), \qquad \pi^\ast:\Perf(X)\rightarrow \tilde{\Dscr}
  \]
  which are adjoint (i.e. $\Hom_{\tilde{\Dscr}}(\pi^\ast A,B)\cong \Hom_{\Dscr(X)}(A,\pi_\ast B)$ for $A\in \Perf(X)$, $B\in \tilde{\Dscr}$) such that the natural transformation $\id_{\Perf(X)}\rightarrow \pi_\ast\pi^\ast$, obtained by putting $B=\pi^\ast A$, is an isomorphism. This implies in particular
  that $\pi^\ast$ is fully faithful.
  There is some variation possible in the definition of smoothness. For us it means that $\tilde{\Dscr}$ is equivalent to the derived category of perfect modules over a smooth DG-algebra \cite[Definition 2.23]{KKP}.
\begin{remark}
  If $\pi:Y\rightarrow X$ is a resolution of singularities of $X$ then $(\Dscr(Y),R\pi_\ast,L\pi^\ast)$  is a categorical resolution of $\Dscr(X)$ if and only
  if $X$ has rational singularities. Remarkably however, it has been shown in \cite{KuznetsovLunts} that $\Dscr(Y)$ can be suitably enlarged to yield a categorical resolution. On the other hand, this result cannot be extended to more general dg-categories \cite{MR4160877}.
 \end{remark}

  Following \cite{Kuz} we say that a categorical resolution $(\tilde{\Dscr},\pi_\ast,\pi^\ast)$ of $\Dscr(X)$
  is \emph{weakly crepant} if $\pi^\ast$ is both a left and a right adjoint to $\pi_\ast$.
  
  There is also a notion of a \emph{strongly crepant categorical} resolution for which we need the notion of a \emph{relative Serre functor}.
  To define this assume that $X$ is Gorenstein and that $\tilde{\Escr}$ is a smooth triangulated category which is a $\Perf(X)$-module.
  We will denote the action of $A\in \Perf(X)$ on $B\in \tilde{\Escr}$ as $A\otimes_X B$ and we assume that $-\otimes-$ is exact in both arguments. We also assume that
  the functor $\Perf(X)\rightarrow \tilde{\Escr}:A\mapsto  A \otimes_{X}  B$ has a right adjoint $\tilde{\Escr}\r \Dscr(X)$ which we denote by $\uRHom_{\tilde{\Escr}/X}(B,-)$. I.e. for $C\in \tilde{\Escr}$ we have functorial isomorphisms
  \[
    \Hom_{\tilde{\Escr}}(A\otimes_{X} B,C)\cong \Hom_X(A,\uRHom_{\tilde{\Escr}/X}(B,C)).
  \]
  An autoequivalence $S_{\tilde{\Escr}/X}:\tilde{\Escr}\rightarrow \tilde{\Escr}$ is said to be a \emph{relative Serre functor} for $\tilde{\Escr}/X$ if
  there are functorial isomorphisms
  \[
 \uRHom_X(   \uRHom_{\tilde{\Escr}/X}(B,C),\Oscr_X)\cong\uRHom_{\tilde{\Escr}/X}(C,S_{\tilde{\Escr}/X} B)
\]
for $B,C\in \tilde{\Escr}$. We say that $\tilde{\Escr}/X$ is \emph{strongly crepant} if the identity functor $\tilde{\Escr}\rightarrow \tilde{\Escr}$ is a relative Serre functor.

A \emph{strongly crepant categorical resolution} of $X$ is a quadruple  $(\tilde{\Dscr},\pi_\ast,\pi^\ast,\otimes_X)$  such that $(\tilde{\Dscr},\pi_\ast,\pi^\ast)$
is a categorical resolution of $X$, $-\otimes_X-$ is a $\Perf(X)$-module structure on~$\tilde{\Dscr}$ such that $\tilde{\Dscr}/X$ is strongly crepant
and $\pi^\ast$ is
$\otimes_X$-linear. The last condition means that for $A,B\in \Perf(X)$ we have functorial isomorphisms $A\otimes_X \pi^\ast B\cong \pi^\ast(A\otimes_X B)$ satisfying the appropriate compatibilities.

It is shown in \cite[\S3]{Kuz} that a strongly crepant categorical resolution is weakly crepant, and moreover that if $\pi:Y\rightarrow X$ is a crepant resolution in the usual sense
then $(\Dscr(Y),R\pi_\ast,L\pi^\ast, L\pi^\ast(-)\otimes_Y -)$ is a strongly crepant categorical resolution of $X$.

The following easy lemma, which is an extension of \cite[Example 5.3]{Lunts}, shows that, under suitable conditions, rings of the form $\End_R(M)$
form crepant categorical resolutions. If $M$ is an $R$-module then $\add(M)$ is the category spanned by modules which are direct summands of some $M^{\oplus n}$.
\begin{lemmas}
  \label{ref-1.1.2-2} Assume that $X=\Spec R$ is an algebraic variety and let $M$ be a finitely generated  $R$-module such that $\Lambda=\End_R(M)$ has finite global dimension. Then $\Dscr(\Lambda)$ is smooth. Assume in addition that $R\in \add(M)$. Then
\begin{equation}
  \label{eq:catret}
\Perf(R)\rightarrow \Dscr(\Lambda): N\mapsto M\Lotimes_R N
\end{equation}
yields a categorical resolution of singularities of $X$ (since $\Perf(R)\cong \Perf(X)$). Moreover, assuming
furthermore that $R$ is normal Gorenstein:
\begin{enumerate}
\item if  $M$ is maximal Cohen-Macaulay then this categorical resolution is weakly crepant;
\item if $\Lambda$ is an NCCR then this categorical resolution is strongly crepant.
\end{enumerate}
\end{lemmas}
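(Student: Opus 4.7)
The plan is to first exploit the hypothesis $R \in \add(M)$ to express $M$ as a corner module. Choosing a decomposition $M \cong R \oplus M'$ in $\mod(R)$ and writing $\Lambda = \End_R(M)$ in the resulting matrix form, the idempotent $e \in \Lambda$ projecting onto the $R$-summand satisfies $\Lambda e \cong M$ as a left $\Lambda$-module and $e\Lambda e \cong R$. Consequently $M$ is finitely generated projective over $\Lambda$, the natural map $R \to \End_\Lambda(M)$ is an isomorphism, and $\pi_* := \RHom_\Lambda(M, -)$ is exact and identifies with $P \mapsto eP$. Smoothness of $\Dscr(\Lambda) = \Perf(\Lambda)$ (the equality because $\Lambda$ is noetherian of finite global dimension) then follows from $\Lambda$ being finitely generated as a $k$-algebra of finite global dimension, hence homologically smooth as a DG-algebra concentrated in degree zero.

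For the categorical resolution property, $\pi^* := M \Lotimes_R (-)$ and $\pi_*$ are adjoint by standard tensor--Hom, and the unit is an isomorphism on $\Perf(R)$:
\[
\pi_*\pi^*N \;=\; \RHom_\Lambda(M, M \Lotimes_R N) \;=\; \End_\Lambda(M) \Lotimes_R N \;=\; R \Lotimes_R N \;=\; N,
\]
using $M$ projective over $\Lambda$ together with $\End_\Lambda(M) = R$.

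For weak crepancy, assume $R$ is Gorenstein and $M$ is MCM. The aim is that $\pi^*$ is also right adjoint to $\pi_*$. Under the identification $\pi_* = e \cdot (-)$, its right adjoint is $\RHom_R(e\Lambda, -)$ (with $\Lambda$-action from the right multiplication on $e\Lambda$), so the task reduces to the natural isomorphism $\RHom_R(e\Lambda, N) \cong M \Lotimes_R N$ for $N \in \Perf(R)$. The decisive point is that $e\Lambda$ is MCM as an $R$-module: from the matrix form one reads $e\Lambda \cong R \oplus \Hom_R(M', R)$, and since $M'$ is MCM as a summand of $M$, its $R$-dual is MCM over the Gorenstein base. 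Hence $\RHom_R(e\Lambda, R)$ is concentrated in degree zero and equals $R \oplus (M')^{\vee\vee} = M$ by reflexivity of $M'$; the standard identity $\RHom_R(e\Lambda, N) = \RHom_R(e\Lambda, R) \Lotimes_R N$ for $N \in \Perf(R)$ then produces the desired isomorphism, once one checks $\Lambda$-equivariance via the trace pairing $\Lambda e \otimes_R e\Lambda \to e\Lambda e = R$.

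For strong crepancy when $\Lambda$ is an NCCR, the goal is that the identity is a relative Serre functor, i.e.\ $\RHom_R(\RHom_\Lambda(B, C), R) \cong \RHom_\Lambda(C, B)$ naturally. Since $\Dscr(\Lambda) = \Perf(\Lambda)$ is split-generated by $\Lambda$, it suffices to treat $B = C = \Lambda$, reducing to the bimodule isomorphism $\RHom_R(\Lambda, R) \cong \Lambda$. As an NCCR, $\Lambda$ is MCM so the left-hand side is $\Hom_R(\Lambda, R)$; using reflexivity of $M$ and of $\Lambda = \End_R(M)$, one obtains the $\Lambda$-bimodule identification $\Lambda \cong M \otimes_R M^\vee$ (both reflexive, isomorphic on the Azumaya locus). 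Dualizing,
\[
\Hom_R(\Lambda, R) \;\cong\; \Hom_R(M \otimes_R M^\vee, R) \;\cong\; \Hom_R(M, M^{\vee\vee}) \;\cong\; \End_R(M) \;=\; \Lambda,
\]
which closes the argument. I expect the main technical obstacle to be the careful bookkeeping of bimodule structures throughout, in particular verifying that the isomorphisms in the last display and the trace pairing used for weak crepancy are genuinely $\Lambda$-bilinear rather than merely $R$-linear.
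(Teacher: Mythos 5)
The paper does not actually include a proof of this lemma (it is dismissed as ``easy'' and attributed to an extension of \cite[Example 5.3]{Lunts}), so there is nothing to compare line by line; your argument is the standard one and its architecture is sound: $R\in\add(M)$ gives an idempotent $e$ with $\Lambda e\cong M$, $e\Lambda e\cong R$, whence $M$ is $\Lambda$-projective, $\pi_\ast=e(-)$ is exact, and $\pi_\ast\pi^\ast=\id$; weak crepancy comes from $e\Lambda\cong M^\vee$ being MCM with $(M^\vee)^\vee\cong M$; strong crepancy comes from $\Lambda$ being a symmetric $R$-order, i.e.\ $\RHom_R(\Lambda,R)\cong\Lambda$ as bimodules, after which split generation by $\Lambda$ finishes the job. (The cosmetic point that $R\in\add(M)$ only makes $R$ a summand of some $M^{\oplus n}$, not of $M$, is harmless: replace $M$ by $M^{\oplus n}$, which changes $\Lambda$ only up to Morita equivalence.)

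Two of your stated justifications are, however, false as written and need repair. First, ``finitely generated $k$-algebra of finite global dimension $\Rightarrow$ homologically smooth'' is not a valid implication for general noncommutative algebras; what saves you is that $\Lambda$ is \emph{module-finite} over the finitely generated commutative ring $R$, so that (using $\operatorname{char}k=0$) $\Lambda\otimes_k\Lambda^\circ$ is Noetherian of finite global dimension and the finitely generated bimodule $\Lambda$ is therefore perfect over it. Second, the parenthetical ``$\Lambda\cong M\otimes_R M^\vee$, both reflexive'' is wrong: $M\otimes_R M^\vee$ is in general neither reflexive nor isomorphic to $\End_R(M)$ (only its reflexive hull is), so you cannot invoke the codimension-one criterion to identify the modules themselves. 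Fortunately your displayed computation only uses the $R$-\emph{dual}: the natural map $\mu\colon M\otimes_R M^\vee\to\Lambda$ is an isomorphism in codimension one, and $\Hom_R(\mu,R)$ is a map between $R$-duals, which are automatically reflexive over the normal domain $R$; hence $\Hom_R(\Lambda,R)\cong\Hom_R(M\otimes_R M^\vee,R)\cong\Hom_R(M,M^{\vee\vee})\cong\Lambda$ does hold, and together with $\Ext^{>0}_R(\Lambda,R)=0$ (from $\Lambda$ MCM and $R$ Gorenstein) this gives the bimodule isomorphism you need. With these two corrections, and the bimodule bookkeeping you already flag, the proof is complete.
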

Note that if (2) holds then $M$ is maximal Cohen-Macaulay since we have assumed that $R\in \add(M)$.

The hypotheses of Lemma \ref{ref-1.1.2-2} are actually too strong. For example an NCCR is  always a strongly crepant categorical
resolution in dimension $\le 3$. This follows from
Proposition \ref{prop:sufficient} below
which can be proved using the methods of \cite{IR,IyamaWemyssNCBO}.
\begin{proposition} \label{prop:sufficient}
  Assume that $\Lambda=\End_R(M)$ is an NCCR and
$\dim R\le 3$ then
\begin{equation}
  \label{eq:extvanishing}
  \Ext^i_\Lambda(M,M)=0\qquad \text{for $i>0$}.
\end{equation}
\end{proposition}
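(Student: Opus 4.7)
The plan is to combine the Auslander--Buchsbaum formula for the module-finite $R$-algebra $\Lambda$ with a codimension reduction of $\Ext^i_\Lambda(M,M)$ and a final depth-lemma argument, following the methods of \cite{IR, IyamaWemyssNCBO}. First I would localize and assume $(R,\mathfrak m)$ is Gorenstein local of dimension $d \le 3$. Since $\Lambda$ is maximal Cohen--Macaulay and has finite global dimension, the Auslander--Buchsbaum formula for module-finite $R$-algebras reads $\operatorname{pd}_\Lambda N + \depth_R N = d$ for every finitely generated $\Lambda$-module $N$. Applied to $M$, the $S_2$-property of reflexive modules over the normal ring $R$ yields $\depth_R M \ge \min(2,d)$, hence $\operatorname{pd}_\Lambda M \le \max(d-2,0) \le 1$. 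This immediately kills $\Ext^i_\Lambda(M,M)$ for $i \ge 2$, and in dimensions $d \le 2$ (where reflexive is already maximal Cohen--Macaulay) it makes $M$ projective, killing $\Ext^{>0}_\Lambda(M,M)$ outright; only $\Ext^1_\Lambda(M,M)$ in dimension $d=3$ remains.

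Next I would shrink the support of $E := \Ext^1_\Lambda(M,M)$. On the regular locus of $R$ the NCCR $\Lambda$ is Azumaya, so Morita-equivalent to $R$, and $E$ vanishes there; hence $\supp(E)$ has codimension $\ge 2$. Localising at a height-$2$ prime $\mathfrak p$, $R_\mathfrak p$ is $2$-dimensional Gorenstein and $\Lambda_\mathfrak p$ is an NCCR of $R_\mathfrak p$; since reflexive modules over $R_\mathfrak p$ are maximal Cohen--Macaulay, the dimension-$2$ case of the previous paragraph gives $E_\mathfrak p = 0$. Thus $E$ is supported only at $\mathfrak m$, hence is of finite length.

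To conclude I would estimate $\depth_R E$. Using the identification $\End_\Lambda(M) = Z(\Lambda) = R$, a consequence of $R$ being normal together with $\Lambda$ being Azumaya in codimension~$1$, the projective resolution $0 \to P_1 \to P_0 \to M \to 0$ gives, upon applying $\Hom_\Lambda(-,M)$, the four-term exact sequence
\[
0 \to R \to \Hom_\Lambda(P_0,M) \to \Hom_\Lambda(P_1,M) \to E \to 0.
\]
Each $\Hom_\Lambda(P_i,M)$ is a direct summand of some $M^{n_i}$, hence reflexive and of depth $\ge 2$. Splitting the four-term sequence into two short exact sequences and iterating the depth lemma yields $\depth_R E \ge 1$. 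But a non-zero finite-length module over a local ring has depth~$0$; combined with the previous paragraph this forces $E=0$.

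The main obstacle is not any single ingredient but their precise cooperation: the noncommutative Auslander--Buchsbaum formula, the identification of $\End_\Lambda(M)$ with $R$ globally (not merely generically), and especially the final depth estimate which barely overshoots~$0$. The narrowness of this last margin is exactly why the argument is restricted to $\dim R \le 3$ and why the result draws on the refined machinery of \cite{IR, IyamaWemyssNCBO}.
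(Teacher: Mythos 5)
Your argument is correct and is essentially the paper's own proof: reduce to the local $3$-dimensional case, use the noncommutative Auslander--Buchsbaum formula to get $\operatorname{pd}_\Lambda M\le 1$ and projectivity of $M$ over $\Lambda$ in codimension $\le 2$, form the four-term sequence $0\to R\to \Hom_\Lambda(P_0,M)\to\Hom_\Lambda(P_1,M)\to \Ext^1_\Lambda(M,M)\to 0$, and derive a depth contradiction (you merely make the depth-lemma bookkeeping explicit where the paper leaves it to the reader). One small caveat: your justification that $\supp E$ has codimension $\ge 2$ via ``$\Lambda$ is Azumaya on the regular locus'' is imprecise --- a reflexive module over a $3$-dimensional regular local ring need not be free --- but this is harmless, since your own localization argument at primes of height $\le 2$ already yields the needed vanishing there.
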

\begin{proof} For the benefit of the reader we give a proof. 
We may assume that $R$ is local of dimension 3 (the case
$\dim\le 2$ is easy). By the Auslander Buchsbaum formula \cite[Proposition 2.3]{IR} $\Lambda$ has global dimension $3$.
Since $M$ is reflexive, it has depth $\ge 2$, and hence, again by the Auslander-Buchsbaum formula, it has projective dimension $\le 1$ over $\Lambda$ and moreover
it is projective over $\Lambda$ in codimension $2$.

Hence we
have a projective resolution of $M$ as $\Lambda$-module:
\[
  0\rightarrow P_1\rightarrow P_0\rightarrow M\rightarrow 0.
\]
Applying $\Hom_A(-,M)$ we get a long exact sequence of $R$-modules
\begin{equation}
  \label{eq:depth}
  0\rightarrow R \rightarrow \Hom_\Lambda(P_0,M)\rightarrow \Hom_\Lambda(P_1,M)\rightarrow \Ext^1_\Lambda(M,M)\rightarrow 0.
\end{equation}
Assume $\Ext^1_\Lambda(M,M)\neq 0$.
Since $M$ is projective over $\Lambda$ in codimension two, $\Ext^1_\Lambda(M,M)$ is finite
dimensional and hence it has depth 0 as $R$-module. On the other hand, since $\Hom_\Lambda(P_i,M)$ is reflexive as $R$-module, it has $\depth \ge 2$.
Finally~$R$ being maximal Cohen-Macaulay has depth 3. One may verify that these depth restrictions  are incompatible with
\eqref{eq:depth}.
\end{proof}
It seems too much to hope for that \eqref{eq:extvanishing} would always be true, but lack of time has prevented us from seriously looking for a counterexample.
On the other hand we are sufficiently optimistic to make the following conjecture.
\begin{conjecture} If $X=\Spec R$ is a normal algebraic variety with Gorenstein singularities then an NCCR of $R$ always yields a strongly crepant categorical resolution of $X$.
\end{conjecture}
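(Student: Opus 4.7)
My plan is to sidestep Proposition \ref{prop:sufficient} entirely and establish strong crepancy directly via a canonical symmetry that every NCCR enjoys over its centre. The heart of the argument is a ``symmetric order'' lemma: for any NCCR $\Lambda = \End_R(M)$ there is a canonical isomorphism of $\Lambda$-bimodules
\[
\Hom_R(\Lambda, R) \cong \Lambda.
\]
To prove this, observe that both sides are reflexive $R$-modules (the left because the $R$-dual of a reflexive module is reflexive, the right by hypothesis), so it suffices to produce a bimodule isomorphism in codimension one. At a height-one prime $p$, normality of $R$ forces $R_p$ to be a DVR, hence the reflexive module $M_p$ is free of some rank $n$ and $\Lambda_p \cong M_n(R_p)$; the reduced trace pairing yields the required bimodule isomorphism at $p$, and reflexivity then lifts it globally.

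With this symmetry in hand the relative Serre functor becomes a formal calculation. Since $\Lambda$ is maximal Cohen-Macaulay over Gorenstein $R$ we have $\Ext^{>0}_R(\Lambda, R) = 0$, so the symmetry upgrades to a derived isomorphism $\RHom_R(\Lambda, R) \cong \Lambda$ of $\Lambda$-bimodules. Because $\gldim \Lambda < \infty$ one has $\Dscr(\Lambda) = \Perf(\Lambda)$, and iterating tensor-hom adjunctions over $\Lambda$ and $R$ together with the symmetry and biduality of perfect complexes gives, for $B, C \in \Dscr(\Lambda)$,
\[
\RHom_R(\RHom_\Lambda(B, C), R) \cong \RHom_\Lambda(C, B).
\]
This is precisely the relative Serre functor identity with $S_{\Dscr(\Lambda)/X} = \id$, i.e.\ strong crepancy.

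What remains is to produce the categorical resolution data $(\pi^\ast, \pi_\ast)$ itself, and this is where I expect the main obstacle to lie. Lemma \ref{ref-1.1.2-2} supplies the structure with $\pi^\ast = M \Lotimes_R -$, but only under the hypothesis $R \in \add(M)$, which a generic NCCR need not satisfy. Replacing $M$ by $M' = M \oplus R$ yields $\Lambda' = \End_R(M')$ with $R \in \add(M')$, but $\gldim \Lambda'$ may become infinite, so this replacement alone is insufficient. In dimension at most three this difficulty is finessed by the depth-theoretic Ext vanishing of Proposition \ref{prop:sufficient}; for the general conjecture one would need either a higher-dimensional analogue of that vanishing (which the paper itself regards as unlikely) or, more satisfyingly, a direct bimodule-theoretic construction of $\pi^\ast$ and $\pi_\ast$ bypassing such Ext vanishing. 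Once the categorical resolution data is secured, the crepancy calculation above completes the proof.
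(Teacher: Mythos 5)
This statement is presented in the paper as an open \emph{conjecture}; the paper offers no proof, only the remark that one would need to construct a partial tilting complex $P^\bullet$ of $\Lambda$-modules with $\RHom_\Lambda(P^\bullet,P^\bullet)=R$. Your proposal does not close the conjecture either, and the gap is exactly where you suspect it is. The part of your argument that works --- the symmetric-order isomorphism $\Hom_R(\Lambda,R)\cong\Lambda$ of bimodules (reduced trace pairing in codimension one, then reflexivity on both sides), its derived upgrade via $\Ext^{>0}_R(\Lambda,R)=0$ for $\Lambda$ maximal Cohen--Macaulay over Gorenstein $R$, and the resulting triviality of the relative Serre functor on $\Perf(\Lambda)=\Dscr(\Lambda)$ --- is correct, but it only establishes that $\Dscr(\Lambda)/X$ is \emph{strongly crepant}. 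This is precisely the content of Remark \ref{rem:strongly}, which notes that even a twisted NCCR is strongly crepant over $\Spec R$ while failing in general to be a categorical \emph{resolution}. So your ``heart of the argument'' reproves the half of the conjecture that the paper already regards as known, and the entire difficulty sits in the half you defer.

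Concretely, the missing step is the unit isomorphism $\id_{\Perf(X)}\to\pi_\ast\pi^\ast$ for $\pi^\ast=M\Lotimes_R-$, which amounts to $\End_\Lambda(M)=R$ together with $\Ext^{>0}_\Lambda(M,M)=0$, i.e.\ \eqref{eq:extvanishing} in arbitrary dimension --- exactly what Proposition \ref{prop:sufficient} proves only for $\dim R\le 3$ and what the paper says is probably ``too much to hope for'' in general. Your diagnosis of the $R\in\add(M)$ issue is accurate (when $R\in\add(M)$, $M=\Lambda e$ is a projective $\Lambda$-module and the vanishing is automatic, which is why Lemma \ref{ref-1.1.2-2} imposes that hypothesis; adjoining $R$ to $M$ can destroy finite global dimension), but identifying the obstacle is not the same as overcoming it: no ``direct bimodule-theoretic construction of $\pi^\ast$ and $\pi_\ast$'' is actually supplied, and nothing in the bimodule symmetry $\RHom_R(\Lambda,R)\cong\Lambda$ produces the required partial tilting object. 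As written, the proposal is an honest reduction of the conjecture to its known hard core, not a proof.
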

To prove this conjecture one would have to construct for an NCCR $\Lambda$ of $R$ a partial tiling complex $P^\bullet$ of $\Lambda$-modules such that
$\RHom_\Lambda(P^\bullet,P^\bullet)=R$.
\begin{remark} \label{rem:strongly} The strongly crepantness of $\Escr/X$ as defined above is independent of the resolution property. One may check that if $\Lambda/R$
  is a twisted NCCR then $\Dscr(\Lambda)$ is strongly crepant over $\Spec R$. But one may check that it is not a categorical resolution.
\end{remark}
\section{Constructions of non-commutative crepant resolutions}
\subsection{Quotient singularities} \label{ssec:qs}
Here we will restrict ourselves to quotient singularities for finite groups.
Quotient singularities for (non-finite) reductive groups will be covered in \S\ref{sec:quotient}.

If $G$ is a finite group and $W$ is a faithful finite dimensional unimodular (i.e.\ $\det W=k$) representation of $G$ then the skew group ring
$\Sym(W)\# G=\End_{\Sym(W)^G}(\Sym(W))$ is
an NCCR for $R=\Sym(W)^G$ (which is Gorenstein because of the unimodularity hypothesis).

In dimension $\le 3$ such quotient singularities always have a crepant resolution by the celebrated BKR-theorem \cite{BKR}. In higher dimension
this is not so. The simplest counterexample is given by $\ZZ_2$ acting with weights $(-1,-1,-1,-1)$ on $W=k^4$ because in that case
$R$ is $\QQ$-factorial and terminal. See e.g.\ \cite{MOdiscussion}.
\subsection{Crepant resolutions with tilting complexes}
\label{ssec:tilting}
In case $R$ is a normal Gorenstein domain and $Y\rightarrow \Spec R$ is a crepant resolution and $\Tscr$ is a tilting complex on $Y$
then $\uEnd_Y(\Tscr)$ is an NCCR of $R$ by Theorem \ref{th:iw}. Conversely, assuming a crepant resolution exists, any NCCR has to be of this form if we accept Conjecture \ref{ref-4.6-10} ($\Tscr$ is the dual of the image of $\Lambda$ under the asserted derived equivalence $\Dscr(\Lambda)\cong \Dscr(Y)$).

This is a very general method for constructing NCCRs. Note however that even in dimension three there may be crepant resolutions without tilting complex.
See Example \ref{ex:dao}. Furthermore as indicated in \S\ref{ssec:qs} there are normal Gorenstein singularities that admit an NCCR but not a crepant resolution.

\begin{example}
  \label{ex:textbook}
  A textbook example where this method works very well is the case of determinantal varieties \cite{BLVdB, VdB100}.
  Let $n\ge 1$ and $0\le l< n$.
Let $X_{l,n}=\Spec R$ be the varieties of matrices in $\Hom_k(k^n,k^n)=M_{n\times n}(k)$ which have rank $\le l$.
It is a classical result that~$X$ is Gorenstein. It is also well known that $X$ has a crepant Springer type resolution given by
\[
  Y=\{(\phi,V)\mid V\in \Gr(l,n), \phi\in \Hom_k(k^n,V)\}
  \]
  where $\pi:Y\rightarrow X$ sends $(\phi,V)$ to the composition of $\phi$ with the inclusion $V\hookrightarrow k^n$. If $\Rscr$ denotes the universal subbundle
  on $\Gr(l,n)$ then $Y$ is the vector bundle $\Hom(k^n,\Rscr)$ (i.e.\ $Y=\underline{\Spec} \Sym((\Rscr^{\oplus n})^\vee)$). Using Bott's theorem one computes that the Kapranov tilting bundle on $\Gr(l,n)$ \cite{Kapranov3}
  (see also \cite{BLV1000}\cite{MR3558208} for the case of finite characteristic), pulls back to a tilting bundle on $Y$, which then gives an NCCR of $R$.
  For other approaches to this example see \cite{SegalDonovan} and Theorem \ref{ref-1.4.3-14} below. 
\end{example}
Alas things are often more complicated. For determinantal varieties associated to symmetric of skew-symmetric matrices,
the Springer type resolutions  are not crepant so a tilting bundle on them only gives an NCR (see \cite{WeymanZhao}). NCCRs of such generalized determinantal
varieties will be obtained in \S\ref{sec:quotient} using a different approach.
\begin{example}
 Another beautiful, and much deeper  example \cite{Bez,BMR} is given by cotangent bundles of (partial) flag
 varieties $T^\ast(G/P)$. If $P$ is a Borel subgroup of $G$ then this is a crepant resolution of the nilpotent cone in $\Lie(G)$.
In general 
 they are crepant resolutions of closures of Richardson orbits \cite{NamikawaRichardson}.  It is shown in \cite{Bez,BMR} that
 $T^\ast(G/P)$ has a tilting bundle but it is not obtained as the pullback of a tilting bundle on $G/P$. In fact the construction of the tilting bundle
 is highly non-trivial. 
 To explain the construction it is useful to exhibit
 a slightly different point of view on tilting bundles.

 Let $Y$ be a noetherian scheme. If $\Ascr$ is a quasi-coherent sheaf of algebras on $Y$ 
  and $A=\Gamma(Y,\Ascr)$ then we say that $\Ascr$ is \emph{derived affine}
  if $A=R\Gamma(Y,\Ascr)$ and the right orthogonal to $\Ascr$ in
  $D_{\Qch}(\Ascr)$ is zero. In that case
  $
    R\Gamma(Y,-)
   $ defines an equivalence of categories between $D_{\Qch}(\Ascr)$ and $D(A)$.
  It is not difficult to see that
  a vector bundle $\Tscr$ on $Y$ which has everywhere non-zero rank is
  a tilting bundle provided  $\uEnd_Y(\Tscr)$ is derived affine.\footnote{To verify this it is useful to observe that $\Tscr$ is tilting if and only
    if $\Tscr^\vee$ is tilting. The only non-trivial part is the generation property. To this end one may use that $\Tscr$ generates $D_{\Qch}(Y)$ if and only
  if $\Perf(Y)$ is the smallest \'epaisse subcategory of $D_{\Qch}(Y)$ containing $\Tscr$ \cite[Lemma 2.2]{Neeman4}, together with the fact that $(-)^\vee$ is an autoequivalence of $\Perf(Y)$.} 

\medskip

  We say that $Y$ is derived $\Dscr$-affine if $\Dscr_Y$ is derived affine
  where $\Dscr_Y$ is the sheaf of differential operators on $Y$. In characteristic
  $>0$ we mean by $\Dscr_Y$ the sheaf of \emph{crystalline} differential
  operators, i.e.\ differential operators which may be expressed in terms
  of derivations, without using divided powers.

  \medskip

  Now let $Z=G/P$. The Bernstein Beilinson theorem \cite{BB}, valid in characteristic zero, states that $Z$ is even ``$\Dscr$-affine''
  meaning that the equivalence $R\Gamma(Z,-)$ is also compatible with the natural t-structures. This is
  false in characteristic $>0$. However $Z$ is still derived~$\Dscr$-affine \cite[Theorem 3.2]{BMR} whenever $p$ is strictly bigger than the Coxeter number, which we will assume now.

  We will give a rough sketch how this is used in \cite{Bez,BMR} to construct a tilting bundle on $Y=T^\ast Z$.
  Let us first assume that the characteristic of $k$ is $p>0$. To indicate this we will adorn our notations with $(-)_p$.
  In that case $\Dscr_{Z_p}$ is coherent as a module over its center which is equal to $(\operatorname{Sym}_{Z_p}\Escr_p)^{(1)}$ where $(-)^{(1)}$ denotes
  the Frobenius twist, and $\Escr_p$ is the tangent bundle on $Z_p$. Hence we may view $\Dscr_{Z_p}$ as a sheaf of
  coherent algebras
  $\tilde{\Dscr}$ on $\underline{\operatorname{Spec}}(\operatorname{Sym}_{Z_p}\Escr_p)^{(1)}=Y_p^{(1)}$ where $Y_p=T^*Z_p$. The sheaf $\tilde{\Dscr}$ is still derived
  affine.

  Now $\tilde{\Dscr}$ is not of the form $\uEnd_{Y^{(1)}_p}(\Tscr^{(1)}_p)$. However if we let $\hat{Y}_p$ be the formal completion of $Y_p$ at the zero section 
then  it turns out that the restriction $\hat{\Dscr}$   of $\tilde{\Dscr}_p$ to
  to $\hat{Y}^{(1)}_p$ is of the form $\uEnd_{\hat{Y}_p}(\hat{\Tscr}_p)^{(1)}$ for a vector bundle $\hat{\Tscr}_p$ on $\hat{Y}_p$. Moreover $\hat{\Dscr}$
  is still derived affine and so $\hat{\Tscr}_p$ is a tilting bundle on $\hat{Y}_p$.
  Then one uses the rigidity\footnote{Tilting bundles have in particular vanishing $\Ext^{1,2}$.
    Hence by classical deformation theory they are unobstructed and rigid. }
    of tilting bundles
  to lift $\hat{\Tscr}_p$ to a tilting bundle $\hat{\Tscr}$ in characteristic zero. Finally one may use
  the fact that $Y=T^\ast Z$ (as a vector bundle) admits a nice $G_m$ action to conclude by \cite[Theorem 1.8]{Kaledin}
  that $\hat{\Tscr}$ is actually the completion of a tilting bundle $\Tscr$ on
  $Y$.
\end{example}
  Hidden behind this construction is the fact that $\Dscr_Z$ is in some sense a canonical non-commutative deformation of the symplectic variety $T^\ast Z$.
  If $Y$ is a general symplectic variety then one may try to construct a non-commutative deformation using Fedosov quantization. This general idea
  has been used by Bezrukavnikov and Kaledin to prove an analogue of the BKR theorem \cite{BKR} for crepant resolutions of symplectic quotient singularities \cite{BezKal} and by Kaledin
  to prove a suitable version of Conjecture \ref{con:BO} \cite{Kaledin} for general symplectic singularities. To apply the method one needs to be able to do Fedosov quantization in finite
  characteristic, a problem which has been solved to some extent in \cite{BezKal2}.

  \subsection{Resolutions with partial tilting complexes}
  \label{ssec:partial}
  Assume $R$ is a normal Gorenstein domain with rational singularities and $Y\rightarrow \Spec R$ is a
  resolution which is not crepant. A strengthening of Conjecture \ref{ref-4.6-10} inspired by \cite{Kuz} is that NCCRs are minimal in a categorical sense, i.e.\ their derived category embeds
  inside~$\Dscr(Y)$.  This means that they are obtained 
   as $\uEnd_Y(\Tscr)$ for a
  partial tilting complex $\Tscr$ on $Y$. For a very general
  result in this direction see \cite[Theorem 2]{Kuz}.  We will
  restrict ourselves to a special case which will be useful in
  \S\ref{sec:stringy} and which can be easily proved directly.

\begin{proposition}[\protect{\cite{Kuz}}]   \label{prop:Kuz}
Let $Z$ be a smooth projective variety with ample line bundle $\Oscr_Z(1)$ and let $X=\Spec R$ be the corresponding cone.
Assume $\omega_Z=\Oscr_Z(-n)$ for $n\ge 1$.  Then $R$ is Gorenstein. Moreover a resolution of singularities $\pi:Y\rightarrow X$ of $X$ is given by
the line bundle over $Z$ associated to $\Oscr_Z(1)$.
Assume $\Escr\in \Dscr(Z)$ is such that:
\begin{enumerate}
\item \label{it:van1} $\Ext^i_Z(\Escr,\Escr(m))=0$ for $i>0$ and $m\ge 0$;
\item \label{it:van2} $\Ext^i_Z(\Escr,\Escr(m))=0$ for $i\ge 0$ and $m\in \{-1,\ldots, -n+1\}$;
\item \label{it:van3} $\Escr\oplus \Escr(1)\oplus \ldots \oplus \Escr(n-1)$ is a generator for $D_{\Qch}(Z)$.
\end{enumerate}
Let $\gamma:Y\rightarrow Z$ be the projection map and put $\Tscr=\gamma^\ast \Escr$. Then $\End_Y(\Tscr)$ is an NCCR of $R$.
\end{proposition}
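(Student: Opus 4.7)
Plan. My strategy is to verify that $\Tscr = \gamma^\ast\Escr$ is a partial tilting complex on $Y$ with an explicit description of $\Lambda = \End_Y(\Tscr)$, then realize $\Lambda$ as a corner of the tilting algebra $\End_Y(\Tscr')$ for the enlarged complex $\Tscr' = \bigoplus_{k=0}^{n-1}\gamma^\ast\Escr(k)$ (whose finite global dimension comes from Theorem~\ref{th:kellerkrause}), and finally verify Cohen-Macaulayness and reflexivity so that $\Lambda$ is an NCCR of $R$.

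First, $R$ is Gorenstein because $Z$ is Fano: the canonical module $\omega_R = \bigoplus_{m\geq 0}H^0(Z,\omega_Z(m))$ equals $R(-n)$ using $\omega_Z\cong\Oscr_Z(-n)$, hence is isomorphic to $R$ as an ungraded module. The map $\pi:Y\to X$ is a resolution since $Y$ is smooth (a line bundle over smooth $Z$) and contracts the zero section to the cone point. Since $\gamma$ is affine with $R\gamma_\ast\Oscr_Y=\bigoplus_{m\geq 0}\Oscr_Z(m)$, adjunction and the projection formula yield
\[
R\End_Y(\Tscr)\cong\bigoplus_{m\geq 0}R\Hom_Z(\Escr,\Escr(m)),
\]
which is concentrated in degree zero by \eqref{it:van1}. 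So $\Tscr$ is partial tilting and $\Lambda=\bigoplus_{m\geq 0}\Hom_Z(\Escr,\Escr(m))$ as a graded $R$-algebra.

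The main obstacle is finite global dimension of $\Lambda$. Note that $\Tscr$ alone is \emph{not} a generator of $D_{\Qch}(Y)$: if $\iota:Z\hookrightarrow Y$ is the zero section, then $R\Hom_Y(\Tscr,\iota_\ast\Escr(-k))\cong R\Hom_Z(\Escr,\Escr(-k))=0$ for $k\in\{1,\ldots,n-1\}$ by \eqref{it:van2}. I therefore pass to $\Tscr'=\bigoplus_{k=0}^{n-1}\gamma^\ast\Escr(k)$. A parallel computation using \eqref{it:van1} and \eqref{it:van2} shows that $\Tscr'$ is partial tilting, and \eqref{it:van3} together with conservativity of the affine pushforward $\gamma_\ast$ shows that $\Tscr'$ generates $D_{\Qch}(Y)$. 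Hence $\Tscr'$ is a full tilting bundle on smooth $Y$, so $\Lambda'=\End_Y(\Tscr')$ has finite global dimension by Theorem~\ref{th:kellerkrause}. Writing $\Lambda=e_0\Lambda'e_0$ for the idempotent projecting onto the $\gamma^\ast\Escr$-summand, transferring finite global dimension to this corner is the technical heart of the argument: the Koszul-type short exact sequences
\[
0\to\gamma^\ast\Escr(k+1)\to\gamma^\ast\Escr(k)\to\iota_\ast\Escr(k)\to 0
\]
on $Y$, together with \eqref{it:van2}, furnish the resolutions needed to control the projective dimensions of $\Lambda$-modules.

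For the remaining NCCR conditions, Cohen-Macaulayness of $\Lambda$ over $R$ follows from the local cohomology formula $H^{i+1}_{\mathfrak{m}}(\Lambda)\cong\bigoplus_m\Ext^i_Z(\Escr,\Escr(m))$ for $i\geq 1$: the vanishing for $1\leq i<\dim Z$ follows from \eqref{it:van1} when $m\geq 0$, from \eqref{it:van2} when $-n+1\leq m\leq -1$, and from Serre duality combined with $\omega_Z\cong\Oscr_Z(-n)$ and \eqref{it:van1} when $m\leq -n$. Setting $M=\Gamma(Y,\Tscr)$, the natural map $\Lambda\to\End_R(M)$ is an isomorphism by the codimension-one reflexivity argument of Theorem~\ref{th:fibers}. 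Hence $\Lambda$ is an NCCR of $R$.
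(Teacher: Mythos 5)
Your overall architecture matches the paper's: compute $\RHom_Y(\gamma^\ast\Escr(a),\gamma^\ast\Escr(b))=\bigoplus_{l\ge 0}\RHom_Z(\Escr,\Escr(b-a+l))$, deduce from (\ref{it:van1},\ref{it:van2}) that the enlarged object $\Tscr'=\bigoplus_{k=0}^{n-1}\gamma^\ast\Escr(k)$ is partial tilting and from \eqref{it:van3} that it generates, conclude $\gldim\End_Y(\Tscr')<\infty$ by Theorem \ref{th:kellerkrause}, and then pass to the corner $\Lambda=e_0\End_Y(\Tscr')e_0$. Your Cohen--Macaulayness argument via $H^{i+1}_{\mathfrak m}(\Lambda)\cong\bigoplus_m\Ext^i_Z(\Escr,\Escr(m))$ and Serre duality is a legitimate variant of the paper's, which instead computes $\Ext^{>0}_R(\End_Y(\Tscr),\omega_R)=\Ext^{>0}_Y(\Tscr,\Tscr(-n+1))=0$ by Grothendieck duality and $\omega_Y=\gamma^\ast(\omega_Z(1))$; the two are related by local duality, and yours additionally needs the (true, but unstated) vanishing of $H^{0}_{\mathfrak m}$ and $H^1_{\mathfrak m}$, i.e.\ that $\Lambda$ is the full saturated section module $\bigoplus_{m\in\ZZ}\Hom_Z(\Escr,\Escr(m))$.

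The genuine gap is exactly at the step you yourself call ``the technical heart'': you never prove that the corner algebra $e_0\Lambda' e_0$ inherits finite global dimension. This cannot be waved through, because a corner $eAe$ of an algebra $A$ of finite global dimension has, in general, \emph{infinite} global dimension, so some special feature of this situation must be exploited. Your one-sentence appeal to the sequences $0\to\gamma^\ast\Escr(k+1)\to\gamma^\ast\Escr(k)\to\iota_\ast\Escr(k)\to 0$ does not explain how exact sequences of sheaves on $Y$ produce finite projective resolutions of arbitrary $\Lambda$-modules; the objects $\iota_\ast\Escr(k)$ do not lie in $\add(\Tscr)$, and the functor $e_0(-)$ does not send projective $\Lambda'$-modules to projective $\Lambda$-modules, so the obvious transfers fail. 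The paper closes this gap with a purely algebraic device using the grading $\Lambda=\bigoplus_{m\ge0}\Lambda_m$: it identifies $\bar\Lambda=\End_Y(\Tscr')$ with the subring of $\Gamma:=M_n(\Lambda)$ whose $(i,j)$-entry is $\Lambda_{\ge j-i}$, checks that $\Gamma$ is projective as a left and right $\bar\Lambda$-module and that the multiplication $\Gamma\otimes_{\bar\Lambda}\Gamma\to\Gamma$ is an isomorphism, and then transports a finite $\bar\Lambda$-projective resolution of any $\Gamma$-module $M$ to a finite $\Gamma$-projective resolution via $\Gamma\otimes_{\bar\Lambda}(-)$, whence $\gldim\Gamma<\infty$ and, by Morita equivalence, $\gldim\Lambda<\infty$. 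You would need to supply this (or an equivalent argument, e.g.\ that $\REnd_Y(\Tscr)$ is a smooth DG algebra because $Y$ is smooth and $\Tscr$ is perfect) for the proof to be complete. A small additional slip: $\Escr$ is only assumed to be an object of $\Dscr(Z)$, so $\Tscr'$ is a tilting \emph{complex}, not a bundle, and $\Gamma(Y,\Tscr)$ together with the codimension-one reflexivity argument needs a word of justification in that generality.
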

\begin{proof} We write $\Tscr(m)=\gamma^\ast(\Escr(m))$. Then we have
  \begin{equation}
    \label{eq:homcalc}
    \RHom_Y(\Tscr,\Tscr(m))=\bigoplus_{l\ge 0} \RHom_Z(\Escr,\Escr(m+l)).
  \end{equation}
  Using (\ref{it:van1},\ref{it:van2}) we deduce in particular that $\bar{\Tscr}:=\Tscr\oplus \cdots\oplus \Tscr(-n+1)$ is partial tilting
  (and hence this is also the case for $\Tscr$). Furthermore from \eqref{it:van3} we obtain $\bar{\Tscr}^\perp =0$. So
  $\bar{\Tscr}$ is in fact tilting. Put $\Lambda=\End_Y(\Tscr)$, $\bar{\Lambda}=\End_Y(\bar{\Tscr})$. By Theorem \ref{th:kellerkrause} $\bar{\Lambda}$
  has finite global dimension.
  
  Via the decomposition \eqref{eq:homcalc} $\Lambda$ is a $\NN$-graded ring. Put $\Lambda_{\ge u}=\bigoplus_{m\ge u} \Lambda_m$. Then
  (as ungraded rings) we have
  \[
\bar{\Lambda}=
    \begin{pmatrix}
      \Lambda & \Lambda_{\ge 1} &\cdots &\Lambda_{\ge n-1}\\
      \Lambda & \Lambda &\cdots &\Lambda_{\ge n-2}\\
      \vdots & \vdots &\ddots &\vdots\\
      \Lambda & \Lambda &\cdots &\Lambda_{\ge 1}\\
      \Lambda & \Lambda &\cdots &\Lambda
      \end{pmatrix}.
    \]
    If we put $\Gamma=M_n(\Lambda)$ then $\bar{\Lambda}\subset \Gamma$ and moreover $\Gamma$ is (left and right) projective over~$\bar{\Lambda}$ and in addition the  multiplication map $\Gamma\otimes_{\bar{\Lambda}}\Gamma\rightarrow \Gamma$ is an isomorphism (it is a surjective map between projective $\Gamma$-modules of the same rank). We claim that $\Gamma$ (and hence $\Lambda$) has finite global dimension. Indeed if $M$ is a right $\Gamma$-module and $P^\bullet \rightarrow M$ is a
    finite projective resolution of $M$ as $\bar{\Lambda}$-module (which exists since $\gldim \bar{\Lambda}< \infty$) then
    $\Gamma\otimes_{\bar{\Lambda}} P^\bullet$ is a finite $\Gamma$-projective resolution of $\Gamma\otimes_{\bar{\Lambda}} M =
 \Gamma\otimes_{\bar{\Lambda}} \Gamma \otimes_{\Gamma}   M\cong M$.

  Moreover for $i>0$:
  \begin{align*}
    \Ext^i_R(\End_Y(\Tscr),\omega_R)&=\Ext^i_X(\pi_\ast \uEnd_Y(\Tscr),\omega_X)\\
    &=    \Ext^i_Y(\uEnd_Y(\Tscr),\omega_Y)\\
                                              &=\Ext^i_Y(\Tscr,\Tscr(-n+1))\\
    &=0
  \end{align*}
where in the second line we have used Grothendieck duality, in the third line the easily verified fact that $\omega_Y=\gamma^\ast(\omega_Z(1))$ 
and in the fourth line \eqref{eq:homcalc} and (\ref{it:van1},\ref{it:van2}). It follows
that $\End_Y(\Tscr)$ is maximal Cohen-Macaulay over $R$.
\end{proof}
\subsection{Three-dimensional affine toric varieties}
\label{sec:toric}
For simplicity we define an affine toric variety as $X=\Spec R$ where $R=k[\sigma^\vee \cap M]$ where $M$ is a lattice and
$\sigma^\vee$ is a strongly convex full dimensional
lattice cone in $M_\RR$. Such an $R$ is Gorenstein if there exist $m\in M$ such that $\sigma$ (the dual cone of $\sigma^\vee$) is spanned by
lattice vectors $x\in M^\vee$ satisfying $\langle x, m\rangle=1$. The lattice polytope associated to $R$ is defined as $P =\sigma \cap \langle m,-\rangle$.

In this case there is the following beautiful result by Broomhead \cite[Theorem 8.6]{Broom}.
\begin{theorem} \label{th:broomhead}
  The coordinate ring of a 3-dimensional Gorenstein affine toric variety admits a toric NCCR.
\end{theorem}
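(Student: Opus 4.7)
\noindent\textit{Proof plan.} Using the Gorenstein hypothesis, write $R = k[\sigma^\vee\cap M]$ with $\sigma$ spanned by lattice points on the affine hyperplane $\langle -,m\rangle = 1$; the slice $P = \sigma \cap \{\langle -,m\rangle = 1\}$ is then a lattice polygon in a rank-$2$ lattice, and $R$ is recovered from $P$. The plan is to construct the NCCR as the Jacobian algebra of a dimer model (brane tiling) on the real $2$-torus whose combinatorics encode $P$.

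First, I would associate to $P$ a \emph{consistent} dimer model $D$ on the $2$-torus, that is, a bipartite graph cellularly embedded in a torus such that its perfect matchings are in bijection with the lattice points of $P$ and the ``characteristic polygon'' built from its zig-zag flows equals $P$. The existence of such a $D$ for an arbitrary lattice polygon is the non-trivial combinatorial input; I would invoke (or re-derive via a cut-and-paste procedure) the Gulotta--Ishii--Ueda algorithm producing a properly ordered, equivalently zig-zag consistent, dimer model.

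Next, from $D$ one forms the quiver with superpotential $(Q_D, W_D)$: vertices of $Q_D$ are the faces of $D$, arrows are the edges of $D$ oriented so that white nodes lie to the left, and $W_D$ is the signed sum of the cycles around the white and black nodes. Set $\Lambda := kQ_D/\partial W_D$. The translation action on the universal cover of the torus makes $\Lambda$ naturally $M$-graded, hence a $\ZZ^{\operatorname{rk} M}$-graded $R$-algebra. Fixing a vertex $v_0$ and letting $N := \Lambda e_{v_0}$, the tautological map $\Lambda \to \End_R(N)$ is an isomorphism, and the decomposition $N = \bigoplus_v e_v \Lambda e_{v_0}$ exhibits $N$ as a direct sum of rank-one reflexive toric $R$-modules, so $\Lambda$ is of toric form $\End_R(N)$ with $R \in \add(N)$.

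Finally I would verify the NCCR properties. Each summand of $N$ is a rank-one reflexive module over the Gorenstein ring $R$, hence maximal Cohen--Macaulay, and therefore so is $\Lambda$. For the homological dimension one shows, using consistency, that every zig-zag flow is a geodesic on the torus; this is the combinatorial content that forces $\Lambda$ to be a $3$-Calabi--Yau algebra in the sense of Ginzburg, which in turn implies $\gldim\Lambda = 3 < \infty$. Combined with the identification $\Lambda = \End_R(N)$, this yields an NCCR of $R$ of the required toric form.

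The principal obstacle is the existence step together with the passage from consistency of $D$ to the $3$-Calabi--Yau property of $\Lambda$: once zig-zag consistency is secured, the Cohen--Macaulayness is nearly formal, but the finiteness of global dimension relies on a delicate geometric argument on the torus showing that the natural complex of projective $\Lambda^e$-modules built from $Q_D$ and $W_D$ is a bimodule resolution of $\Lambda$.
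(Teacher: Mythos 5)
Your plan is correct and is essentially the proof the paper points to: the paper does not prove this theorem itself but cites Broomhead's dimer-model argument, whose structure is exactly what you describe — produce a zig-zag/algebraically consistent dimer model with characteristic polygon $P$ via the Gulotta and Ishii--Ueda combinatorics, identify the Jacobian algebra of the associated quiver with superpotential with $\End_R(N)$ for $N$ a sum of rank-one reflexive (toric) modules, and deduce the NCCR properties from the Calabi--Yau-3 property forced by consistency. (One tiny caveat: perfect matchings of a consistent dimer model need not be in bijection with the lattice points of $P$ — only the characteristic-polygon condition you also state is needed — and the paper additionally notes an alternative dimer-free proof using the same combinatorics.)
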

By a toric NCCR we mean that the reflexive module defining the NCCR is
isomorphic to a sum of  ideals.  Broomhead's proof uses the
theory of ``dimer models'' which is possible thanks to the combinatorics 
\cite{Gulotta,IshiiUeda}.
A proof not using dimer models but using
this combinatorics directly was given in
\cite{SVdB5}.

\medskip

A different method for constructing NCCRs for affine Gorenstein toric varieties was given in \cite{SVdB4} and is based on a standard fact from toric geometry:
\begin{lemma} \label{prop:dm} A subdivision of $\sigma$ obtained by a regular triangulation of $P$ with no extra vertices
  yields a projective crepant resolution of $\Spec R$ by a toric Deligne Mumford stack \cite{borisov2005orbifold}. If $\dim X\le 3$ then such a crepant resolution   has fibers of dimension $\le 1$.
  \end{lemma}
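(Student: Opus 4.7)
The first assertion is standard toric geometry. A regular triangulation of $P$ whose vertex set is contained in the lattice points of $P$ gives a simplicial refinement $\Sigma$ of $\sigma$; regularity of the triangulation supplies $\Sigma$ with a polytopal (hence projective) structure. Feeding this stacky fan data into the Borisov--Chen--Smith construction \cite{borisov2005orbifold} produces a smooth toric Deligne--Mumford stack $\Yscr$ together with a projective birational morphism $\pi:\Yscr\to \Spec R$. Crepancy then reduces to the usual toric criterion: $K_{\Yscr}=-\sum_{v}D_{v}$, where the sum runs over all rays $v$ of $\Sigma$, and $\pi^{\ast}\omega_{X}=\omega_{\Yscr}$ is equivalent to $\langle v,m\rangle =1$ for every primitive ray generator $v$ of $\Sigma$. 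Since each such $v$ is a lattice point of $P$ and $P\subset \{\langle -,m\rangle =1\}$ by the very definition of $P$, this holds automatically.

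For the fiber-dimension statement the case $\dim X\le 2$ is immediate, since a projective birational morphism between surfaces has fibers of dimension at most one. So assume $\dim X=3$ and that $P$ is a two-dimensional lattice polygon. The hypothesis ``no extra vertices'' must be read as the statement that no vertex of the triangulation lies in the relative interior of $P$; equivalently, every new ray $\rho$ of $\Sigma$ (one not already a ray of $\sigma$) is contained in a proper face of $\sigma$ rather than in its relative interior. Extracting this geometric content from the combinatorial wording is the essential step, and it is exactly what prevents a two-dimensional exceptional component of $\pi$ from being contracted to the unique torus-fixed point of $\Spec R$.

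Granted this, I would finish by a short case analysis on cones $\tau\in\Sigma$ that are not faces of $\sigma$, using the standard description of the orbit map: $V(\tau)\subset \Yscr$ has dimension $3-\dim\tau$, and $\pi$ sends $V(\tau)$ surjectively onto $V(\tau')$, where $\tau'$ is the smallest face of $\sigma$ containing $\tau$, so that the generic fiber of $V(\tau)\to V(\tau')$ has dimension $\dim\tau'-\dim\tau$. When $\tau$ is a new ray, it lies in some facet $F$ of $\sigma$, hence $\tau'=F$ and the fiber dimension is $2-1=1$. When $\tau$ is a new two-cone, either $\tau'$ is a facet of $\sigma$ (fiber dimension $0$) or $\tau'=\sigma$ (fiber dimension $1$). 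When $\tau$ is a new three-dimensional cone, then $\tau'=\sigma$ and the fiber dimension is $3-3=0$. In all three cases no component of any fiber of $\pi$ exceeds dimension one.

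The main obstacle in this plan is the second paragraph: correctly pinning down the combinatorial hypothesis ``no extra vertices'' and translating it into the geometric statement that new rays of $\Sigma$ avoid the relative interior of $\sigma$ (for otherwise a new interior ray would produce a two-dimensional exceptional divisor contracted to the origin, violating the conclusion). Once this dictionary is in place, both the construction of the DM stack and the dimension bookkeeping are routine applications of the Borisov--Chen--Smith formalism and standard toric orbit geometry.
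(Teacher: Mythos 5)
Your proof is correct. The paper states this lemma without proof, as ``a standard fact from toric geometry,'' so there is nothing to compare it against; your argument is the standard one, and you have correctly isolated the one point where the statement could go wrong, namely the reading of ``no extra vertices.'' The most literal reading is that the triangulation uses only the vertices of the polytope $P$ itself (so the fan acquires no new rays at all), which is slightly stronger than your reading (no vertices in $\relint P$, but possibly extra lattice vertices on $\partial P$); your case analysis covers both, and both make the lemma true, whereas a triangulation using an interior lattice point of $P$ would indeed violate the fiber bound --- e.g.\ for $\CC^3/(\ZZ/3\ZZ)$ with weights $(1,1,1)$ the triangle $P$ has an interior lattice point and the corresponding crepant resolution has a $\PP^2$ over the fixed point. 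The crepancy computation (all ray generators lie at height one against $m$) and the orbit--fiber bookkeeping are exactly as they should be.
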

In dimension $\le 3$ one may then, starting from a sequence
  of generating line bundles, construct a tilting bundle using the principle of ``killing backward $\Ext^1$'s'' (see Remark \ref{rem:backward}). 

\medskip  

  While this method yields an NCCR,
it generally does not yield a toric one. On the other hand it is also applicable to some higher dimensional toric singularities which do not have a toric NCCR.
\begin{example}[\protect{\cite[\S9.1]{SVdB},\cite[Example 6.4]{SVdB4}}]
  \label{ex:toric}
Let $T=G^2_m$ be the two dimensional torus and (after the identifying the character group $X(T)$ of $T$ with $ \ZZ^2$) 
consider the vector space  $W$
with weights  $(3,0)$, $(1,1)$, $(0,3)$, $(-1,0)$, $(-3,-3)$, $(0,-1)$. Put $R:=\operatorname{Sym}(W)^T= k[x_1,x_2,x_3,x_4,x_5,x_6]^T=k[x_2x_4x_6,x_1x_3x_5,x_1x_4^3,x_3x_6^3,x_2^3x_5]\cong k[a, b, c, d, e]/(a^3 b -cde)$. Clearly $R$ is the coordinate ring of a 4-dimensional affine toric variety, but it was  shown in
\cite[\S9.1]{SVdB} that $R$ does not have a toric NCCR.

On the other by \cite[Proposition 6.1]{SVdB4}, $R$ 
does have a non-toric NCCR. In \cite[Example 6.4]{SVdB4} an explicit NCCR is constructed which is given by a reflexive module which is the direct
sum of 12 modules of rank 1 and 1 module of rank 2.
\end{example}
We conjecture:
\begin{conjecture} \label{conj:nccrtoric}
  An affine Gorenstein toric variety always has an NCCR.
\end{conjecture}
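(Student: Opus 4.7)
For an affine Gorenstein toric variety $X=\Spec R$ with lattice polytope $P$, I would construct an NCCR of $R$ as $\End_\Yscr(\Tscr)$ for a tilting bundle $\Tscr$ on a toric Deligne--Mumford stack $\pi\colon\Yscr\rightarrow X$ crepantly resolving~$X$. By Lemma~\ref{prop:dm}, any regular triangulation of $P$ without extra vertices (for instance a pulling triangulation, which always exists for a lattice polytope) yields such a~$\Yscr$. Finite global dimension of $\Lambda:=\End_\Yscr(\Tscr)$ is then immediate from smoothness of $\Yscr$ and Theorem~\ref{th:kellerkrause}, and, assuming $\Oscr_\Yscr$ appears as a direct summand of $\Tscr$, the argument of Theorem~\ref{th:fibers} identifies $\Lambda$ with $\End_R(M)$ for $M:=\pi_\ast\Tscr$ reflexive. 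So it would suffice to verify that $\Lambda$ is Cohen--Macaulay and to exhibit $\Tscr$.

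\textbf{Cohen--Macaulayness is automatic.} Since $R$ is affine Gorenstein toric, $\omega_R\cong R$, and crepancy of $\pi$ gives $\omega_\Yscr\cong\pi^\ast\omega_R\cong\Oscr_\Yscr$ in $\Pic(\Yscr)$. Grothendieck duality for $\pi$, combined with $\Tscr^{\vee\vee}\cong\Tscr$ and the tilting property, then yields
\begin{align*}
\RHom_R(\Lambda,\omega_R)
&\cong R\Gamma\bigl(\Yscr,\uRHom_\Yscr(\HEnd(\Tscr),\omega_\Yscr)\bigr)\\
&\cong R\Gamma\bigl(\Yscr,\uRHom_\Yscr(\Tscr,\Tscr\otimes\omega_\Yscr)\bigr)\\
&\cong R\Gamma\bigl(\Yscr,\uRHom_\Yscr(\Tscr,\Tscr)\bigr)\cong\Lambda,
\end{align*}
concentrated in degree~$0$ because $\Tscr$ is tilting. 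Hence $\Ext^i_R(\Lambda,\omega_R)=0$ for $i>0$, so $\Lambda$ is maximal Cohen--Macaulay and is an NCCR in the sense of Definition~\ref{ref-4.1-8}.

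\textbf{Main obstacle: the tilting bundle.} Everything thus rests on producing a tilting bundle on the smooth toric DM stack $\Yscr$, and this is where I expect the genuine difficulty to lie. In dimension $\le 3$, the fibers of $\pi$ have dimension $\le 1$ and one builds $\Tscr$ starting from a generating set of line bundles by iteratively applying the ``killing backward $\Ext^1$'' extension construction of Remark~\ref{rem:backward}; this recovers Theorem~\ref{th:broomhead} and is how Example~\ref{ex:toric} was treated. In arbitrary dimension, backward $\Ext$'s can occur in all positive degrees, so one would have to replace the semi-universal extension step by iterated semi-universal resolutions and argue that the process terminates with an honest vector bundle. Alternative lines of attack include (a) leveraging Kawamata's full exceptional collections of line bundles on smooth projective toric DM stacks and mutating to a strong exceptional collection, and (b) adapting the Hanlon--Hicks--Lazarev style cellular/virtual resolutions of the diagonal for toric varieties to the relative affine setting over $X$. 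The delicate point in every approach is to obtain a tilting \emph{bundle} rather than a mere tilting complex, so that the resulting $\End_\Yscr(\Tscr)$ has the explicit shape $\End_R(M)$ for a single reflexive $M$ required by Definition~\ref{def:NCR}; it is this existence question, not any homological computation, that I believe is the true bottleneck of Conjecture~\ref{conj:nccrtoric}.
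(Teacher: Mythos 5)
The statement you are addressing is stated in the paper as Conjecture \ref{conj:nccrtoric}: it is open, and the paper offers no proof, only partial evidence --- Broomhead's Theorem \ref{th:broomhead} in dimension three, Corollary \ref{cor:162} for quasi-symmetric torus representations, and sporadic higher-dimensional examples such as Example \ref{ex:toric}. Your proposal is therefore, as you yourself acknowledge, not a proof but a reduction: granting a tilting bundle $\Tscr$ on a toric DM-stack crepant resolution $\Yscr\rightarrow X$ (such a resolution exists by Lemma \ref{prop:dm}), your Grothendieck-duality computation showing that $\End_{\Yscr}(\Tscr)$ is maximal Cohen--Macaulay is sound and is exactly the mechanism behind Theorem \ref{th:iw} and \cite[Lemma 3.2.9]{VdB04a}. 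This reduction coincides with the route the paper actually follows in \S\ref{sec:toric} for $\dim X\le 3$ and for the examples of \cite{SVdB4}, where the fibers of the resolution have dimension $\le 1$ and the ``killing backward $\Ext^1$'' device of Remark \ref{rem:backward} produces $\Tscr$.

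The genuine gap is the one you name yourself: in dimension $\ge 4$ no general construction of a tilting bundle (or even of a tilting complex whose endomorphism ring has the shape $\End_R(M)$ for $M$ reflexive) on such a stack $\Yscr$ is known, and this is precisely why the statement remains a conjecture rather than a theorem; the proposal establishes no new case of it. Two further cautions. First, your strategy silently assumes that an NCCR, if it exists, must be realized on a crepant DM-stack resolution; this is itself only plausible via the open Conjecture \ref{ref-4.6-10}, and an NCCR might instead have to be found by entirely different means (for example the module-of-covariants constructions of \S\ref{sec:quotient}, which yield Corollary \ref{cor:162} without ever producing a geometric resolution first). Second, if your constructions only deliver a tilting \emph{complex}, you would need a stacky analogue of Theorem \ref{th:iw} to conclude crepancy of the endomorphism ring; the paper states that theorem for schemes. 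None of this faults your analysis --- you have correctly located the bottleneck --- but the essential existence step is missing, so the argument does not close.
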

Besides Theorem \ref{th:broomhead} this conjecture is also true for ``quasi-symmetric GIT quotients'' for tori. See Corollary \ref{cor:162}  below.

By \cite[Theorem A.1]{SVdB4} the Grothendieck group of the DM-stack exhibited in Lemma \ref{prop:dm}
has rank $\Vol(P)$.
This suggests the following conjecture:
\begin{conjecture}
  \label{conj:volume}
The number of indecomposable summands in the reflexive module defining an NCCR of $R$ is equal to $\Vol(P)$.
\end{conjecture}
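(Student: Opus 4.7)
The plan is to reinterpret both sides of the conjectured equality as ranks of Grothendieck groups and then bridge them via Conjecture \ref{ref-4.6-10}.

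First, I would reduce the left-hand side to a K-theoretic invariant. Passing to the henselization of $R$ at a suitable maximal ideal to ensure Krull-Schmidt, write $M \cong \bigoplus_{i=1}^s M_i^{n_i}$ with the $M_i$ pairwise non-isomorphic indecomposable reflexive modules. Then $\Lambda = \End_R(M)$ is Morita equivalent to the basic algebra $\End_R\bigl(\bigoplus_i M_i\bigr)$, whose indecomposable projective modules are the $\Hom_R(M,M_i)$, one for each distinct summand of $M$. Because $\Lambda$ is an NCCR and hence has finite global dimension, these indecomposable projectives descend to a $\ZZ$-basis of $K_0(\Lambda)$. Thus the number of distinct indecomposable summands of $M$ equals $\rk K_0(\Lambda)$, and the conjecture reduces to showing $\rk K_0(\Lambda) = \Vol(P)$.

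Next, I would bring in the geometric model. Lemma \ref{prop:dm} produces, starting from a regular triangulation of $P$ with no added vertices, a projective crepant resolution $\pi \colon Y \to \Spec R$ by a toric Deligne-Mumford stack, and \cite[Theorem A.1]{SVdB4} computes $\rk K_0(Y) = \Vol(P)$. The proof would therefore be complete if one could establish $\rk K_0(\Lambda) = \rk K_0(Y)$.

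The bridging step is the hard part. The natural route is to establish a derived equivalence $\Dscr(\Lambda) \cong \Dscr(Y)$ as predicted by Conjecture \ref{ref-4.6-10}, which automatically induces the required isomorphism on $K_0$ and closes the argument. In dimension at most three this is reachable via Proposition \ref{prop:iff} together with \cite[Corollary 8.8]{IR}, consistent with Broomhead's Theorem \ref{th:broomhead}. In higher dimension no such equivalence is available in general, and this is the principal obstacle: matching an a priori arbitrary NCCR $\Lambda$ to the specific stacky resolution $Y$ of $\Spec R$. A potentially more tractable weakening would be to construct directly a partial tilting object $\Pscr \in \Dscr(Y)$ with $\End_Y(\Pscr) \cong \Lambda$, or at least an exact functor $\Dscr(\Lambda) \to \Dscr(Y)$ inducing an isomorphism on $K_0$ — but even this weaker goal seems to require a finer understanding of the relationship between toric NCCRs and their geometric counterparts than is currently available.
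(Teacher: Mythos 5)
This statement is a conjecture, and the paper does not prove it; it only motivates it by the observation (immediately preceding the conjecture in the text) that the Grothendieck group of the stacky crepant resolution of Lemma \ref{prop:dm} has rank $\Vol(P)$ by \cite[Theorem A.1]{SVdB4}. Your proposal is essentially an articulation of that same heuristic: you identify the number of indecomposable summands with $\rk K_0(\Lambda)$, identify $\Vol(P)$ with $\rk K_0(Y)$ for the DM-stack resolution $Y$, and then invoke a derived equivalence $\Dscr(\Lambda)\cong\Dscr(Y)$ to equate the two ranks. That last step is exactly Conjecture \ref{ref-4.6-10}, which is open beyond dimension three, so what you have written is a conditional reduction, not a proof. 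You say this yourself, and to your credit the reduction is clean and matches how the paper itself (and the companion Conjecture \ref{con:counting}, via periodic cyclic homology) frames the expected mechanism; but a referee cannot accept an argument whose load-bearing step is an unproven conjecture.

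Two smaller points on the unconditional part. First, the identification of the number of non-isomorphic indecomposable summands of $M$ with $\rk K_0(\Lambda)$ requires a Krull--Schmidt context; passing to the henselization (or completion, or using the connected $\NN$-grading that toric $R$ carries) is the right move, and note that the paper's refinement Conjecture \ref{con:counting} explicitly imposes such hypotheses — the count should be of \emph{non-isomorphic} summands, as there. Second, the fact that the classes of the indecomposable projectives form a $\ZZ$-basis of $K_0(\Lambda)$ follows from semiperfectness (i.e.\ $K_0(\Lambda)=K_0(\Lambda/\rad\Lambda)$), not from finite global dimension; the latter is what you need if you instead want to compare with $G_0$ or with $K_0$ of the resolution via a derived equivalence that is only known to preserve the bounded derived category of coherent sheaves. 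Neither point is fatal, but both should be stated precisely if this sketch is ever to be upgraded. As it stands, the genuine gap is the bridging step you flag: in dimension $\ge 4$ there is no known comparison between an arbitrary NCCR of a Gorenstein toric $R$ and the stacky resolution $Y$, even at the level of $K_0$, and Example \ref{ex:toric} shows that NCCRs need not be toric, which rules out the most naive combinatorial matching of summands with lattice simplices of the triangulation.
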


\subsection{Mutations}
It follows from the minimal model program that the number of crepant resolutions of an algebraic variety is
finite\footnote{I thank Shinnosuke Okawa for explaining to
  me how this follows from \cite{BCHM}.}. On the other hand NCCRs can
be modified by a process called ``mutation'' which is closely related to flopping of crepant resolutions. The difference is
that the mutation process generally leads 
to an infinite number of different NCCRs (see however Example \ref{ex:IW2} below).

The following definitions and results are taken from \cite{IW}. Let $R$ be a normal Gorenstein ring.
Let $M$ be a reflexive $R$-module such that $\Lambda=\End_R(M)$ is an NCCR and let $0\neq N\in \add(M)$.
Let $K_0$ be defined by the short exact sequence
\[
  0\rightarrow K_0\rightarrow N_0\rightarrow M
\]
where $N_0\in \add(N)$  is a \emph{right approximation} of $M$, i.e. any other map $N'_0\rightarrow M$ with $N'_0\in \add(N)$ factors through $N_0$.
One defines the \emph{right mutation} of $M$ at $N$ to be $\mu^+_N(M):=N\oplus K_0$. The \emph{left mutation} of $M$ at $N$ is defined
via duality as $\mu^{-}_N(M)=(\mu^+_{N^\vee}(M^\vee))^\vee$. We also put $\mu^{\pm}_N(\Lambda)=\End_R(\mu^\pm_N(M))$. Note however that
the passage from $\mu^{\pm}_N(M)$ to $\mu^{\pm}_N(\Lambda)$ loses some information.

\begin{remark}
Needless to say that $\mu^{\pm}_N(M)$ is only determined up to additive closure (i.e.\ up to taking $\add(-)$).
However if $R$ is complete local then we can make
a minimal choice for $\mu^+_N(M)$ which we will do silently.
\end{remark} 
\begin{theorem}[\protect{\cite[Theorems 1.22, 1.23]{IW}}]
Let $M,N, \Lambda$ be as above.
\begin{enumerate}
\item $\mu^{\pm}_N (M)$ define NCCRs.
  \item $\Lambda$, $\mu^+_N(\Lambda)$ and $\mu^-_N(\Lambda)$ are all derived equivalent.
  \item $\mu^+_N$ and $\mu^-_N$ are mutually inverse operations (this statement makes sense since $N\in \add(\mu^{\pm}_N(M))$). 
  \end{enumerate}
\end{theorem}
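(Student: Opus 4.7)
The plan is to reduce all three parts to the construction of a single tilting module over $\Lambda$, following the strategy of Iyama--Wemyss. Since $N\in\add(M)$, after replacing $M$ by an $\add$-equivalent module I may assume $N$ is a direct summand of $M$, giving an idempotent $e\in\Lambda$ with $\Hom_R(M,N)\cong \Lambda e$. The candidate tilting module is
\[
V_N:=\Hom_R(M,\mu^+_N(M))=\Hom_R(M,N)\oplus\Hom_R(M,K_0),
\]
naturally a $(\End_R(\mu^+_N(M)),\Lambda)$-bimodule. The first summand is projective; applying $\Hom_R(M,-)$ to the approximation sequence $0\r K_0\r N_0\r M\r 0$ gives a projective presentation
\[
0\r \Hom_R(M,K_0)\r \Hom_R(M,N_0)\r \Hom_R(M,M)=\Lambda
\]
of the second summand (the right-hand map is surjective in codimension one, hence the sequence is exact after reflexivization).

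The main technical step is to show that $V_N$ is a tilting $\Lambda$-module with $\End_\Lambda(V_N)^{\mathrm{op}}\cong \End_R(\mu^+_N(M))$. Projective dimension at most one follows from the presentation above; the crucial Ext-vanishing $\Ext^{>0}_\Lambda(V_N,V_N)=0$ rests on translating Ext computations over $\Lambda$ into depth statements over $R$ via Auslander--Buchsbaum, and then invoking that $\Lambda$ is MCM and reflexive as $R$-module (NCCR hypothesis) together with the defining universal property of the right $\add(N)$-approximation $N_0\r M$. Generation of $D(\Lambda)$ is an easy consequence of the presentation, since $\Lambda$ itself sits in a triangle built out of summands of $V_N$. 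The identification of $\End_\Lambda(V_N)$ with $\End_R(\mu^+_N(M))^{\mathrm{op}}$ uses the fully faithful functor $\Hom_R(M,-):\add(M)\hookrightarrow\Mod(\Lambda)$. Theorem \ref{th:kellerkrause} (more precisely its tilting-module variant \cite{KellerKrause,Ri}) then yields the derived equivalence between $\Lambda$ and $\mu^+_N(\Lambda)$, and the symmetric statement for $\mu^-_N$ follows by dualizing the entire argument via $(-)^\vee$, giving (2).

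Part (1) is immediate: derived equivalence preserves finiteness of global dimension, so $\mu^+_N(\Lambda)$ has finite global dimension; the depth lemma applied to $0\r K_0\r N_0\r M\r 0$ (where $M$, $N_0$ are MCM reflexive over Gorenstein $R$) shows $K_0$ is MCM, so $\mu^+_N(M)$ is MCM reflexive, and the standard depth+reflexivity argument (as in the proof of Theorem \ref{th:fibers}) then makes $\mu^+_N(\Lambda)=\End_R(\mu^+_N(M))$ MCM and reflexive. Hence $\mu^+_N(\Lambda)$ is an NCCR; the case of $\mu^-_N$ is dual. Part (3) is essentially formal: dualizing the approximation sequence yields $0\r M^\vee\r N_0^\vee\r K_0^\vee\r 0$, and one checks that $N_0^\vee \r K_0^\vee$ is a right $\add(N^\vee)$-approximation (translating the universal property directly), so unwinding definitions gives $\mu^-_N\circ\mu^+_N\cong\id$ on $\add$-classes, and by symmetry also $\mu^+_N\circ\mu^-_N\cong\id$. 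The main obstacle is the Ext-vanishing in the tilting step, where all hypotheses (NCCR, MCM, reflexivity, universality of the approximation) must combine simultaneously; this is the genuinely nontrivial input, and is where the analysis of \cite{IW} is most delicate.
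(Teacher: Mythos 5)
The survey itself contains no proof of this theorem (it is quoted from [IW]), and your outline does follow the genuine Iyama--Wemyss strategy: realize the derived equivalence via the $(\End_R(\mu^+_N(M)),\Lambda)$-bimodule $\Hom_R(M,\mu^+_N(M))$, show it is tilting, and deduce (1) and (3) afterwards. However, the step you treat as routine --- ``projective dimension at most one follows from the presentation above'' --- is where your argument breaks. Applying $\Hom_R(M,-)$ to $0\to K_0\to N_0\to M$ gives the left-exact sequence $0\to\Hom_R(M,K_0)\to\Hom_R(M,N_0)\to\Lambda$, which exhibits $\Hom_R(M,K_0)$ as the \emph{kernel} of a map between projectives, i.e.\ as a second syzygy; this places no bound whatsoever on its projective dimension. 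Moreover the right-hand map is genuinely non-surjective: its image is the ideal of endomorphisms of $M$ factoring through $\add(N)$ (essentially $\Lambda e\Lambda$), whose cokernel is nonzero whenever $M\notin\add(N)$. That cokernel does vanish in codimension one (there $M$ is free, so $\add(N)=\add(M)$), but ``exact after reflexivization'' does not convert the four-term exact sequence $0\to\Hom_R(M,K_0)\to\Hom_R(M,N_0)\to\Lambda\to\Lambda/\Lambda e\Lambda\to0$ into a length-one projective resolution. In [IW] the projective dimension bound and the $\Ext$-vanishing come from the \emph{other} exchange sequence (the one realizing $M$ as a mutation of $\mu^+_N(M)$ back again, equivalently the dual approximation) together with careful depth counts on the resulting Hom-modules; this is exactly the content you would need to supply.

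Part (1) is also not ``immediate''. Finite global dimension of $\mu^{\pm}_N(\Lambda)$ does follow from the derived equivalence, but Cohen--Macaulayness of $\End_R(\mu^+_N(M))$ does not follow from Cohen--Macaulayness of $\mu^+_N(M)$: for a reflexive, even maximal Cohen--Macaulay, module $P$ over a Gorenstein normal domain, $\End_R(P)$ need not be Cohen--Macaulay --- that is precisely the ``modifying'' condition that [IW] must verify is preserved under mutation. The proof of Theorem \ref{th:fibers} that you cite runs in the opposite direction (it deduces reflexivity from an endomorphism ring already known to be Cohen--Macaulay). In addition, since $N_0\to M$ need not be surjective, the depth lemma applied to $0\to K_0\to N_0\to M$ only gives $\depth K_0\ge 2$ directly. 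In [IW] the fact that $\mu^{\pm}_N(M)$ again defines an NCCR is deduced from the derived equivalence together with a homological characterization of modifying modules, not from a depth count on $\mu^+_N(M)$ itself. Your part (3) is essentially right, modulo the same care about non-surjectivity when dualizing the exchange sequence.
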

If $R$ is complete local of dimension 3, things simplify.
Let us call a reflexive $R$-module \emph{basic} if every indecomposable summand appears
occurs only once.
\begin{theorem}[\protect{\cite[Theorems 1.25]{IW}}]
  Assume that $R$ is complete local of dimension~3. Let $M$ be a basic reflexive $R$-module defining an NCCR, having at least two non-isomorphic
  indecomposable summands and let $M_i$ be such an indecomposable summand.
  Then $\mu_{M/M_i}^+(M)\cong \mu_{M/M_i}^-(M)$.
\end{theorem}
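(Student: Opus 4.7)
The plan is to unwind the definition of left mutation via duality, reducing the theorem to an isomorphism between the kernel of a right $\add N$-approximation and the cokernel of a left $\add N$-approximation of the indecomposable summand $M_i$, and then to invoke the $2$-Calabi--Yau structure on the stable category of maximal Cohen--Macaulay modules that a complete local Gorenstein dimension-three NCCR provides.

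First I would write $M = N \oplus M_i$ (possible since $M$ is basic and $N = M/M_i$) and fix a surjective right $\add N$-approximation $p\colon N_0 \twoheadrightarrow M_i$, yielding a short exact sequence $0 \to K_0 \to N_0 \to M_i \to 0$, so that $\mu^+_N(M) = N \oplus K_0$. To unpack $\mu^-_N(M) = (\mu^+_{N^\vee}(M^\vee))^\vee$, I would pick a right $\add N^\vee$-approximation $N'_0 \twoheadrightarrow M_i^\vee$ with kernel $K'_0$. Because $\Lambda$ is an NCCR, $M$ is a summand of the MCM $R$-module $\Lambda$ (as in the proof of Theorem 3.1), so $M_i, N_0, N'_0$ are MCM; the depth lemma in dimension three forces $K_0, K'_0$ to be MCM as well, and over the Gorenstein ring $R$ all $\Ext^{>0}_R(-,R)$ vanish on these modules. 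Consequently dualising the second sequence yields an exact sequence $0 \to M_i \to (N'_0)^\vee \to (K'_0)^\vee \to 0$, in which $M_i \to (N'_0)^\vee$ is a left $\add N$-approximation (by the standard duality bijection between maps from $\add N$ and maps to $\add N^\vee$). Setting $L_0 := (K'_0)^\vee$ and $N_1 := (N'_0)^\vee$ gives $\mu^-_N(M) = N \oplus L_0$, so the theorem reduces to proving $K_0 \cong L_0$.

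Next, I would bring in the dimension-three NCCR input. Proposition 3.9 supplies $\Ext^1_\Lambda(M,M) = 0$, while complete local Gorenstein dimension three endows the stable category $\underline{\operatorname{CM}}(R)$ with the structure of a Hom-finite triangulated category enjoying $2$-Calabi--Yau duality of the form $\operatorname{Hom}_{\underline{\operatorname{CM}}(R)}(X, Y[1]) \cong D\operatorname{Hom}_{\underline{\operatorname{CM}}(R)}(Y, X[1])$, where $D$ denotes Matlis duality. In this category the two short exact sequences above become exchange triangles
\[
K_0 \to N_0 \to M_i \to K_0[1], \qquad M_i \to N_1 \to L_0 \to M_i[1],
\]
and the Ext-vanishing promotes $\add M$ to a $2$-cluster-tilting subcategory. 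The Iyama--Yoshino symmetry of cluster-tilting mutation then identifies $L_0$ with $K_0$ in $\underline{\operatorname{CM}}(R)$; since both sides are MCM and $R$ is complete local, the minimality of the chosen approximations rules out free $R$-summands and yields $K_0 \cong L_0$ as $R$-modules, finishing the proof.

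The hard part will be the middle step: upgrading the algebraic Ext-vanishing of Proposition 3.9 into a genuine cluster-tilting statement inside a $2$-Calabi--Yau triangulated category, and comparing the two exchange triangles via the CY pairing. This is transparent when $R$ has an isolated singularity, where the classical Auslander--Reiten duality on $\underline{\operatorname{CM}}(R)$ supplies the required structure directly; for a general complete local Gorenstein threefold one has to appeal to the relative or singular versions of Calabi--Yau duality and cluster-tilting mutation developed in \cite{IR,IyamaWemyssNCBO}, which package exactly the symmetry we need.
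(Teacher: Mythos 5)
The survey does not prove this statement; it is quoted verbatim from Iyama--Wemyss \cite[Theorem 1.25]{IW}, so your proposal can only be judged against that source. Your first reduction --- unwinding $\mu^-$ through $(-)^\vee$ and reducing the theorem to an isomorphism $K_0\cong L_0$ between the kernel of a right $\add N$-approximation of $M_i$ and the cokernel of a left $\add N$-approximation of $M_i$ --- is the correct and standard opening move. But two of the assumptions you slip in along the way are unjustified: you take the right approximation $N_0\twoheadrightarrow M_i$ to be surjective and you claim $M_i$, $N_0$, $N_0'$ are MCM "as summands of $\Lambda$". Neither follows from the hypotheses. The definition of NCCR used here only requires $M$ reflexive with $\End_R(M)$ MCM of finite global dimension; there is no assumption $R\in\add(M)$, so $M$ need not be a summand of $\Lambda$ as an $R$-module, need not be MCM, and a right $\add N$-approximation need not be onto (this is exactly why the exchange sequence in the survey is written $0\to K_0\to N_0\to M$ with no trailing $\to 0$). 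Iyama--Wemyss set up their mutation precisely so as to avoid these assumptions, working with reflexive modules and with approximations that are only surjective after applying $\Hom_R(N,-)$.

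The more serious gap is the one you yourself flag as "the hard part". The $2$-Calabi--Yau structure and Hom-finiteness of $\underline{\operatorname{CM}}(R)$, and hence the Iyama--Yoshino exchange-triangle symmetry you want to quote, are available only when $R$ has isolated singularities. The theorem has no such hypothesis, and the interesting examples (e.g.\ the $3$-dimensional Gorenstein toric singularities of Theorem \ref{th:broomhead}) are typically non-isolated. The references you point to for the general case do not supply what you need: \cite{IR} works under isolated/$d$-CY hypotheses and \cite{IyamaWemyssNCBO} concerns the noncommutative Bondal--Orlov conjecture. The machinery that replaces AR/CY duality in the non-isolated setting is developed in \cite{IW} itself, and is the actual content of the theorem being proved --- so as a self-contained argument your proposal establishes only the isolated-singularity case. (A smaller misdirection: Proposition \ref{prop:sufficient} gives $\Ext^{>0}_\Lambda(M,M)=0$ over $\Lambda$, whereas the cluster-tilting property of $\add M$ is governed by $\Ext^1_R(M,M)$; in the isolated case the correct input is in \cite{IR}, not that proposition.) The actual proof in \cite{IW} avoids the stable category altogether: it transports both approximation sequences through $\Hom_R(N,-)$ to the ring $\End_R(N)$ and uses depth-counting in dimension $3$, much as in the proof of Proposition \ref{prop:sufficient}, to show the dual of the right exchange sequence is a left exchange sequence.
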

\begin{example}[\protect{\cite{HiranoWemyss, IWWeb, WemyssLockdown}}] \label{ex:IW2}
If $R$ is complete local ring with a 3-dimensional terminal Gorenstein singularity then the 
basic reflexive modules  yielding an NCCR correspond to the maximal
cells in an affine hyperplane arrangement of dimension $\rk \operatorname{Cl}(R)$ with mutations at indecomposable summands corresponding to wall crossings \cite[Theorem 4.4]{WemyssLockdown}. The group $\operatorname{Cl(R)}$ acts by translation on this hyperplane arrangement and the quotient consists of a finite number
of cells which correspond to the NCCRs of $R$. The number of such NCCRs is generally higher than the number of crepant resolutions.

It is an interesting problem to understand this for other types of 3-dimensional singularities.
\end{example}
\begin{remark}
  If $\Lambda=\End_R(M)$ is an NCCR then because of the reflexive Morita equivalence $\Refe(\Lambda)=\Refe(R)$ the mutation procedure
  may also be defined on the level of reflexive $\Lambda$-modules (see \cite[\S5]{IR}). The resulting procedure also works for twisted NCCRs, where there is no reflexive Morita equivalence.
\end{remark}
We now describe a different point of view on mutations, taken from \cite{DWZ}.
For $Q$  a quiver with $n$ vertices let $\widehat{kQ}$ be the completion of the path algebra of $Q$ at path length.
A \emph{potential} $w\in \widehat{kQ}$ is a convergent sum of cycles considered up to rotation (or equivalently $w\in \widehat{kQ}/[\widehat{kQ},
\widehat{kQ}]\,\hat{}$). If $w$ is a potential then  $(\partial w)$ denotes the (completed) two sided ideal generated by the cyclic derivatives $\partial_x w$
of $w$ with respect to the arrows in $Q$, where for a cyclic path $m$ we have $\partial_x m:=\sum_{m=uxv}vu$ (note that this is invariant under path rotation). The \emph{completed Jacobi algebra}
associated to $(Q,w)$ is defined as $\hat{J}(Q,w):=\widehat{k Q}/(\partial w)$. We say that $w$ is \emph{reduced} if it only contains cycles of length $\ge 3$.
We can also consider the uncompleted version $J(Q,w):=k Q/(\partial w)$, in case $w$ is a finite sum. We have the following result.
\begin{theorem}[\protect{\cite[Theorems A\&B]{MR3338683}}] If $\Lambda$ is a basic (i.e.\ $\Lambda/\rad\Lambda\cong k^{\oplus n}$) twisted NCCR
  of a 3-dimensional normal Gorenstein complete local ring
then
  $\Lambda$ is a completed Jacobi algebra $\hat{J}(Q,w)$ with $w$ reduced.
\end{theorem}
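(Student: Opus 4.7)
The plan is to produce a complete quiver presentation $\Lambda\cong \widehat{kQ}/I$ and then show that $I$ is generated by the cyclic derivatives of a reduced potential $w$. Since $\Lambda$ is module-finite over the complete local ring $R$, idempotents lift from $\Lambda/\rad\Lambda\cong k^n$, so $\Lambda$ is semiperfect and decomposes as $\bigoplus_{i=1}^n \Lambda e_i$ with pairwise non-isomorphic summands (by basicness). Taking the vertices of $Q$ to be $\{e_i\}$ and the arrows $i\to j$ to be a $k$-basis of $e_j(\rad\Lambda/\rad^2\Lambda)e_i$, a standard complete-local version of Gabriel's theorem produces a surjection $\widehat{kQ}\twoheadrightarrow \Lambda$ with closed admissible kernel $I\subseteq \widehat{kQ}_{\ge 2}$.

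The heart of the argument is to establish a (twisted) bimodule 3-Calabi--Yau property. Since $R$ is 3-dimensional Gorenstein, $\Lambda$ is maximal Cohen--Macaulay of depth $3$ over $R$, and $\Lambda$ is reflexive Azumaya, one first shows that $\Hom_R(\Lambda,R)\cong {}_1\Lambda_\sigma$ as $\Lambda$-bimodules for a Nakayama automorphism $\sigma$; this uses the Gorenstein identification $\omega_R\cong R$ together with the reflexivity of $\Lambda$ so that $\Lambda$ is symmetric over $R$ up to the twist. Combined with $\gldim \Lambda = 3$ (via Auslander--Buchsbaum applied to finite-length $\Lambda$-modules), this yields a bimodule isomorphism
\[
\RHom_{\Lambda^e}(\Lambda,\Lambda^e)\cong {}_1\Lambda_\sigma[-3],
\]
i.e.\ $\Lambda$ is twisted bimodule 3-Calabi--Yau.

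From here I would invoke the Bocklandt--Van den Bergh reconstruction theorem for (twisted) 3-Calabi--Yau algebras. Its essence is that the minimal $\Lambda$-bimodule projective resolution has length $3$ and, because of CY self-duality, the differentials read off at each vertex $i$ set up a canonical bijection between arrows starting at $i$ and the relations involving $e_i$. This forces the relations to be packaged as cyclic derivatives of a single element $w$ in $\widehat{kQ}/\overline{[\widehat{kQ},\widehat{kQ}]}$, once the Nakayama twist $\sigma$ has been absorbed into a rescaling of the arrows. Since $I\subseteq \widehat{kQ}_{\ge 2}$ and $I$ is the closure of $(\partial_a w)_a$, any cycle of length $\le 2$ in $w$ would produce a relation of length $\le 1$, contradicting admissibility; hence $w$ contains only cycles of length $\ge 3$, i.e.\ $w$ is reduced.

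The main obstacle is the Calabi--Yau step in the twisted setting. The standard arguments for untwisted NCCRs $\Lambda=\End_R(M)$ exploit $R\in \add(M)$ to pass freely between $\Lambda$-bimodule and $R$-module dualities, but for a genuine twisted NCCR $\Lambda$ need not be Morita equivalent to $R$ anywhere, and one has to exhibit the bimodule CY structure directly, controlling the Nakayama automorphism $\sigma$ via the reflexive Azumaya (Brauer) class of $\Lambda$ in codimension one before extending across the singular locus using the Cohen--Macaulay property.
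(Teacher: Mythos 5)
The survey does not prove this theorem; it imports it from \cite{MR3338683}, so the only meaningful comparison is with the argument of that reference. Your outline does track its strategy: a minimal complete presentation $\widehat{kQ}/I$ with $Q$ the Gabriel quiver and $I$ admissible, the identification of the dualizing bimodule $\Hom_R(\Lambda,R)$ with an invertible bimodule (hence, for a basic algebra, with some ${}_1\Lambda_\sigma$), the resulting twisted bimodule $3$-Calabi--Yau property, a reconstruction theorem producing a superpotential, and reducedness of $w$ forced by $I\subseteq \widehat{kQ}_{\ge 2}$. Those steps, as far as you carry them out, are fine. Note, though, that the ``reconstruction theorem for complete (twisted) $3$-CY algebras'' you invoke \emph{is} Theorem A of the paper being cited, so your proposal is an outline of that paper's proof rather than an independent route.

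Two genuine gaps remain, and they are exactly where the cited paper does its real work. First, the reconstruction theorem does not apply to an arbitrary algebra satisfying $\RHom_{\Lambda^e}(\Lambda,\Lambda^e)\cong{}_1\Lambda_\sigma[-3]$: it requires the Calabi--Yau structure to be \emph{exact}, i.e.\ the duality class must lift to negative cyclic homology. It is not known that every complete $3$-CY algebra is exact, so for NCCRs exactness has to be exhibited directly from the Gorenstein structure of $R$ (local duality); your proposal passes from the bimodule isomorphism straight to the superpotential. Second, the phrase ``once the Nakayama twist $\sigma$ has been absorbed into a rescaling of the arrows'' conceals the main difficulty of the \emph{twisted} case. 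A nontrivial $\sigma$ a priori yields only a \emph{twisted} superpotential, whose relations are $\sigma$-twisted cyclic derivatives; that is weaker than the assertion $\Lambda\cong\hat{J}(Q,w)$ with ordinary cyclic derivatives. One must actually prove that $\sigma$ fixes the vertices (e.g.\ because the dualizing bimodule preserves each indecomposable projective up to isomorphism) and then genuinely untwist the superpotential; a rescaling of arrows does not do this in general. As written, your argument would establish at best that $\Lambda$ is a completed \emph{twisted} Jacobi algebra, modulo the unaddressed exactness issue.
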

If $Q$ does not have loops or $2$-cycles then the mutations of $\Lambda:=\hat{J}(Q,w)$ can be obtained
by an alternative procedure described in~\cite{DWZ}. 
The procedure to mutate at a vertex $i$ of $Q$ yields a new Jacobi algebra $\hat{J}(Q',w')$ defined as follows
(see \cite[\S2.4]{KY}).
\begin{enumerate}
  \item  For each arrow $\beta$ with target $i$ and each arrow $\alpha$ with source $i$, add a new
    arrow $[\alpha\beta]$ from the source of $\beta$ to the target of $\alpha$.
    \item
 Replace each arrow $\alpha$ with source or target $i$ with an arrow $\alpha^\ast$ in the opposite
direction.
\end{enumerate}
The new potential $w'$ is the sum of two potentials $w_1^\prime$ and $w_2^\prime$ . The potential $w_1^\prime$
is obtained from $w$ by replacing each composition $\alpha\beta$ (up to cyclic rotation) by~$[\alpha \beta]$, where $\beta$ is an arrow
with target $i$. The potential $w_2'$  is given by
\[
w_2^\prime =
\sum_{\alpha,\beta}
[\alpha \beta]\beta^\ast \alpha^\ast
\]
where the sum ranges over all pairs of arrows $\alpha$ and $\beta$ such that $\beta$ ends at $i$ and $\alpha$
starts at $i$.  It follows from \cite[Theorem 3.2]{KY} that this mutation coincides with the mutation defined in \cite{IW} and described above.

\medskip

It may be that $w'$ is not reduced, i.e.\ it contains $2$-cycles. In that case the corresponding relations allow one to eliminate some arrows in $Q'$.
By doing this we find that the Jacobi algebra $\hat{J}(Q',w')$ can be more economically written as
$J((Q')^{\text{red}},(w')^{\text{red}})$ where~$(w')^{\text{red}}$ is reduced.

If we are lucky that $(Q')^{\text{red}}$ does not contain any 2-cycles (it cannot contain loops) then we can repeat the mutation procedure at arbitrary
vertices. If we can
keep doing this forever then we call the original potential $w$ \emph{non-degenerate}.

Note that if $(Q')^{\text{red}}$ does not contain 2-cycles, it can be
obtained from $Q'$ by simply deleting all $2$-cycles, so that the
mutation procedure becomes to some extent combinatorial \cite{KellerMutations}. For a
non-degenerate potential this nice property persists under iterated
mutations. The catch however is that in general it is not clear 
how to check that a potential is non-degenerate. A useful criterion, based on the theory of graded mutations \cite{MR3291645}, is given in \cite{VdBdT2}.
\begin{theorem}[\protect{\cite[Corollary 1.3]{VdBdT2}}] \label{th:graded}
  Assume:
  \begin{enumerate}
    \item
      $Q$ is a $\ZZ$-graded quiver such that $(kQ)_{\le 0}$ is finite dimensional.
    \item $Q$ has at least three vertices.
    \item  $w$ is a homogeneous reduced potential of degree $r$ (in particular
      it is a finite sum).
    \item $\Lambda=J(Q,w)$ is a twisted NCCR whose center is 3-dimensional with an isolated singularity.
    \item \label{it:automatic} $\Lambda/[\Lambda,\Lambda]$ does not contain any elements whose degree is in the interval $[1,r/2]$.
    \end{enumerate}
    Then $w$ is non-degenerate.
  \end{theorem}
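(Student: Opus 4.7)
The strategy is to apply the framework of graded mutations developed in \cite{MR3291645}. Since $w$ is homogeneous of degree $r$, the $\ZZ$-grading on $Q$ propagates through each mutation in a canonical way, producing graded quivers $(Q^{(k)}, w^{(k)})$ along any mutation sequence, with each $w^{(k)}$ still homogeneous of degree $r$. The hypothesis that $(kQ)_{\le 0}$ is finite dimensional ensures that the graded structure is preserved through the reduction process (the terms that may be eliminated in passing from $w^{(k)}$ to $(w^{(k)})^{\text{red}}$ live in bounded degree ranges), so the mutation procedure of \cite{DWZ} can be carried out in the graded category.

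I would then argue by contradiction. Suppose that for some sequence of mutations the reduced quiver $(Q^{(k)})^{\text{red}}$ acquires a 2-cycle consisting of arrows $\alpha : i \to j$ and $\beta : j \to i$. Because $w^{(k)}$ is reduced and the arrows $\alpha, \beta$ survive, the composition $\alpha\beta$ (and $\beta\alpha$) is a non-zero homogeneous cyclic element of $\Lambda^{(k)} = J(Q^{(k)}, w^{(k)})$ of total degree $\deg(\alpha) + \deg(\beta)$. Using Theorem \ref{th:iw} together with the graded relative Serre duality on $\mod(\Lambda^{(k)})$ that is available because $Z(\Lambda)$ is 3-dimensional Gorenstein with isolated singularity, one obtains a homogeneous pairing between the simple $\Lambda^{(k)}$-modules at $i$ and $j$ whose degree is $r$; tracing this through the arrows gives $\deg(\alpha)+\deg(\beta)=r$. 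By symmetry, one of $\deg(\alpha), \deg(\beta)$ lies in $[1,\lfloor r/2\rfloor]$.

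The next step, which I expect to be the main obstacle, is to show that $\Lambda^{(k)}/[\Lambda^{(k)}, \Lambda^{(k)}]$ and $\Lambda/[\Lambda, \Lambda]$ coincide as graded vector spaces (possibly up to a predictable set of degree shifts controlled by the mutation data). The derived equivalences between $\Lambda^{(k)}$ and $\Lambda$ furnished by the Iyama--Wemyss mutation theorem induce such an identification on Hochschild homology in degree zero, but one needs to promote the equivalence to the graded level, which requires checking that the tilting bimodule is itself homogeneous and tracking its internal degree. Granting this, the non-zero cyclic class of degree in $[1,\lfloor r/2\rfloor]$ produced above yields a non-zero element of $\Lambda/[\Lambda, \Lambda]$ in the forbidden degree range, contradicting hypothesis (\ref{it:automatic}).

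Thus no iterated mutation can produce a 2-cycle in the reduced quiver, so the procedure can be continued indefinitely and $w$ is non-degenerate. The hypothesis that $Q$ has at least three vertices intervenes in the Serre-duality step (to avoid trivial configurations with only two vertices in which the duality degenerates), while the isolated-singularity assumption ensures that the relevant $\Ext$-pairings in $\mod(\Lambda^{(k)})$ remain finite-dimensional and the AR-style duality is available.
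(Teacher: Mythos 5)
The survey itself does not prove this theorem---it is quoted verbatim from \cite[Corollary 1.3]{VdBdT2}---so your argument can only be measured against what a correct proof must accomplish. Your overall skeleton is the right one: propagate the grading through Amiot--Oppermann graded mutations, observe that $\Lambda/[\Lambda,\Lambda]$ with its grading is invariant under the resulting graded derived equivalences, and use hypothesis (5) to forbid surviving $2$-cycles. The step you single out as ``the main obstacle'' is in fact unproblematic: $HH_0$ is a graded derived invariant, and cyclic words are insensitive to the vertex-wise internal degree shifts introduced by the mutation tilting modules, so condition (5) does pass to every $\Lambda^{(k)}$.

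The genuine gap is in the step where you extract a contradiction from a surviving $2$-cycle $\alpha\colon i\rightarrow j$, $\beta\colon j\rightarrow i$. First, the graded Calabi--Yau/Serre duality pairs $\Ext^1(S_i,S_j)$ (arrows $i\rightarrow j$) with $\Ext^2(S_j,S_i)$ (minimal relations, i.e.\ the cyclic derivatives $\partial_\gamma w$), \emph{not} with $\Ext^1(S_j,S_i)$; the resulting degree bookkeeping is automatically self-consistent and imposes no relation between $\deg\alpha$ and $\deg\beta$, so the asserted identity $\deg\alpha+\deg\beta=r$ does not follow. Second, even granting that identity, you cannot feed ``one of $\deg\alpha,\deg\beta$ lies in $[1,r/2]$'' into hypothesis (5): a single arrow with $i\neq j$ vanishes in $\Lambda/[\Lambda,\Lambda]$ (indeed $\alpha=[e_j,\alpha]$), and the only class the $2$-cycle produces there is that of the necklace $\alpha\beta$, whose degree would be $r\notin[1,r/2]$ on your own account---so no contradiction with (5) results. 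Third, the nonvanishing of that class in $\Lambda^{(k)}/[\Lambda^{(k)},\Lambda^{(k)}]$ is asserted but never justified: a composite of arrows can perfectly well die in the Jacobi algebra, let alone modulo commutators. What is missing is the actual mechanism tying a $2$-cycle that survives reduction to a provably nonzero class of $HH_0$ whose degree (or whose Calabi--Yau--dual partner's degree) is forced into $[1,r/2]$; that is the heart of the cited result, and your write-up does not supply it. (Minor further points: under graded mutation the degrees of the reversed arrows $\alpha^\ast$ involve a choice, and one must check that homogeneity of degree $r$ and the finiteness of $(kQ)_{\le 0}$ persist; and the role of the three-vertex hypothesis---with two vertices the quiver of such an NCCR necessarily carries a $2$-cycle, so mutation is never defined---deserves more than the vague appeal to ``degenerate duality''.)
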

Note that \eqref{it:automatic} is automatic if $r=1$. This gives an alternative proof why the potentials associated to ``rolled up helix algebras''
of Del Pezzo surfaces are non-degenerate (see \cite[Theorem 1.7]{BridgelandStern}\cite[Theorem 4.2.1]{VdBdT2}).
Theorem \ref{th:graded} also applies to many skew group rings $\Lambda=k[x,y,z]\#(\ZZ/n\ZZ)$. For example $n=5$ and $\bar{1}$ acting with weights $(1/5,2/5,2/5)$
(see \cite[\S7]{IR}).

\section{Quotient singularities for reductive groups}
\label{sec:quotient}
\subsection{NCCRs via modules of covariants}
\label{ssec:covariants}
In this section we discuss some results from \cite{SVdB}.
$G$ will always be a reductive group. Let $S$ be the coordinate ring of a smooth affine $G$-variety $X$. Then $S^G$ is the coordinate ring of the categorical
quotient $X\quot G$.
We will be interested
in constructing (twisted) NCCRs for $S^G$. In the particular case that $G$ is finite and $X$ is  a faithful unimodular $G$-representation
then this was discussed in \S\ref{ssec:qs}. An NCCR for $S^G$ is given by the skew group ring $\Lambda=S\# G$.
However $\Lambda$ can be described in a different way. For $U$ a finite dimensional $G$-representation put $M(U):=(U\otimes S)^G$.
Then $M(U)$ is a reflexive $S^G$ module (in fact it is maximal Cohen-Macaulay). If every irreducible representation of $G$ occurs at least once in $U$ then
$\Lambda$ is Morita equivalent to $\End_{S^G}(M(U))$. Hence $M(U)$ defines an NCCR of $S^G$.

\medskip

The modules $M(U)$ we introduced are so-called \emph{modules of
  covariants} \cite{Brion2} and they make perfect sense for general reductive groups.
A mild obstacle is that modules of covariants do
not have to be reflexive in general \cite{Brion2}.
This is not a serious problem, but if we want to avoid it anyway, we can
restrict the pairs $(G,X)$ we consider. We will say that $G$ acts
\emph{generically} on a smooth affine variety if the locus
of  points with closed orbit and trivial stabilizer is non-empty and its complement
has codimension $\ge 2$.
If $W$ is a $G$-representation then we will say that $(G,W)$ is \emph{generic} if $G$ acts generically on
$\Spec \Sym W\cong W^\ast$.
We then have
in particular 
\begin{equation}
  \label{eq:modcov}
\End_{S^G}(M(U))=M(\End(U)).
\end{equation}

It is reasonable to search for NC(C)Rs of the form
$\End_{S^G}(M(U))$. However if~$G$
is not finite there are non-trivial obstacles:
\begin{enumerate}
\item There are an infinite number irreducible representations so we cannot just take the
  sum of all of them. We need to make a careful selection.
\item Modules of covariants are usually not Cohen-Macaulay and so demanding that $\End_{S^G}(M(U))=M(\End(U))$
(cfr \eqref{eq:modcov}) is Cohen-Macaualay is a severe restriction on $U$.
\end{enumerate}
The first issue is handled in \cite[\S10]{SVdB} where we construct certain nice complexes 
relating different modules of covariants (see also \cite[Chapter 5]{weyman2003cohomology}). 
The second
issue is handled using results from \cite{VdB3} (see also \cite{Vdb1,Vdb7,VdB9,VdB4}).

\medskip

Before we discuss NCCRs let's give a result on NCRs.
\begin{proposition}[\protect{\cite[Corollary 1.3.5]{SVdB}}]
Assume that $(G,W)$ is generic.  Then there exists a finite
dimensional $G$-representation $U$ containing the trivial
representation such that $\Lambda=\End_{S^G}(M(U))$ is an NCR for
$S^G$.
\end{proposition}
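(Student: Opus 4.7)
My plan is to produce $U$ by working on the smooth quotient stack $\Xscr=[X/G]$, and then to control the global dimension of $\Lambda=\End_{S^G}(M(U))$ by transferring the question to $\Xscr$. For any finite-dimensional $G$-representation $V$, set $\Tscr_V:=V\otimes\Oscr_\Xscr$, so that $\pi_\ast\Tscr_V=M(V)$ under the good quotient map $\pi:\Xscr\rightarrow\Spec S^G$. Because $(G,W)$ is generic, $\pi$ is an isomorphism outside a closed substack of codimension $\ge 2$, so every $M(V)$ is reflexive and the canonical map $\End_\Xscr(\Tscr_U)\rightarrow\End_{S^G}(M(U))$ is an isomorphism; this is exactly the identification \eqref{eq:modcov}. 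A bound on $\gldim\Lambda$ is therefore equivalent to a statement on $\Xscr$.

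First I would select $U$ as the sum of one representative of each isomorphism class of irreducible $G$-representation whose highest weight lies in a bounded polytope $\Pi\subset X(T)_\RR$, where $T\subset G$ is a maximal torus and $\Pi$ will be tailored to the $T$-weights of $W$. Insisting that $0\in\Pi$ guarantees that the trivial representation is a summand, so that the corresponding idempotent $e_0\in\Lambda$ realizes $S^G\cong e_0\Lambda e_0$ as a corner of $\Lambda$.

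The key homological input is then the Koszul-type machinery developed in \cite[\S10]{SVdB}, which, provided $\Pi$ is taken large enough, expresses each $\Tscr_V$ (for an arbitrary irreducible $V$) as a bounded complex of $G$-equivariant bundles whose terms are direct summands of $\Tscr_U^{\oplus N}$ on $\Xscr$. Applying $R\Hom_\Xscr(\Tscr_U,-)$ and using the smoothness of $\Xscr$, every finitely generated $\Lambda$-module then acquires a bounded projective resolution of length at most $\dim W$, so $\gldim\Lambda<\infty$ and $\Lambda$ is an NCR of $S^G$.

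The main obstacle lies in the combinatorial selection of $\Pi$: it must be large enough to absorb all the irreducibles that appear in the \cite[\S10]{SVdB} resolutions, yet be finite and uniform, and the polytope that achieves this must be described in terms of the weight diagram of $W$. Carrying out this polytope bookkeeping is the technical heart of the argument; the fact that we control the global dimension but not the depth of $\Lambda$ over $S^G$ is precisely why this construction yields an NCR rather than an NCCR, and is why stronger hypotheses will be needed to upgrade the conclusion.
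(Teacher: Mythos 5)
The survey itself offers no proof of this proposition: it cites \cite[Corollary 1.3.5]{SVdB} and summarizes the method in one line (construct the ``nice complexes'' of \cite[\S10]{SVdB} relating modules of covariants, and take $U$ ``big enough''). Your sketch tracks that strategy faithfully: the identification $\End_{S^G}(M(U))\cong M(\End(U))=\End_{[X/G]}(U\otimes\Oscr)$ via genericity, the choice of $U$ as the sum of irreducibles with highest weight in a bounded region containing $0$ (so that $S^G$ is a corner of $\Lambda$ and one gets a categorical resolution via Lemma \ref{ref-1.1.2-2}), and the use of the complexes of loc.\ cit.\ to resolve each $V(\mu)\otimes S$ by objects of $\add(U\otimes S)$.

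There are, however, two points to flag. First, what you call the ``polytope bookkeeping'' is not a technicality to be deferred -- it \emph{is} the cited theorem. A single application of the complexes of \cite[\S10]{SVdB} to $V(\mu)\otimes S$ produces terms indexed by weights that in general lie outside any prescribed bounded region $\Pi$; the substance of Corollary 1.3.5 is that for a suitable finite set of weights (satisfying the separation/closure conditions of \cite[\S1.3]{SVdB}) the iteration of these replacements terminates. As written, your argument assumes exactly this, so it is an accurate reading of the strategy rather than a proof. Second, the inference from ``each $\Tscr_V$ has a finite $\add(\Tscr_U)$-resolution'' to ``$\gldim\Lambda<\infty$'' is correct but skips a step: given a finitely generated $\Lambda$-module $N$, one writes $N\cong\Hom_{\mod_G(S)}(U\otimes S,C)$ for some $C$, resolves $C$ by projectives of $\mod_G(S)$ (a resolution of length $\le\dim W$, since $G$ is reductive), and then splices in the finite $\add(U\otimes S)$-resolutions of those projectives, using exactness of $(-)^G$. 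In particular the asserted bound $\gldim\Lambda\le\dim W$ does not follow -- the lengths of the auxiliary resolutions add to the Koszul length -- though the finiteness conclusion, which is all that is needed, does.
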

\begin{remark} The fact that $U$ contains the trivial representation implies that $\Lambda$ defines a categorical resolution by Lemma \ref{ref-1.1.2-2}.
It turns out that NCRs are easier to construct than NCCRs since it is
sufficient to
take $U$ big enough, in a suitable sense.
\end{remark}
To state our results about (twisted) NCCRs we need to introduce some notation.
Let $G$ be a connected\footnote{In \cite{SVdB} we also consider the non-connected case.} reductive group. Let
$T\subset B\subset G$ be respectively a maximal torus and a Borel
subgroup of $G$ with $\Wscr=N(T)/T$ being the corresponding Weyl group. Put
$X(T)=\Hom(T,G_m)$ and let $\Phi\subset X(T)$ be the roots of $G$.  
By convention
the roots of~$B$ are the negative roots $\Phi^-$ and $\Phi^+=\Phi-\Phi^-$ is the set
of  positive roots. 
We write $\bar{\rho}\in X(T)_\RR$ for half the sum of the positive roots.
 Let $X(T)^+_\RR$ be the dominant cone in $X(T)_\RR$ and let $X(T)^+=X(T)^+_\RR\cap X(T)$ be the set of dominant
weights. For $\chi\in X(T)^+$ we denote the simple
$G$-representation with highest weight~$\chi$ by~$V(\chi)$.

Let $W$ be a finite dimensional $G$-representation of dimension $d$ and put $S=\Sym(W)$, $X=\Spec \Sym(W)=W^\ast$.
Let 
$(\beta_i)_{i=1}^d\in X(T)$ be the $T$-weights of $W$. 

 Put
\begin{align*}
\Sigma&=\left\{\sum_i a_i \beta_i\mid a_i\in ]-1,0]\right\}\subset X(T)_\RR.
\end{align*}
The elements of the intersection
$
X(T)^+\cap(-2\bar{\rho}+\Sigma)
$ 
are called \emph{strongly critical (dominant) weights} for $G$.
\begin{theorem}[\protect{\cite[Theorem 3.4.3]{SVdB}\cite{VdB3}}] \label{th:strongly}
  Assume that $X$ contains a point with closed orbit and finite stabilizer. Let $\chi\in X(T)^+$ be a strongly critical weight and $U=V(\chi)$. Then 
$M(U^\ast)$ is a Cohen-Macaulay $S^G$-module.
\end{theorem}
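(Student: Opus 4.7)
My plan is to realise $M(V(\chi)^\ast)$ as global sections of a line bundle on $G/B\times X$, then use a Kempf--Weyman style Koszul computation to reduce the Cohen--Macaulay property to Borel--Weil--Bott vanishing on $G/B$, where the strongly critical condition is exactly designed to make all required vanishings hold.

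First I would use Borel--Weil to identify $V(\chi)$ with $H^0(G/B,\Lscr_\chi)$ (with Kempf vanishing killing higher cohomology). Letting $Y=G/B\times X$ with $G$ acting diagonally, and $p\colon Y\to X$ the projection, flat base change gives $Rp_\ast\Lscr_\chi\simeq V(\chi)\otimes \Oscr_X$, so taking $G$-invariants of global sections yields $M(V(\chi)^\ast)=\bigl(\Gamma(Y,\Lscr_\chi)\bigr)^G$. In particular $M(V(\chi)^\ast)$ is the $R$-module of global sections of a line bundle on the smooth stack $[Y/G]$, which maps properly to $\Spec R$; the hypothesis on closed orbits with finite stabilizer ensures this map is generically finite of the correct dimension and that $\dim\Spec R=\dim W-\dim G$.

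Next I would translate Cohen--Macaulayness into cohomological vanishing. If $\pi\colon [Y/G]\to \Spec R$ is the projection, then MCM-ness of $\pi_\ast\Lscr_\chi=M(V(\chi)^\ast)$ follows (via local duality) from $R^i\pi_\ast\Lscr_\chi=0$ for $i>0$. Using a Koszul-type resolution of $\Oscr_{[Y/G]}$ (relating it to the exterior algebra on $W^\ast$), this further reduces to verifying
\[
H^i(G/B,\Lscr_{\chi+\lambda})=0\qquad\text{for all } i>0,
\]
where $\lambda$ ranges over all weights appearing in $\Lambda^\bullet W$, equivalently $\lambda=\sum_{j\in J}\beta_j$ with $J\subseteq\{1,\dots,d\}$.

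Finally, this is a purely representation-theoretic check: by Borel--Weil--Bott, the above vanishing holds whenever $\chi+\lambda+\bar\rho$ is regular (i.e.\ avoids the walls of the Weyl chamber). Writing the strongly critical weight as $\chi=-2\bar\rho+\sum a_i\beta_i$ with $a_i\in\,]-1,0]$ and adding any $\sum_{j\in J}\beta_j$ yields $-2\bar\rho+\sum c_i\beta_i$ with $c_i\in\,]-1,1]$; translating by $\bar\rho$ one sees that $\chi+\lambda+\bar\rho$ lies in a shifted region designed precisely to avoid every root hyperplane, so that Borel--Weil--Bott gives the desired vanishing. The main obstacle is this last step: verifying that the shifted polytope really does avoid all reflection hyperplanes requires a careful analysis of the interplay between the weights $\beta_i$ of $W$ and the positive roots $\Phi^+$ of $G$. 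This is where the \emph{strict} bound $a_i>-1$ in the definition of $\Sigma$ becomes essential, and where the finite-stabilizer hypothesis is used to guarantee the generic position needed for all the cohomological identifications above to hold.
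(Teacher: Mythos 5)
Your proposal has a genuine gap at its central step, the reduction of Cohen--Macaulayness to higher direct image vanishing. The morphism $[(G/B\times X)/G]\simeq [X/B]\to \Spec S^G$ is neither proper nor generically finite: $X\to X\quot G$ is a good quotient with positive-dimensional fibres, not a proper map, and the relative dimension of $[X/B]$ over $\Spec S^G$ is $\dim G-\dim B=\dim G/B>0$ whenever $G$ is not a torus. So the local/Grothendieck duality you invoke is not available, and even for a proper generically finite map the implication ``$R^{>0}\pi_\ast\Lscr=0\Rightarrow\pi_\ast\Lscr$ is maximal Cohen--Macaulay'' is false as stated --- one must also control $R\pi_\ast$ of the Serre-dual twist $\Lscr^{\vee}\otimes\omega$. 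The entire difficulty of the theorem sits exactly in this non-properness: Cohen--Macaulayness of $M(U^\ast)$ is equivalent to the vanishing of the local cohomology $H^i_{\pi^{-1}(0)}(U\otimes S)^G$ (support in the nullcone) for $i$ below $\dim S^G$, and the proofs the paper cites (\cite[Theorem 3.4.3]{SVdB}, \cite{VdB3}) attack precisely this: they stratify the unstable locus \`a la Kempf--Hesselink, compute the $T$-weights of the local cohomology of $S$ with support in the strata, and show that the strongly critical condition on $\chi$ forces the $V(\chi)$-isotypic components to vanish in the relevant range. Your argument never meets this issue.

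Two further steps are also not correct as written. First, there is no subvariety of $G/B\times W^\ast$ being resolved here, so the ``Koszul resolution'' has nothing to resolve; in fact $R\Gamma([X/B],\Lscr_\chi)=S\otimes R\Gamma(G/B,\Lscr_\chi)$ has no higher cohomology for \emph{any} dominant $\chi$ by Kempf vanishing, so your proposed reduction produces no condition at all and in particular cannot single out the strongly critical weights. (This is in contrast with the determinantal Example \ref{ex:textbook}, where a genuine Springer-type subbundle exists and the Kempf--Weyman Koszul method does apply.) Second, the Borel--Weil--Bott endgame is misquoted: regularity of $\chi+\lambda+\bar{\rho}$ gives concentration of cohomology in a single, possibly positive, degree $\ell(w)$; vanishing of higher cohomology requires dominance of $\chi+\lambda$, and total vanishing requires $\chi+\lambda+\bar{\rho}$ to be singular. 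The interplay between $-2\bar{\rho}+\Sigma$ and the root hyperplanes is therefore not the ``designed to avoid every wall'' statement you assert, and the finite-stabilizer hypothesis enters through a dimension count for the unstable strata, not through generic finiteness of $[X/B]$ over $\Spec S^G$.
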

If we look at \eqref{eq:modcov} and observe that  the weights of $\End(U)$  are very roughly speaking about twice those of $U$, then Theorem \ref{th:strongly} suggests
that to
construct an NCCR we should restrict ourselves to representations whose highest weights are approximately contained in $-\bar{\rho}+(1/2) \Sigma$. This idea
works for the class of ``quasi-symmetric'' representations, which includes the class of self dual representations.

We say that $W$
is \emph{quasi-symmetric} if for every line $\ell\subset X(T)_\RR$ through
the origin we have 
$
\sum_{\beta_i\in\ell}\beta_i=0
$.
This implies in particular that $W$ is unimodular
and hence~$S^G$ is Gorenstein if $W$ is generic by a result of Knop \cite{Knop3}. 

\medskip

From now on we assume that $(G,W)$ is generic and $W$ is quasi-symmetric. Deviating slightly from \cite{SVdB}, following \cite{HLSam}, we introduce a certain affine hyperplane arrangement
on $X(T)^{\Wscr}_\RR$. Let $\bar{\Hscr}$ be the collection of affine hyperplanes spanned by the facets of $-\bar{\rho}+(1/2)\bar{\Sigma}$. We consider
the hyperplane arrangement in\footnote{Note that $X(T)^\Wscr$ is just the character group $X(G)$ of $G$.} $X(T)^\Wscr_\RR$ given by:
\begin{equation}
  \label{eq:hyper}
  \Hscr=\bigcup_{H\in \bar{\Hscr}} (-H+X(T))\cap X(T)^\Wscr_\RR.
\end{equation}
\begin{remark} The hyperplane arrangement \eqref{eq:hyper} may be degenerate in the sense that $X(T)^\Wscr_\RR \subset -H+\chi$ for some $\chi \in X(T)$.
\end{remark}
\begin{example}
 We give a simple example where degeneration occurs. Let  $G=\Sl(2)$. If $V$ is the standard
  representation and $W=V^{n}$ with $n$ even, then $(1/2)\Sigma$ is the interval
  $]-n/2, n/2[$ (identifying $X(T)\cong \ZZ$). Moreover $\bar{\rho}=1$.
  Hence the ``hyperplanes'' $-H+X(T)$ are given by the integers. Furthermore
  $X(T)^\Wscr_\RR=0$. Thus the induced hyperplane arrangement in $X(T)^\Wscr_\RR$ is indeed degenerate. If $n$ is odd on the other hand then it is non-degenerate.
  See \cite[Theorem 1.4.5]{SVdB} for a complete treatment of the case
  $G=\Sl(2)$.
\end{example}

This hyperplane arrangement is such that if $\delta$ is the complement of $\Hscr$ then
  \[
(-\bar{\rho}+\delta+1{/}2\,\partial\bar{\Sigma})\cap X(T)=\emptyset.
\]
The following result is a slight variation on \cite[Theorem 1.6.4]{SVdB}.
\begin{theorem} \label{th:mainth}
  Let $(G,W)$ be generic and assume that $W$ is quasi-symmetric.
  Let~$\delta$ be an element of the complement of $\Hscr$. Put 
\begin{align}
    \Lscr_\delta&=X(T)^+\cap (-\bar{\rho} +\delta +(1/2)\bar{\Sigma}), \label{eq:Ldelta}\\
  U_\delta&=\bigoplus_{\chi \in \Lscr_\delta} V(\chi), \label{eq:Udelta}\\
  \Lambda_\delta &=\End_{S^G}(M(U_\delta)).
\end{align}
If $\Lscr_\delta\neq \emptyset$ then $\Lambda$ is an NCCR for $\Sym(W)^G$. 
\end{theorem}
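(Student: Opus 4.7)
The plan is to verify the three defining properties of an NCCR for $\Lambda_\delta$: reflexivity as an $S^G$-algebra, Cohen-Macaulayness over $S^G$, and finite global dimension.

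Reflexivity is immediate: the genericity of $(G,W)$ together with \eqref{eq:modcov} gives $\Lambda_\delta = \End_{S^G}(M(U_\delta)) = M(\End(U_\delta))$, a module of covariants for a generic action, hence reflexive.

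For Cohen-Macaulayness, decompose $\End(U_\delta) = U_\delta^{\ast}\otimes U_\delta = \bigoplus_\mu m_\mu V(\mu)$, so that $\Lambda_\delta = \bigoplus_\mu m_\mu M(V(\mu))$. By Theorem~\ref{th:strongly} it suffices to check that $-w_0\mu$ is strongly critical for each $\mu$ appearing, i.e.\ that $-w_0\mu + 2\bar\rho \in \Sigma$. Such a $\mu$ is a dominant weight of $V(\chi_1)^{\ast}\otimes V(\chi_2)$ for some $\chi_1,\chi_2 \in \Lscr_\delta$, and the weights of this tensor product all lie in the Minkowski sum of the two weight polytopes. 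Using $\chi_i \in -\bar\rho+\delta+(1/2)\bar\Sigma$, $w_0\delta = \delta$, $-w_0\bar\rho = \bar\rho$, and the $-w_0$-invariance of $\bar\Sigma$ (which follows from the Weyl-invariance of the weight multiset of $W$ combined with the quasi-symmetry identity $-\bar\Sigma = \bar\Sigma$), a convex-geometric computation bounds $-w_0\mu + 2\bar\rho$ inside the closed zonotope $\bar\Sigma$. The genericity condition $\delta \notin \Hscr$ is exactly what excludes the lattice weight $-w_0\mu + 2\bar\rho$ from $\partial\bar\Sigma$, upgrading the conclusion to $-w_0\mu+2\bar\rho \in \Sigma$.

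For finite global dimension, the indecomposable projective $\Lambda_\delta$-modules correspond to the summands $M(V(\chi))$ with $\chi \in \Lscr_\delta$. Using the family of Koszul-type complexes built in \cite[\S10]{SVdB} --- obtained by twisting the Koszul resolution of $k$ over $\Sym(W)$ by an arbitrary simple $V(\chi)$ and decomposing each term $V(\chi)\otimes\wedge^j W$ into $G$-isotypic components --- one obtains exact sequences that express $M(V(\chi))$ in terms of $M(V(\chi'))$ for various $\chi'$. Inductively on a suitable height function measuring distance from $\Lscr_\delta$, one resolves every $M(V(\chi))$ by a finite complex whose terms lie in $\add M(U_\delta)$. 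The hypothesis $\delta \notin \Hscr$ is what forces this induction to terminate: a wall of $\Hscr$ is precisely the locus where a simple summand of $V(\chi)\otimes\wedge^j W$ would land on a facet of $-\bar\rho+\delta+(1/2)\bar\Sigma$, and avoiding $\Hscr$ keeps the combinatorics strictly decreasing. Combined with Cohen-Macaulayness this yields $\gldim \Lambda_\delta < \infty$ and hence that $\Lambda_\delta$ is an NCCR.

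The main obstacle is this last step: the delicate combinatorial bookkeeping that controls how the Koszul-type resolutions interact with the hyperplane arrangement $\Hscr$ --- verifying that every summand outside $\Lscr_\delta$ is resolved with strictly decreasing complexity so that the induction terminates --- constitutes the technical heart of the argument.
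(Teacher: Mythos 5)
Your overall architecture is the right one, and it is the one the survey itself points to: the paper gives no proof of Theorem \ref{th:mainth} but defers to \cite[Theorem 1.6.4]{SVdB}, whose strategy is exactly (i) Cohen--Macaulayness of $M(\End(U_\delta))$ via the strongly-critical-weight criterion of Theorem \ref{th:strongly}, and (ii) finite global dimension via the complexes of \cite[\S 10]{SVdB} and an induction whose termination is governed by $\delta\notin\Hscr$. Your Cohen--Macaulay half is essentially complete: the shift-by-$2\bar\rho$ trick, the $\Wscr$-stability and central symmetry of $\bar\Sigma$, the fact that a dominant weight below a dominant weight lies in the convex hull of the latter's Weyl orbit, and the observation that $\delta\notin\Hscr$ pushes the lattice weights into the open part of the zonotope, together do give $-w_0\mu+2\bar\rho\in\Sigma$ for every dominant constituent $\mu$ of $\End(U_\delta)$, so that Theorem \ref{th:strongly} applies summand by summand.

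The finite-global-dimension half, however, contains a genuine gap, and not only because you defer the ``combinatorial bookkeeping.'' The tool you describe --- tensoring the Koszul resolution of $k$ over $\Sym(W)$ with $V(\chi)$ and decomposing the terms $V(\chi)\otimes\wedge^jW$ into isotypic components --- is not the construction of \cite[\S 10]{SVdB}, and it cannot drive the induction you propose: the positive-degree terms of that complex contain constituents $V(\chi')$ with $\chi'$ \emph{larger} than $\chi$ (e.g.\ $\chi'=\chi+\beta_i$ for $\beta_i$ a highest weight of $W$), so no height function measuring distance from $\Lscr_\delta$ decreases along it and the induction cannot terminate. The complexes actually used are obtained by pushing forward twisted Koszul complexes from (sub-bundles of) $G/B\times W^\ast$ and invoking Bott vanishing; their terms involve only the dominantizations of the weights $\lambda+\sum_{i\in I}\beta_i$, which is precisely what confines everything to translates of the zonotope $\bar\Sigma$ and makes the interaction with $\Hscr$ meaningful. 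Two smaller points: resolving each $M(V(\chi))$ by a finite complex with terms in $\add M(U_\delta)$ gives finite projective dimension for those particular $\Lambda_\delta$-modules, and one still needs the (standard but non-vacuous) step passing from this to $\gldim\Lambda_\delta<\infty$; and Cohen--Macaulayness plays no role in that deduction, contrary to your closing sentence.
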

It is easy to see that $\Lscr_\delta$ and hence $U_\delta$ depend only on the connected component of the complement of $\Hscr$ to which $\delta$ belongs.

\medskip

We obtain some evidence for Conjecture \ref{conj:nccrtoric}.\footnote{This is stated in loc.\ cit.\ for $W$ generic. However one easily reduces to this case.}
\begin{corollary}[\protect{\cite[Theorem 1.6.2]{SVdB}}]  \label{cor:162}
If $G=T$ is a torus and $W$ is quasi-symmetric then $\Sym(W)^T$ has a (toric) NCCR. 
\end{corollary}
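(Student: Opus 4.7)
The plan is to specialize Theorem \ref{th:mainth} to the torus case and observe that the resulting module has a toric form.

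First I would reduce to the case $(T,W)$ generic, as the paper indicates this reduction is easy. For a torus $T$, the root system $\Phi$ is empty, so $\bar\rho=0$; the Weyl group $\Wscr$ is trivial, so $X(T)^\Wscr=X(T)$ and $X(T)^+=X(T)$; and the simple $T$-representations are the one-dimensional characters $V(\chi)=k_\chi$. With these simplifications, the recipe of Theorem \ref{th:mainth} becomes $\Lscr_\delta=X(T)\cap(\delta+(1/2)\bar\Sigma)$ and $U_\delta=\bigoplus_{\chi\in \Lscr_\delta}k_\chi$.

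Next I would check that for a suitable $\delta$ in the complement of $\Hscr$, the set $\Lscr_\delta$ is non-empty. Since $(T,W)$ is generic, $T$ acts faithfully on $W$, so the weights $(\beta_i)_i$ span $X(T)_\RR$; consequently $\bar\Sigma$ is a full-dimensional zonotope with non-empty interior. The set of $\delta\in X(T)_\RR$ such that $0$ lies in the interior of $\delta+(1/2)\bar\Sigma$ is therefore non-empty and open, and meets the dense complement of $\Hscr$. Choosing such a $\delta$ gives $0\in \Lscr_\delta$, and Theorem \ref{th:mainth} then provides an NCCR $\Lambda_\delta=\End_{\Sym(W)^T}(M(U_\delta))$.

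Finally I would verify toricity. For each character $\chi$, the module of covariants $M(k_\chi)=(k_\chi\otimes\Sym(W))^T$ is the $\chi$-isotypic component of $\Sym(W)$, a rank-one reflexive $\Sym(W)^T$-module, hence isomorphic to a divisorial ideal. Consequently $M(U_\delta)=\bigoplus_{\chi\in\Lscr_\delta}M(k_\chi)$ is a direct sum of ideals, which makes $\Lambda_\delta$ a toric NCCR in the sense of Theorem \ref{th:broomhead}. The only subtle point is the non-emptiness of $\Lscr_\delta$, and this reduces to the fact that genericity forces the weights of $W$ to span $X(T)_\RR$; everything else is a direct specialization.
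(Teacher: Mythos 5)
Your proposal is correct and is exactly the intended argument: the paper derives this corollary by specializing Theorem \ref{th:mainth} to the torus case (where $\bar\rho=0$, $\Wscr$ is trivial, and the $V(\chi)$ are characters), with the footnoted reduction to the generic case left aside just as you leave it. Your check that $\Lscr_\delta\neq\emptyset$ is fine; note that quasi-symmetry even forces $\sum_i\beta_i=0$, so $0$ is the center of the full-dimensional zonotope $\bar\Sigma$ and the non-emptiness is immediate for $\delta$ near $0$ in the (dense, since the arrangement is never degenerate for a faithful torus action) complement of $\Hscr$.
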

\begin{remark}
For reference we note that there is  extension of Theorem \ref{th:mainth} that may allow one to construct twisted NCCRs \cite[Theorem 1.6.4]{SVdB}.
\end{remark}
We now state some consequences of these results for determinantal varieties.
\begin{theorem}[\protect{\cite[Theorem 1.4.1]{SVdB}}]  \label{th:det}
For $l< n$ let $X_{l,n}$ be the variety
of $n\times n$-matrices of rank $\le l$.
Then $k[X_{l,n}]$ has an NCCR.
\end{theorem}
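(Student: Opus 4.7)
The plan is to realize $X_{l,n}$ as a GIT-type quotient and then apply Theorem \ref{th:mainth} directly.

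First, I would set up the presentation. Let $G=\GL(l)$, let $V=k^l$ be the standard representation, and put
\[
W=\Hom_k(k^n,V)\oplus \Hom_k(V,k^n)=V^{\oplus n}\oplus (V^\ast)^{\oplus n}.
\]
The multiplication map $(A,B)\mapsto BA$ defines a $G$-invariant morphism $W^\ast\to M_n(k)$ whose image is exactly the matrices of rank $\le l$, and by the first fundamental theorem of invariant theory for $\GL(l)$ we have $\Sym(W)^G=k[X_{l,n}]$. Thus it suffices to produce an NCCR for $\Sym(W)^G$.

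Next I would verify the two hypotheses of Theorem \ref{th:mainth}. With $T\subset G$ the diagonal torus and $\epsilon_1,\ldots,\epsilon_l$ the standard characters, the $T$-weights of $W$ are $\epsilon_i$ (multiplicity $n$) and $-\epsilon_i$ (multiplicity $n$), for $i=1,\ldots,l$. For any line $\ell\subset X(T)_\RR$ the weights $\beta\in\ell$ occur in opposite pairs with equal multiplicity, so $\sum_{\beta\in\ell}\beta=0$, which is quasi-symmetry. For genericity of $(G,W)$ I would exhibit a $G$-orbit in $W^\ast$ which is closed with trivial stabilizer: taking $(A,B)$ with $A\in\Hom(k^n,V)$ surjective and $B\in\Hom(V,k^n)$ injective (possible since $l<n$), one sees that $G$ acts freely on such a pair, the orbit is closed, and its complement (where $A$ fails to be surjective or $B$ fails to be injective) has codimension $\ge 2$, as it is cut out by the vanishing of $l\times l$ minors on each factor.

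With these two properties established, Theorem \ref{th:mainth} produces an NCCR $\Lambda_\delta=\End_{S^G}(M(U_\delta))$ provided one can choose $\delta$ in the complement of the hyperplane arrangement $\Hscr$ in $X(T)^\Wscr_\RR$ for which the strongly-critical-type window $\Lscr_\delta=X(T)^+\cap(-\bar\rho+\delta+(1/2)\bar\Sigma)$ is non-empty. Here $X(T)^\Wscr_\RR$ is one-dimensional, spanned by $\det=\epsilon_1+\cdots+\epsilon_l$, so $\Hscr$ consists of isolated points and the complement is a union of open intervals. Since $(1/2)\bar\Sigma$ is a full-dimensional, origin-symmetric polytope in $X(T)_\RR$ (thanks to quasi-symmetry), translating it by any sufficiently small generic $\delta$ keeps it of full dimension, and one can shift until it meets $X(T)^+$; concretely, taking $\delta$ a small generic multiple of $\det$ suffices.

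The genuinely non-routine part is the second step — confirming genericity and, more importantly, that Theorem \ref{th:mainth} applies with a non-empty window $\Lscr_\delta$ giving an honest (untwisted) NCCR. The quasi-symmetry calculation and the existence of a free closed orbit are straightforward, but one must know the hyperplane arrangement $\Hscr$ well enough to choose $\delta$ so that $\Lscr_\delta\neq\emptyset$; this is where the quasi-symmetric hypothesis pays off, because centrality of $(1/2)\bar\Sigma$ guarantees that some translate does intersect the dominant cone away from the walls.
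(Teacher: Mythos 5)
Your proposal takes the same route as the paper: realize $k[X_{l,n}]$ as $\Sym(W)^{G}$ for $G=\GL(l)$ and $W=V^n\oplus(V^\ast)^n$ via the first fundamental theorem, and apply Theorem \ref{th:mainth}; the paper simply delegates the verification of the hypotheses to \cite{SVdB}. Your checks of quasi-symmetry, of genericity (free closed orbits for $A$ surjective, $B$ injective, with complement of codimension $n-l+1\ge 2$), and of $\Lscr_\delta\neq\emptyset$ (e.g.\ the trivial weight lies in the window since $\bar\Sigma=[-n,n]^l$ and $l<n$) are correct.
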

The variety $X_{l,n}$ was already discussed in Example \ref{ex:textbook} and the NCCR obtained in loc.\ cit.\ is the same as the one we obtain in Theorem
\ref{th:det}. To prove Theorem \ref{th:det} we use the classical description of $k[X_{l,n}]$ as an invariant ring \cite{Weyl}. Put
$G=\GL(l)$ and let $V$ be the standard representation of $G$. Put $W=V^n\oplus (V^\ast)^n$. Then $k[X_{l,n}]=\Sym(W)^G$ and we show in
\cite{SVdB} that Theorem \ref{th:det} follows from Theorem \ref{th:mainth}. For the benefit of the reader we describe the actual module of covariants
that gives the NCCR.
Let $B_{l,n-l}$ be the set of partitions that fit in a rectangle of size $l\times (n-l)$.
 In loc.\ cit.\  it is shown that the following module of covariants defines an NCCR for $R$:
 \begin{equation} \label{eq:det}
   M=\bigoplus_{\lambda\in B_{l,n-l}} M(S^\lambda V),
 \end{equation}
  where $S^\lambda V$ denotes the Schur functor indexed by $\lambda$ applied to $V$.
\begin{theorem}
\label{ref-1.4.3-14} 
For $2l<n$ let $X_{2l,n}^-$ be  the variety of skew-symmetric $n\times n$ matrices of rank $\le 2l$.
If $n$ is odd then $k[X_{2l,n}^-]$ has an NCCR.
\end{theorem}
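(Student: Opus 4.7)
The plan is to realize $k[X_{2l,n}^-]$ as a ring of invariants and then apply Theorem~\ref{th:mainth}, in close analogy with the proof of Theorem~\ref{th:det}. By the First Fundamental Theorem of classical invariant theory for the symplectic group \cite{Weyl}, taking $G=\Sp(2l)$ with standard representation $V$ and setting $W=V^n$, the pullback along the Gram-matrix map $(v_1,\ldots,v_n)\mapsto(\omega(v_i,v_j))_{i,j}$ induces an isomorphism $k[X_{2l,n}^-]\cong\Sym(W)^G$. It therefore suffices to produce an NCCR of $\Sym(W)^G$.

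The hypotheses of Theorem~\ref{th:mainth} can then be checked one by one. Quasi-symmetry is immediate: since $V$ is self-dual, its $T$-weights are $\pm\epsilon_1,\ldots,\pm\epsilon_l$, so summing the weights of $W$ along any line through the origin in $X(T)_\RR$ yields zero. Genericity of $(G,W)$ holds because the stabilizer in $\Sp(2l)$ of any $n$-tuple spanning $V$ is trivial, while the non-spanning locus in $V^n$, being covered by subsets of the form $U^n$ with $U\subsetneq V$ a hyperplane, has codimension $n-2l+1$; the standing assumption $n>2l$ combined with $n$ odd gives $n\ge 2l+1$, so this codimension is at least $2$.

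The crux, and the step where the parity hypothesis is essential, is to exhibit $\delta$ in the complement of $\Hscr$ with $\Lscr_\delta\neq\emptyset$. Since $\Sp(2l)$ is semisimple, $X(T)^\Wscr_\RR=0$, so the only candidate is $\delta=0$. A direct computation gives $\bar{\Sigma}=[-n,n]^l\subset\RR^l$, hence the bounding hyperplanes of the box $-\bar{\rho}+(1/2)\bar{\Sigma}$ have the form $\{x_i=(i-l-1)\pm n/2\}$; the translate $-H+X(T)$ then meets the origin precisely when $(i-l-1)\pm n/2\in\ZZ$, that is, exactly when $n$ is even. For odd $n$ the arrangement therefore avoids the origin, so $\delta=0$ is admissible. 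Moreover $0\in\Lscr_0$, since $|i-l-1|\le l<n/2$ for every $i$ whenever $n>2l$. Theorem~\ref{th:mainth} then produces the desired NCCR $\End_{S^G}(M(U_0))$ of $k[X_{2l,n}^-]$. The parity picture here is the direct higher-rank generalization of the $G=\Sl(2)$, $W=V^n$ example discussed earlier in the paper, which is exactly where the main subtlety lies.
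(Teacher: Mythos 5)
Your proposal is correct and takes exactly the paper's route: realize $k[X_{2l,n}^-]$ as $\Sym(V^n)^{\Sp(2l)}$ via the first fundamental theorem for the symplectic group and apply Theorem~\ref{th:mainth}, with the parity of $n$ entering precisely through whether the facet hyperplanes of $-\bar{\rho}+(1/2)\bar{\Sigma}$ (whose defining constants involve $n/2$) meet the lattice $X(T)=\ZZ^l$. The only detail you gloss over is the closed-orbit half of the genericity condition (orbits of spanning tuples are closed, e.g.\ by Witt's theorem the fibre of the quotient map over a rank-$2l$ Gram matrix is a single orbit), which is standard.
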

This time we put $G=\Sp(2l)$ and $W=V^n$ where $V$ is the standard representation of~$G$.
\begin{theorem}[\protect{\cite[Theorem 1.4.1]{SVdB}}]
\label{ref-1.4.5-15} For $l<n$ let $X^+_{l,n}$ be the variety of symmetric $n\times n$ matrices of rank $\le l$.
If $l$ and $n$ have opposite parity then $k[X^+_{l,n}]$ has an NCCR.
If $l$ and $n$ have the same parity then
$k[X_{l,n}^+]$ has a twisted NCCR. 
\end{theorem}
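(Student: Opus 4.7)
The plan is to mimic the proofs of the preceding determinantal results (Theorems \ref{th:det} and \ref{ref-1.4.3-14}) by presenting $k[X^+_{l,n}]$ as a quasi-symmetric invariant ring and invoking Theorem \ref{th:mainth}, or the twisted extension indicated in \cite[Theorem 1.6.4]{SVdB}, according to the parity of $l+n$.

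First, I would set up the invariant-theoretic description: take $G = O(l)$ with its standard representation $V$, put $W = V^n$ and identify $M_{l,n}$ with $V^n$. The orbit map $A \mapsto A^T A$ presents $k[X^+_{l,n}]$ as $\Sym(W)^G$, and for $l<n$ a generic $n$-tuple in $V$ spans $V$ and has trivial $G$-stabiliser and closed orbit, so $(G,W)$ is generic. Next, I would verify that $W$ is quasi-symmetric: the $T$-weights of $V$ are $\{\pm e_1,\ldots,\pm e_{\lfloor l/2\rfloor}\}$ (plus a zero weight when $l$ is odd), each appearing with multiplicity $n$ in $W$, so on any line through the origin in $X(T)_\RR$ the non-zero weights of $W$ cancel in opposite pairs. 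By Knop's theorem this also ensures $\Sym(W)^G$ is Gorenstein.

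The candidate (possibly twisted) NCCR is then $\Lambda_\delta = \End_{S^G}(M(U_\delta))$, where $\delta$ is chosen in a chamber of the hyperplane arrangement $\Hscr$ attached to $(G,W)$ and
\[
\Lscr_\delta = X(T)^+ \cap (-\bar\rho + \delta + (1/2)\bar\Sigma), \qquad U_\delta = \bigoplus_{\chi \in \Lscr_\delta} V(\chi).
\]

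The main obstacle and source of the parity dichotomy is a careful analysis of when the weights $\chi \in \Lscr_\delta$ descend from $SO(l)$-dominant weights to the character group of the disconnected group $G = O(l)$. Tracking the half-integer contributions from $\bar\rho$ (of type $B_m$ when $l$ is odd, type $D_m$ when $l$ is even) together with the integrality enforced by the $n$ copies of $V$ in $W$ produces a residue class that vanishes precisely when $l+n$ is odd. In that case $U_\delta$ is a genuine $G$-representation and Theorem \ref{th:mainth} delivers an honest NCCR. When $l+n$ is even the relevant weights only assemble into a projective $G$-representation; the corresponding sheaf of algebras on $\Spec S^G$ is a non-trivial reflexive Azumaya algebra of finite global dimension, and \cite[Theorem 1.6.4]{SVdB} yields a twisted NCCR. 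In either case the Cohen-Macaulay property of $M(U_\delta)$ follows from Theorem \ref{th:strongly} applied to each summand $V(\chi)^\ast$, exactly as in the proofs of Theorems \ref{th:det} and \ref{ref-1.4.3-14}.
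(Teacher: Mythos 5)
Your proposal follows the same route as the paper: present $k[X^+_{l,n}]$ as $\Sym(V^n)^{O(l)}$, check genericity and quasi-symmetry, and reduce to Theorem \ref{th:mainth} and its twisted extension \cite[Theorem 1.6.4]{SVdB}, with the parity dichotomy traced to the half-integral (spin) weights forced by $\bar\rho$ together with the disconnectedness of $O(l)$ --- precisely the ``refined analysis'' of \cite[\S 6]{SVdB} that the paper itself defers to. One small slip: crepancy requires Cohen--Macaulayness of $\Lambda_\delta=M(\End(U_\delta))$, so Theorem \ref{th:strongly} must be applied to the irreducible summands of $\End(U_\delta)$ (whose highest weights lie near $-2\bar\rho+\Sigma$), not merely to the summands $V(\chi)^\ast$ of $U_\delta^\ast$.
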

Here we put $G=\operatorname{O}(l)$ and again $W=V^n$ where $V$ is the standard representation of $G$. A complication arises since $\operatorname{O}(l)$ is not connected, so we
cannot directly apply Theorem \ref{th:mainth}. So we have to perform a more refined analysis which is carried out in \cite[\S6]{SVdB}.
Twisted NCCRs appear because $\operatorname{SO}(l)$, the connected component of $\operatorname{O}(l)$, is not simply connected.

The NCCRs given in Theorems \ref{ref-1.4.3-14}, \ref{ref-1.4.5-15} have been crucial for establishing homological projective duality \cite{KuzHPD} for
determinantal varieties of skew-symmetric matrices by Rennemo and Segal \cite{RenSeg}. The corresponding results for symmetric matrices are work in progress
by the same authors \cite{rennemo}.

\medskip

Even if $W$ is not quasi-symmetric then it is still possible that $\Sym(W)^G$ has an NCCR given by a module of covariants but we are
unaware of a general rule like Theorem~\ref{th:mainth} for constructing them. Three-dimensional
affine toric varieties (see \S\ref{sec:toric}) are an example of this, since they can be written as $\Sym(W)^G$ where $W$ is generally not
quasi-symmetric. Another example is given by 
the recent work of Doyle:
\begin{example}[\protect{\cite[Theorem 3.11]{Doyle}}]\label{ex:doyle} Let $0<l<n$ be integers such that $\ggd(l,n)=1$.
  Let $V$ be the standard representation of $G=\Sl(l)$ and put $W=V^n$.

  Then $R:=\Sym(W)^G$ is the homogeneous coordinate ring of the Grassmannian $\Gr(l,n)$
  for the Pl\"ucker embedding.
 Let $P_{l,n-l}$ be the set of partitions whose young tableaux are  above the diagonal in a rectangle of size $l\times (n-l)$.
 In loc.\ cit.\  it is shown that the following module of covariants defines an NCCR for $R$:
 \[
   M=\bigoplus_{\lambda\in P_{l,n-l}} M(S^\lambda V)
 \]
(compare with \eqref{eq:det}).
\end{example}
We reiterate that even if an NCCR exists, there doesn't have to be one given by a module of covariants. See Example \ref{ex:toric}.
\subsection{NCCRs via crepant resolutions obtained by GIT}
\label{ssec:gitcrepant}
Here we discuss some results from \cite{HLSam} that shows that in certain cases the  NCCRs for the categorical quotients $X\quot G$ that we constructed
in \S\ref{sec:quotient} can be obtained as the endomorphisms of a tilting bundle on a crepant resolution,  i.e. the method of \S\ref{ssec:tilting}.
This crepant resolution is constructed using a geometric invariant theory. It turns out that we have to allow crepant resolutions
by Deligne-Mumford stacks. This occurred already before in Lemma \ref{prop:dm} and Example \ref{ex:toric}.
\begin{remark}
To construct a resolution of $X\quot G$ using geometric invariant theory, one needs a linearized line bundle on $X$. Since here $X$ is a representation,
the only $G$-equivariant line bundles on $X$ are those obtained from characters of $G$. If $G$ is semi-simple
then there are no (non-trivial) characters so we cannot proceed. Thus we can for example 
not deal with determinantal varieties of symmetric and skew-symmetric matrices (see Theorems \ref{ref-1.4.3-14}  and \ref{ref-1.4.5-15}).
In those cases the relevant groups were respectively $\Sp(2l)$ and the connected component $\operatorname{SO}(l)$ of $\operatorname{O}(l)$, both of which are semi-simple. On the other hand, ordinary determinantal
varieties are fine since in that case $G=\GL(l)$ which has a non-trivial character given by the determinant, which may be used to construct a crepant resolution.
\end{remark}
\begin{remark}
Geometric invariant theory is still helpful for constructing a resolution of $X\quot G$ via a procedure invented by Kirwan \cite{Kirwan1,Reichstein,EdidinRydh}.
However these resolutions 
are usually not crepant. Consistent with expected minimality of NCCRs (see \S\ref{ssec:partial}) we are able to show that some NCCRs embed inside them \cite{SVdBKirwan}.
\end{remark}
We retain the notations and assumptions of the previous section. We assume that $W$ is a quasi-symmetric representation
of $G$ and $X=\Spec \Sym(W)=W^\ast$.
Recall that for a character $\mu\in X(G)=X(T)^\Wscr$ we may define a $G$-invariant open subset of $X$ as
\[
  X^{ss,\mu}=\{x\in X\mid \exists k>0\text{ and }s\in \Gamma(\Oscr_X\otimes \mu^{k})^G\text{ such that }s(x)\neq 0\}.
  \]
The variety  $X^{ss,\mu}$ admits a good quotient\footnote{A $G$-equivariant map $Z\rightarrow Y$ is a good quotient if locally on $Y$ it is of the form $U\rightarrow U\quot G$ for
  $U$ affine.} $X^{ss,\mu}\quot G$ which is proper over $X\quot G$. For $U$ a representation of $G$ we write $\Mscr(U)$ for the
vector bundle on $X^{ss,\mu}/G$ given by $U\otimes \Oscr_{X^{ss,\mu}/G}$.
The global sections of $\Mscr(U)$ are equal to $M(U)$.

  Below we let $\Hscr_0$ be the \emph{central} hyperplane arrangement on $X(G)_\RR=X(T)^\Wscr_\RR$ corresponding to the affine hyperplane arrangement $\Hscr$
  introduced in \eqref{eq:hyper}. Thus the hyperplanes in $\Hscr_0$ are the hyperplanes which are induced from central hyperplanes in $X(T)_\RR$ which are parallel to the facets
  of $\bar{\Sigma}$. 

  \begin{proposition}[\protect{\cite[Proposition 2.1]{HLSam}}]
    Assume that the action of $T$ on $X$ has generically finite stabilizers and let $\mu\in X(G)$ be in the complement of $\Hscr_0$. Then
    $X^{ss,\mu}/G$ is a Deligne-Mumford stack.
  \end{proposition}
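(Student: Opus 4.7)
The goal is to show every $x\in X^{ss,\mu}$ has finite $G$-stabilizer, which in characteristic zero is equivalent to the quotient stack $X^{ss,\mu}/G$ being Deligne--Mumford. Suppose for contradiction that some $x\in X^{ss,\mu}$ has $\dim \mathrm{Stab}_G(x)>0$. Picking a nontrivial one-parameter subgroup of a maximal torus of $\mathrm{Stab}_G(x)^0$ and conjugating by some $g\in G$ (which preserves $X^{ss,\mu}$), I may assume $x\in X^{ss,\mu}$ has a nontrivial one-parameter subgroup $\lambda_0\in X_\ast(T)$ in its stabilizer. Writing $x=\sum_i x_i e_i$ in a weight basis $(e_i)$ of $W^\ast$ with $T$-weights $(-\beta_i)$, and setting $I=\{i:x_i\neq 0\}$, a one-parameter subgroup $\nu\in X_\ast(T)_\RR$ fixes $x$ iff $\langle\beta_i,\nu\rangle=0$ for all $i\in I$, so $\lambda_0\in V:=\{\beta_i:i\in I\}^\perp$ and $V\neq 0$. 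For any $\nu\in V$ the curve $\nu(t)\cdot x$ is constant, so the limits $\lim_{t\to 0}(\pm\nu)(t)\cdot x=x$ exist in $X$; the Hilbert--Mumford criterion applied to $\nu$ and $-\nu$ forces $\langle\mu,\nu\rangle=0$, hence $\mu\in V^\perp=\mathrm{span}_\RR\{\beta_i:i\in I\}$, a subspace of codimension $\dim V\geq 1$.

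The core step is to produce a nonzero $\lambda^\ast\in V$ with $(\lambda^\ast)^\perp$ a central hyperplane in $X(T)_\RR$ parallel to a facet of $\bar\Sigma$, i.e.\ equal to the $\RR$-span of $\{\beta_j:\langle\beta_j,\lambda^\ast\rangle=0\}$. I build $\lambda^\ast$ by iterative specialization: set $V_0:=V$ and, as long as $\dim V_k>1$, choose an index $j_{k+1}$ with $\beta_{j_{k+1}}\notin V_k^\perp$ and put $V_{k+1}:=V_k\cap\ker\langle\beta_{j_{k+1}},\cdot\rangle$. Such a $j_{k+1}$ exists: the assumption that $T$ acts on $X$ with generically finite stabilizers is equivalent to $\{\beta_j\}_j$ spanning $X(T)_\RR$, so if every $\beta_j$ lay in $V_k^\perp$ then $V_k\subseteq\{\beta_j\}_j^\perp=0$, contradicting $\dim V_k>1$. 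At each step $\dim V_k$ drops by one while $V_k^\perp=V_{k-1}^\perp+\RR\beta_{j_k}$ grows by one. After $\dim V-1$ steps I reach a line $V_{\dim V-1}=\RR\lambda^\ast$, and then $(\lambda^\ast)^\perp=V_{\dim V-1}^\perp$ is of codimension one and spanned by $\{\beta_i\}_{i\in I}\cup\{\beta_{j_1},\dots,\beta_{j_{\dim V-1}}\}\subseteq\{\beta_j:\langle\beta_j,\lambda^\ast\rangle=0\}$, so $(\lambda^\ast)^\perp$ is parallel to a facet of $\bar\Sigma$ as required.

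Since $\lambda^\ast\in V$, the first paragraph gives $\langle\mu,\lambda^\ast\rangle=0$ and therefore $\mu\in(\lambda^\ast)^\perp\cap X(T)^\Wscr_\RR$. Either this intersection is a proper hyperplane of $X(T)^\Wscr_\RR$ and hence an element of $\Hscr_0$, or it equals all of $X(T)^\Wscr_\RR$ (the degenerate case mentioned after \eqref{eq:hyper}), in which case the complement of $\Hscr_0$ is already empty. Either way the hypothesis that $\mu$ lies in the complement of $\Hscr_0$ is contradicted, so every $x\in X^{ss,\mu}$ has finite stabilizer and $X^{ss,\mu}/G$ is Deligne--Mumford. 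The delicate part of the argument is the specialization step in the second paragraph, whose success rests precisely on the generic-finiteness hypothesis for the $T$-action; everything else is bookkeeping with Hilbert--Mumford and the zonotopal geometry of $\bar\Sigma$.
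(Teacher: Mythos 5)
The paper gives no proof of this proposition (it is quoted directly from Halpern-Leistner--Sam), so I can only judge your argument on its own terms. The combinatorial core --- Hilbert--Mumford applied to $\pm\nu$ for $\nu$ in the connected stabilizer, followed by the iterative specialization of $V$ down to a line $\RR\lambda^\ast$ whose orthogonal complement is a central hyperplane spanned by the weights it contains, hence parallel to a facet of $\bar\Sigma$ --- is correct and is essentially the standard way this kind of statement is proved. The dimension counts in the specialization step check out ($V_{k+1}^\perp=V_k^\perp+\RR\beta_{j_{k+1}}$, and genericity of the $T$-action, i.e.\ the $\beta_j$ spanning $X(T)_\RR$, guarantees the loop can always continue), and your handling of the degenerate case at the end is fine. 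A cosmetic point: Hilbert--Mumford applies to integral one-parameter subgroups, so you should either note that $V$ is a rational subspace (so its integral points span it) or simply choose $\lambda^\ast$ to be a primitive integral generator of the rational line $V_{\dim V-1}$ and apply the criterion to $\pm\lambda^\ast$ only, which is all you use.

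There is, however, a genuine gap at the very first step. You assume that a point $x\in X^{ss,\mu}$ with $\dim\operatorname{Stab}_G(x)>0$ admits a nontrivial one-parameter subgroup in its stabilizer, by ``picking a nontrivial one-parameter subgroup of a maximal torus of $\operatorname{Stab}_G(x)^0$''. But $\operatorname{Stab}_G(x)^0$ may be a positive-dimensional \emph{unipotent} group (already for $\Sl(2)$ acting on its standard representation the stabilizer of a nonzero vector is $\GG_a$), in which case its maximal torus is trivial and your argument never gets started; yet such a point would still obstruct the Deligne--Mumford property, since that property requires \emph{all} stabilizers to be finite. The standard repair: since $x$ is semistable it lies in a $G$-invariant affine open $X_f$ for some semi-invariant $f$; the closure of $G\cdot x$ in $X_f$ contains a unique closed orbit $G\cdot y$, with $y$ again semistable, $\dim\operatorname{Stab}_G(y)\ge\dim\operatorname{Stab}_G(x)>0$, and $\operatorname{Stab}_G(y)$ reductive by Matsushima's criterion; a positive-dimensional connected reductive group does contain a nontrivial torus, and your argument then applies to $y$. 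With that reduction inserted, the proof is complete.
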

  \begin{lemma} Assume that $(G,W)$ is generic. Then the canonical map $X^{ss,\mu}/G\rightarrow X\quot G$ is crepant.
  \end{lemma}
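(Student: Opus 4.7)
The plan is to compute both $\omega_{\mathcal{Y}}$, where $\mathcal{Y}:=X^{ss,\mu}/G$, and $\pi^{\ast}\omega_Y$, where $Y:=X\quot G$, and to show they coincide as line bundles on the smooth DM stack $\mathcal{Y}$. I expect both to be trivial, so that crepancy reduces to a codimension-two comparison. The key inputs will be the quasi-symmetry of $W$, the reductivity of $G$, and the genericity of $(G,W)$.

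I would first compute $\omega_{\mathcal{Y}}$. Since $X=W^\ast$, the $G$-equivariant canonical bundle is $\omega_X\cong \det(W)\otimes\Oscr_X$. Quasi-symmetry forces the weights of $W$ to cancel in pairs along every line through the origin of $X(T)_\RR$, so $\det W$ is trivial as a $G$-character, i.e. $W$ is unimodular. Hence $\omega_X$ is $G$-equivariantly isomorphic to $\Oscr_X$, and similarly on $X^{ss,\mu}$. The quotient map $q\colon X^{ss,\mu}\rightarrow \mathcal{Y}$ is smooth of relative dimension $\dim G$ with relative cotangent bundle $\mathfrak{g}^\ast\otimes \Oscr_{X^{ss,\mu}}$, giving
\[
q^\ast\omega_{\mathcal{Y}}\cong \omega_{X^{ss,\mu}}\otimes \det(\mathfrak{g})
\]
as $G$-equivariant line bundles. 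For reductive $G$, the weights of $\mathfrak{g}$ are the roots $\Phi$ (which come in $\pm$ pairs) together with $\dim T$ zero weights, so $\det\mathfrak{g}$ is trivial as a $G$-character. Combining with the previous step, $q^\ast\omega_{\mathcal{Y}}$ is $G$-equivariantly trivial and descends to $\omega_{\mathcal{Y}}\cong\Oscr_{\mathcal{Y}}$.

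For the comparison with $\pi^\ast\omega_Y$, Knop's theorem (cited in the excerpt) makes $Y$ Gorenstein, so $\omega_Y$ is invertible and $\pi^\ast\omega_Y$ is a genuine line bundle on $\mathcal{Y}$. The genericity hypothesis supplies a $G$-stable open $X^{\circ}\subset X$ of points with closed orbit and trivial stabilizer whose complement has codimension $\geq 2$. Standard GIT shows that $X^{\circ}\subset X^{ss,\mu}$ for $\mu$ off the central walls $\Hscr_0$ and that $\pi$ restricts to an isomorphism between $X^{\circ}/G\subset\mathcal{Y}$ and its image $U\subset Y$; both complements still have codimension $\geq 2$. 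On $\pi^{-1}(U)$ we trivially have $\pi^\ast\omega_Y\cong\omega_{\mathcal{Y}}$, and since $\mathcal{Y}$ is a smooth Noetherian DM stack the restriction map $\Pic(\mathcal{Y})\to\Pic(\pi^{-1}(U))$ is injective, so the isomorphism extends globally.

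The main obstacle, in my view, is the GIT input in the last paragraph: verifying that the free locus $X^{\circ}$ lies inside $X^{ss,\mu}$ whenever $\mu$ avoids $\Hscr_0$, and that the good quotient morphism truly restricts to an isomorphism over $X^{\circ}/G$. This should follow from Hilbert--Mumford together with a Luna-slice argument (free orbits are stable for every generic polarization), but it is the only non-formal ingredient; the computation of $\omega_{\mathcal{Y}}$ via unimodularity plus triviality of $\det\mathfrak{g}$, and the codimension-two principle for line bundles on smooth DM stacks, are both standard.
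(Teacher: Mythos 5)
Your argument is correct and is essentially the proof the paper has in mind: the text simply defers to \cite[Lemma 4.5]{SVdB4}, whose argument is exactly yours (both canonical bundles are trivial, via unimodularity of the quasi-symmetric $W$ and the triviality of $\det\mathfrak{g}$ on the stack side and Knop's theorem on the quotient side, and $\pi$ is an isomorphism over the free closed-orbit locus, whose complement has codimension $\ge 2$ by genericity). The one step you flag as non-formal --- that a point with closed orbit and trivial stabilizer is $\mu$-semistable for every $\mu$ --- does hold by Hilbert--Mumford, since for such a point no nontrivial one-parameter subgroup limit exists.
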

  \begin{proof} This is proved in \cite[Lemma 4.5]{SVdB4} in the case that $G$ is a torus, but this assumption is not relevant for the proof. 
  \end{proof}
The following is one of the main results of \cite{HLSam}. It is proved using similar combinatorics as in \cite{SVdB}.
  \begin{theorem}[\protect{\cite{HLSam}}]
    \label{th:HLS1}
    Assume that the action of $T$ on $X$ has generically finite stabilizers and let $\mu\in X(G)$ be in the complement of $\Hscr_0$. Let
    $U_\delta$ be as in the statement of Theorem \ref{th:mainth}. Then $\Mscr(U_\delta)$ is a tilting bundle on $X^{ss,\mu}/G$ such that
    $\End_{X^{ss,\mu}/G}(\Mscr(U_\delta))=M(\End(U_\delta))$.
  \end{theorem}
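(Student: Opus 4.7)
The plan is to establish the two distinct assertions separately: (i) the identification of the endomorphism algebra with the module of covariants $M(\End(U_\delta))$, and (ii) the tilting properties of $\Mscr(U_\delta)$. Both rest on two facts that are already available: Theorem \ref{th:mainth}, which tells us that $\Lambda_\delta = \End_{S^G}(M(U_\delta)) = M(\End(U_\delta))$ is an NCCR of $\Sym(W)^G$, and the Kempf--Ness / Hesselink stratification of the unstable locus $X \setminus X^{ss,\mu}$, which in the quasi-symmetric setting gives a clean way to control cohomology weight-by-weight.

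First I would identify the endomorphism ring. Since $\Mscr(U_\delta) = U_\delta \otimes \Oscr_{X^{ss,\mu}/G}$ is by construction a vector bundle, one has $\uEnd(\Mscr(U_\delta)) = \Mscr(\End(U_\delta))$, and the restriction to the open substack yields a canonical map
\[
  M(\End(U_\delta)) \;=\; \Gamma(X/G, \Mscr(\End(U_\delta))) \;\longrightarrow\; \Gamma(X^{ss,\mu}/G, \Mscr(\End(U_\delta))) \;=\; \End_{X^{ss,\mu}/G}(\Mscr(U_\delta)).
\]
The left-hand side is the NCCR $\Lambda_\delta$, hence maximal Cohen--Macaulay, hence reflexive over $S^G$. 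Under the hypothesis that $T$ has generically finite stabilizers, combined with quasi-symmetry, the unstable locus $X \setminus X^{ss,\mu}$ has codimension $\geq 2$ in $X$ (and its image in $X/\!\!/G$ has codimension $\geq 2$ in $X/\!\!/G$). The reflexivity of the source then forces the restriction map to be an isomorphism.

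Second I would establish the tilting property. For partial tilting I must show
\[
  \Ext^i_{X^{ss,\mu}/G}(\Mscr(U_\delta), \Mscr(U_\delta)) \;=\; H^i(X^{ss,\mu}/G, \Mscr(\End(U_\delta))) \;=\; 0 \quad \text{for } i > 0.
\]
Decomposing $\End(U_\delta) = U_\delta^\vee \otimes U_\delta$ into isotypic pieces, every irreducible summand $V(\nu)$ that appears has highest weight $\nu$ that is a difference of two elements of $\Lscr_\delta$, and hence lies in $\bar\Sigma$. The vanishing of higher cohomology for $\Mscr(V(\nu))$ with such $\nu$ on $X^{ss,\mu}/G$ is exactly the content of the cohomology computation that one obtains by filtering $D_{\Qch}^G(X)$ using the Kempf--Ness stratification: on each stratum the local cohomology is expressed by a Koszul-type complex in the normal weights, and the condition that $\delta$ avoids $\Hscr_0$ guarantees that no weight of $\End(U_\delta)$ hits the critical boundary, so all contributions to $H^{>0}$ cancel. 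For the generation property one proves that the full subcategory of $D^G_{\Qch}(X)$ spanned by $\{\Oscr_X \otimes V(\chi) \mid \chi \in \Lscr_\delta\}$ is a \emph{magic window}: restriction along $X^{ss,\mu} \hookrightarrow X$ is an equivalence of this window with $D_{\Qch}(X^{ss,\mu}/G)$, and any object annihilated by $\RHom$ with $\Mscr(U_\delta)$ corresponds to an object of the window annihilated by all $\Mscr(V(\chi))$ with $\chi \in \Lscr_\delta$, which must then vanish.

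The main obstacle is the combinatorial/representation-theoretic heart of the magic-window statement: proving that the set $\Lscr_\delta$ is chosen just right, so that the Kempf--Ness strata of $X \setminus X^{ss,\mu}$ contribute neither to higher cohomology (giving partial tilting) nor to the kernel of restriction (giving generation). Here one uses, on each stratum associated to a one-parameter subgroup $\lambda$, the quasi-symmetry of $W$ to match positive and negative $\lambda$-weights, together with the hypothesis that $\delta$ lies in the complement of $\Hscr_0$ so that the translated window $-\bar\rho + \delta + (1/2)\bar\Sigma$ does not touch the relevant shifted weight lattices. Once this stratum-by-stratum balancing is carried out, the $\Ext$-vanishing and the generation both fall out, and the endomorphism calculation of the first paragraph completes the proof.
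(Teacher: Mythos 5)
The paper does not actually prove this theorem---its ``proof'' is the two-line citation ``combine \cite[Theorem 1.2]{HLSam} with \cite[Lemma 2.9]{HLSam}''---so the relevant comparison is with the strategy of that reference, and your outline does reconstruct it faithfully: the endomorphism identification via extending sections over the unstable locus (the role of \cite[Lemma 2.9]{HLSam}), and the $\Ext$-vanishing plus generation via the magic-window equivalence and the Kempf--Ness stratification (the role of \cite[Theorem 1.2]{HLSam}). Two remarks. First, your codimension argument for $\End_{X^{ss,\mu}/G}(\Mscr(U_\delta))=M(\End(U_\delta))$ needs the standing genericity assumption on $(G,W)$ (which does guarantee that the unstable locus has codimension $\ge 2$ in $X$, since points with closed orbit and trivial stabilizer are $\mu$-semistable for every $\mu$); the cleaner route is to extend sections of the vector bundle $\End(U_\delta)\otimes\Oscr_X$ over the smooth $X$ and then take $G$-invariants, rather than to invoke reflexivity of $\Lambda_\delta$ over $S^G$ and codimension in $X\quot G$, since $X^{ss,\mu}/G\to X\quot G$ is not an open immersion. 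Second, your intermediate claim that every irreducible constituent of $U_\delta^\vee\otimes U_\delta$ has highest weight in $\bar\Sigma$ is not automatic from ``differences of elements of $\Lscr_\delta$'': one needs the $\Wscr$-invariance and symmetry of $\bar\Sigma$ (the latter from quasi-symmetry) together with a convexity argument on Weyl orbits, and even then the vanishing criterion on each Kempf--Ness stratum is phrased in terms of $\bar\rho$-shifted $\lambda$-weights, not membership in $\bar\Sigma$ alone.

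The genuine caveat is that everything you label as ``the combinatorial/representation-theoretic heart''---the stratum-by-stratum cancellation giving $H^{>0}(X^{ss,\mu}/G,\Mscr(V(\nu)))=0$, and the statement that restriction from the window $\Lscr_\delta$ is an equivalence onto $\Dscr(X^{ss,\mu}/G)$---is precisely the main theorem of \cite{HLSam}. As written, your text describes where that work happens but does not carry it out, so it should be read as a correct road map to the cited proof rather than as a self-contained argument.
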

  \begin{proof} This follows from combining \cite[Theorem 1.2]{HLSam} with \cite[Lemma 2.9]{HLSam}.
  \end{proof}
  In this way we obtain more evidence for Conjecture \ref{con:BO}.
  \begin{corollary}[\protect{\cite[Corollary 1.3]{HLSam}}] \label{cor:GIT}
    Under the hypotheses of Theorem \ref{th:HLS1} if $\mu,\mu'\in X(G)$ are in the complement of $\Hscr_0$
    and the complement of $\Hscr$ is non-empty (i.e.\ $\Hscr$ is non-degenerate)
    then $\Dscr(X^{ss,\mu}/G)\cong \Dscr(X^{ss,\mu'}/G)$.
  \end{corollary}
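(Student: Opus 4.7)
The plan is to reduce the equivalence to Theorem \ref{th:HLS1} by exhibiting the same NCCR as a common endomorphism ring of a tilting bundle on both sides. Since the hyperplane arrangement $\Hscr$ is non-degenerate, the complement of $\Hscr$ is non-empty, so I can pick some $\delta$ in it. This produces the representation $U_\delta$ of equation \eqref{eq:Udelta} and the NCCR $\Lambda_\delta = \End_{S^G}(M(U_\delta))$ of $\Sym(W)^G$ from Theorem \ref{th:mainth}.

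First I would apply Theorem \ref{th:HLS1} separately to $\mu$ and to $\mu'$. Since both lie in the complement of $\Hscr_0$, the theorem yields that $\Mscr(U_\delta)$ is a tilting bundle on each of $X^{ss,\mu}/G$ and $X^{ss,\mu'}/G$, and in both cases the endomorphism ring is $M(\End(U_\delta))$. By the identity \eqref{eq:modcov} (noting that $(G,W)$ is generic) this common endomorphism ring is exactly $\Lambda_\delta$.

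Next I would invoke Theorem \ref{th:kellerkrause} (in its stacky form) on each GIT quotient. Both $X^{ss,\mu}/G$ and $X^{ss,\mu'}/G$ are smooth Deligne--Mumford stacks (smoothness is inherited from $X = W^\ast$, and the DM property is granted by the proposition preceding Theorem \ref{th:HLS1}), and $\Lambda_\delta$ is noetherian as a finite $S^G$-module. Hence $\RHom(\Mscr(U_\delta),-)$ induces equivalences
\begin{equation*}
\Dscr(X^{ss,\mu}/G) \cong \Dscr(\Lambda_\delta^\circ) \cong \Dscr(X^{ss,\mu'}/G).
\end{equation*}
Composing the two gives the asserted derived equivalence, and it is automatically linear over $X \quot G = \Spec S^G$ because both tilting equivalences are constructed via the pushforward to $\Spec S^G$.

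The only subtlety — hardly an obstacle here since Theorem \ref{th:HLS1} has done the work — is in ensuring the same $U_\delta$ serves both GIT chambers. This is exactly why the non-degeneracy assumption on $\Hscr$ is imposed: it guarantees the existence of at least one admissible $\delta$, producing a single NCCR $\Lambda_\delta$ that mediates the equivalence between the derived categories attached to any two GIT chambers in the complement of $\Hscr_0$.
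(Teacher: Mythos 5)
Your proposal is correct and is exactly the intended derivation: the paper states the corollary without proof immediately after Theorem \ref{th:HLS1}, precisely because both GIT quotients carry the tilting bundle $\Mscr(U_\delta)$ with the \emph{same} endomorphism ring $M(\End(U_\delta))=\Lambda_\delta$, so both are derived equivalent to $\Dscr(\Lambda_\delta^\circ)$ via Theorem \ref{th:kellerkrause} and hence to each other. No gaps.
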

  We also obtained the promised description of NCCRs via resolutions.
  \begin{corollary} Assume that $(G,W)$ is generic and let $\Lambda$ be an NCCR constructed via Theorem \ref{th:mainth}. Let $\mu$ be in the complement of $\Hscr_0$. Then $\Lambda$ is the endomorphism ring
    of a tilting bundle on the DM stack $X^{ss,\mu}/G$.
  \end{corollary}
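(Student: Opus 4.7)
The plan is to combine Theorem \ref{th:HLS1} with the explicit description of $\Lambda$ supplied by Theorem \ref{th:mainth} and the genericity identification \eqref{eq:modcov}. There is very little to do; the work has been front-loaded into the theorems already stated.

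First I would unpack the definition. By Theorem \ref{th:mainth}, $\Lambda$ arises as $\Lambda_\delta = \End_{S^G}(M(U_\delta))$ where $\delta$ lies in the complement of the affine hyperplane arrangement $\Hscr$ and $U_\delta = \bigoplus_{\chi \in \Lscr_\delta} V(\chi)$ as in \eqref{eq:Udelta}. The hypothesis that $(G,W)$ is generic is precisely what makes \eqref{eq:modcov} applicable, so that $\End_{S^G}(M(U_\delta)) = M(\End(U_\delta))$.

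Next I would check the hypotheses of Theorem \ref{th:HLS1}. The required condition that $T$ acts on $X$ with generically finite stabilizers follows from genericity of $(G,W)$: the locus where $G$ acts with trivial stabilizer is non-empty, and its points have trivial, hence finite, $T$-stabilizer. Applying Theorem \ref{th:HLS1} to the given $\mu$ in the complement of $\Hscr_0$ and to the representation $U_\delta$ then yields that $\Mscr(U_\delta)$ is a tilting bundle on the DM stack $X^{ss,\mu}/G$ with
\[
\End_{X^{ss,\mu}/G}(\Mscr(U_\delta)) = M(\End(U_\delta)).
\]

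Concatenating the two identifications gives
\[
\Lambda = \End_{S^G}(M(U_\delta)) = M(\End(U_\delta)) = \End_{X^{ss,\mu}/G}(\Mscr(U_\delta)),
\]
which is exactly the claim. The real content — proving that $\Mscr(U_\delta)$ is tilting, i.e.\ that the higher $\Ext$'s between the summands vanish on the GIT quotient and that these summands generate $D_{\Qch}(X^{ss,\mu}/G)$ — is the combinatorial heart of \cite{HLSam} and is already packaged inside Theorem \ref{th:HLS1}. At the level of this corollary, the only mild subtlety is matching the two chambers: the weight $\delta$ controls which strongly critical weights enter $U_\delta$, while $\mu$ controls the GIT quotient, and Theorem \ref{th:HLS1} applies for \emph{any} $\mu$ in the complement of $\Hscr_0$, so no compatibility need be arranged by hand.
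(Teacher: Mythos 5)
Your proposal is correct and is exactly the intended argument: the paper states this as an immediate corollary of Theorem \ref{th:HLS1} (with no separate proof), obtained by chaining $\Lambda=\End_{S^G}(M(U_\delta))=M(\End(U_\delta))=\End_{X^{ss,\mu}/G}(\Mscr(U_\delta))$ via the genericity identity \eqref{eq:modcov}. Your verification that genericity of $(G,W)$ forces the $T$-action to have generically finite (indeed trivial) stabilizers, so that Theorem \ref{th:HLS1} applies, is the only hypothesis check needed and you handle it correctly.
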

  \begin{remark}
    Hidden behind what is discussed in \S\ref{ssec:covariants},\S\ref{ssec:gitcrepant} is the idea of  \emph{windows}, pioneered in \cite{SegalDonovan}. This is  based on the
    fact that we have a restriction map
    \[
\Res:      \Dscr(X/G)\rightarrow \Dscr(X^{ss,\mu}/G),
    \]
    It is then natural to try to find a full subcategory $\Dscr\subset  \Dscr(X/G)$ such that the restriction of $\Res$ to $\Dscr$ yields an equivalence
    $\Dscr \cong \Dscr(X^{ss,\mu}/G)$. A very general result in this direction is \cite[Theorem 1.1]{HL}, see also \cite{BFK}.

    In concrete cases one may hope to define $\Dscr$ as the full subcategory of $\Dscr(X/G)$ which is split generated by $U\otimes \Oscr_{X/G}$ for a suitable
    $G$-representation $U$ whose
    highest weights are restricted to a certain subset $\Lscr$ of  $X(T)^+$ (a ``window''). This is precisely what happens in Theorem \ref{th:HLS1},
    where we take $\Lscr=\Lscr_\delta$. The resulting category $\Dscr$ is a concrete realization of \cite[Theorem 1.1]{HL}, see \cite[Lemma 3.5]{HLSam}.

    One does not actually need to have non-trivial $X^{ss,\mu}/G$ to apply the window principle. The proof of Theorem \ref{th:mainth},
    is based on the fact that $\mod(\Lambda_\delta)$ embeds in $\coh(X/G)$ as the abelian category with a projective generator $U_\delta\otimes \Oscr_{X/G}$.
\end{remark}
\subsection{Local systems, the SKMS and schobers}
\label{sec:SKMS}
In this slightly informal section we assume that the hypotheses of Theorem \ref{th:HLS1} hold.
While Corollary \ref{cor:GIT} implies that two different $\Dscr(X^{ss,\mu}/G)$, $\Dscr(X^{ss,\mu'}/G)$ are derived equivalent, the actual derived equivalence
depends on the choice of $\delta$ in the complement of~$\Hscr$. Moreover by considering compositions
$\Dscr(X^{ss,\mu}/G)\xrightarrow{\delta} \Dscr(X^{ss,\mu''}/G)\xrightarrow{\delta'} \Dscr(X^{ss,\mu'}/G)$ we may produce more derived equivalences. This is consistent with the assertion in Remark \ref{rem:notexplicit}
that there is no ``god-given'' derived equivalence between different crepant resolutions. A different way of saying this is that a crepant resolution may have
a large group of auto-equivalences.

 If $M$ is a Calabi-Yau variety then homological mirror symmetry predicts the existence of a space $S$ (the "stringy K\"ahler moduli space'' or SKMS) such that $\pi_1(S)$ acts on $\Dscr(M)$. More precisely $S$ is the moduli space of complex structures on  the mirror dual $M^\vee$ of $M$. In many cases there are
 good heuristic descriptions of $M^\vee$ and $S$. 

 Even without access to the full mirror symmetric context, which may be technically challenging or even only heuristic, it turns out to be very illuminating to represent the derived autoequivalences of an algebraic variety (or stack)
 as elements of $\pi_1(S)$ for a suitable topological space $S$. Alternatively we may think of such a representation as a \emph{local system of triangulated
   categories on $S$}.
 Understanding this for $\Dscr(X^{ss,\mu}/G)$ was, according to the authors, one of the main motivations
 for writing \cite{HLSam}. Indeed when $X$ is a quasi-symmetric representation, under hypotheses of Theorem \ref{th:HLS1} one may take
 \[
   S= (X(G)_{\CC}-\Hscr_\CC)/X(G)
 \]
 where $\Hscr_\CC$ denotes the complexification of the real hyperplane arrangement $\Hscr$ \cite[Proposition 6.6]{HLSam}.

 In this case there is a nice way to understand that action of $\pi_1(S)$ on $\Dscr(X^{ss}/G)$ \cite[\S6]{HLSam}\cite{SVdB10}. Using Theorem \ref{th:HLS1} again we may just as
 well describe the action of $\pi_1(S)$ on $\Dscr(\Lambda_\delta)$ for $\delta$ contained in the complement of $\Hscr$ and $\Lambda_\delta=M(\End(U_\delta))$.

 For $\delta\in X(G)_\RR$ define $U_\delta$ as in \eqref{eq:Udelta} and put $\Dscr_\delta=\Dscr(\Lambda_\delta)$. Now $\Hscr$ defines a cell decomposition of $X(G)_\RR$
 and it is easy to see that $U_\delta$ only depends on the cell to which $\delta$ belongs. 
 Hence for a cell $C$ let us write $\Lambda_C:=\Lambda_\delta$, $\Dscr_C:=\Dscr_\delta$ for $\delta\in C$. We will refer to the cells of maximal
 dimension as chambers. These are also the connected components of the complement of $\Hscr$.

 If $C'$ is a face of $C$ then there is an idempotent $e_{C,C'} \in \Lambda_{C'}$ such that $\Lambda_{C}=e_{C,C'}\Lambda_{C'} e_{C,C'}$.
 If $C\neq C''$ are distinct adjacent chambers, sharing
 a codimension one face $C'$ then the functor $e_{C'',C'}\Lambda_{C'}  e_{C,C'} \Lotimes_{\Lambda_C} -$ defines an equivalence of categories $\phi_{C,C''}:\Dscr_C\rightarrow \Dscr_{C''}$.

 Put $\tilde{S}=X(G)_\CC-\Hscr_\CC$ and let $\Pi_1(\tilde{S})$ be the groupoid whose objects are the chambers and whose morphisms are given by the homotopy classes of paths in $X(G)_\CC-\Hscr_\CC$ connecting
 the chambers. Then $\Pi_1(\tilde{S})$ is equivalent to the fundamental groupoid of $X(G)_\CC-\Hscr_\CC$.
 If $C$, $C''$ are adjacent chambers separated by a hyperplane $H\in \Hscr$, such that $H(C'')>0$ then there is a canonical (up to homotopy) minimal path $\ggamma_{C,C''}$  in $X(G)_\CC-\Hscr_\CC$ going from $C$ to $C''$ and passing through $\{\Imm H_\CC>0\}$. Sending~$C$ to $\Dscr_C$ and $\ggamma_{C,C''}$ to  $\phi_{C,C''}$ defines a representation of the groupoid $\Pi_1(\tilde{S})$ in triangulated categories.

 If $\chi\in X(G)$ then tensoring by $\chi$ defines an equivalence $\Dscr_{C}\r \Dscr_{C+\chi}$ and in this way the representation of $\Pi_1(\tilde{S})$ may be
 extended to a representation of $\Pi_1(\tilde{S})\rtimes X(G)$ and the latter is equivalent to the fundamental groupoid $\Pi_1(S)$ of $S=\tilde{S}/X(G)$
 \cite[Chapter 11]{BrownRonald}\cite[\S6]{HLSam}. Hence fixing a ``base chamber'' $C$ we get an action of $\pi_{1,C}(S)$ on $\Dscr_C$.

 \begin{remark}
   It is shown in \cite{SVdB10} that
   the family of triangulated categories $(\Dscr_C)_C$ for \emph{all} cells $C$ is a so-called $X(G)$-equivariant \emph{perverse schober}.
  This is a categorification of
 a perverse sheaf on $X(G)_\CC/X(G)$   \cite{KapranovSchechtmanSchobers}  (see also \cite{Donovan, BondalKapranovSchechtman}). Note that $X(G)_\CC/X(G)$ is a torus and $S$ is the complement of a ``toric hyperplane arrangement''.
 If $G$ is itself a torus $T$ then $X(T)_\CC/X(T)$ may be
 identified with the dual torus $T^\vee$.
 \end{remark}
 \begin{remark}
   The $X(G)$-equivariant hyperplane arrangement constructed by Halpern-Leistner and Sam in \cite{HLSam}
   is very similar to the $\operatorname{Cl}(R)$-equivariant hyperplane arrangement associated to a 3-dimensional terminal
   complete Gorenstein ring $R$  constructed by Iyama and Wemyss (see Example 4.12).
   One would expect there to be an associated equivariant schober also in this case.  In the case of a single curve flop this is essentially contained in
   \cite[\S3]{DWmutations}.
 \end{remark}
 \begin{remark} As explained we have an action of $\pi_1(S)$ on $\Dscr_C$ for a chamber $C$ and hence also an action of $\pi_1(S)$ on $K_0(\Dscr_C)_\CC$.
   In other words we have a local system $L$ on $S$. It is then a natural question if this local system occurs as the solutions of a natural system of differential
   equations. In the case that $G$ is a torus we show in \cite{SVdB1000} that a generic ``equivariant'' deformation of $L$ is obtained as the solution
   of a well-known system of differential equations introduced by Gel'fand, Kapranov and Zelevinsky \cite{GKZhyper}.
   This starts from a computation by Kite \cite{Kite} which shows that the hyperplane arrangement constructed in \cite{HLSam} is up to translation defined
   by the so-called ``principal $A$-determinant'', an important ingredient in the theory developed   Gel'fand, Kapranov and Zelevinsky.
   For more information see \cite{SpelaECM}.
 \end{remark}
 \begin{remark}
The themes touched upon in this section occur in many different contexts. See e.g.\ \cite{BHMB,ABM,BridgelandYuSutherland}.
\end{remark}
\section{NCCRs and stringy $E$-functions}
\label{sec:stringy}
In this section we discuss some ongoing work of Timothy De Deyn (see \cite{TDD}). Let~$X$ be an algebraic variety over $\CC$. The cohomology groups $H^i_c(X,\CC)$ carry a natural mixed Hodge structure. We denote
by $h^{p,q}(H^i_c(X,\CC))$ the dimension of the $(p,q)$-type component of $H^i_c(X,\CC)$. The \emph{Hodge polynomial} of $X$ is defined by
\[
E(X,u,v)=  \sum_{p,q,i} (-1)^i h^{p,q}(H^i_c(X,\CC))u^p v^p.
\]
The Hodge polynomial defines a ring homomorphism from the Grothendieck ring of algebraic varieties $K_0(\operatorname{Var}/\CC)$ to $\ZZ[u,v]$.

We put $e(X)=E(X,1,1)$; i.e.
\[
  e(X)=\sum_{i} (-1)^i \sum_{p,q} h^{p,q}(H^i_c(X,\CC))=\sum_i (-1)^i\dim H^i_c(X,\CC).
\]
In other words $e(X)$ is the \emph{Euler characteristic} (with compact support\footnote{If $X$ is smooth then, by Poincar\'e duality, the Euler characteristic with
  compact support coincides with the usual Euler characteristic $\sum_i (-1)^i\dim H^i(X,\CC)$.}) of $X$. It defines a ring homomorphism from $K_0(\operatorname{Var}/\CC)$ to $\ZZ$.
\begin{definition}[\protect{\cite[Definition 3.1]{Batyrev1}}]
  Assume that $X$ is a normal $\QQ$-Gorenstein algebraic variety$/\CC$ with at most log-terminal singularities and let $\pi:Y\rightarrow X$ be a resolution of singularities
whose exceptional locus is a normal crossing divisor. Let $D_1,\ldots,D_r$ be the irreducible components of the exceptional locus and put $I=\{1,\ldots,r\}$.
For any subset~$J\subset I$ we set   $D_J=\bigcap_{j\in J} D_j$, $D^\circ_J:=D_J\setminus \bigcup_{j\in I\setminus J} D_j$. The \emph{stringy $E$-function} of
$X$ is defined as
\begin{equation}
  \label{eq:stringy}
  E_{st}(X,u,v):=\sum_{J\subset I} E(D^\circ_J,u,v) \prod_{j\in J} \frac{uv-1}{(uv)^{a_j+1}-1}
\end{equation}
where the numbers $a_j\in \QQ\,\cap\, ]-1,\infty[$ are defined by
\[
  K_Y=\pi^\ast K_X+\sum_{j=1}^r a_j D_j.
  \]
\end{definition}
Putting $e_{st}(X)=\lim_{u,v\rightarrow 1} E_{st}(X,u,v)$ defines the \emph{stringy Euler characteristic} of $X$, with the formula
\[
  e_{st}(X)=\sum_{J\subset I} e(D^\circ_J) \prod_{j\in J} \frac{1}{a_j+1}.
\]

It follows from the theory of motivic integration (see e.g.\ \cite{Kon95,  DL1,Batyrev, CrawMotivic, Le_n_Cardenal_2020}) that $E_{st}(X,u,v)$ is independent of the
chosen resolution $Y$ \cite[Theorem 3.4]{Batyrev1}. Indeed $E_{st}(X,u,v)$ may be obtained by integrating over the arc space associated to $X$ \cite{DL1}. In a similar vein $E_{st}(X,u,v)=E_{st}(Y,u,v)$ holds for birational maps $\pi:Y\rightarrow X$ satisfying $\pi^\ast K_X=K_Y$ \cite[Theorem 3.12]{Batyrev1}.

\medskip

If $X$ is smooth then
the stringy $E$-function coincides with the Hodge polynomial. Hence one has: 
\begin{theorem}[\protect{\cite[Theorem 3.12]{Batyrev1}}] \label{th:stringy} 
If $X$ has a crepant resolution $Y$ then
the stringy $E$-function of~$X$ coincides with the Hodge polynomial of $Y$. In particular, it is a polynomial. Similarly, the stringy Euler characteristic of
$X$ coincides with the usual Euler characteristic of~$Y$ and hence it is an integer.
\end{theorem}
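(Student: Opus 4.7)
The plan is to reduce the claim to the invariance of $E_{st}$ under the choice of log resolution, a consequence of motivic integration already cited in the excerpt. The crucial input is that $\pi: Y \rightarrow X$ being crepant forces all relevant discrepancies to vanish, so the combinatorial correction factors in \eqref{eq:stringy} become trivial.

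Suppose first, for orientation, that $\pi$ itself has normal crossing exceptional divisor with components $D_1, \ldots, D_r$. Since $\pi^* K_X = K_Y$, every discrepancy $a_j$ equals zero, so each factor $(uv - 1)/((uv)^{a_j+1}-1)$ simplifies to $1$. Then \eqref{eq:stringy} collapses to
\[ E_{st}(X, u, v) = \sum_{J \subset I} E(D_J^\circ, u, v). \]
The subvarieties $(D_J^\circ)_{J \subset I}$ form a stratification of $Y$ by locally closed subsets, and the Hodge polynomial $E(-, u, v)$ is additive in the Grothendieck ring of varieties, so the right-hand side equals $E(Y, u, v)$. In particular $E_{st}(X, u, v)$ is a polynomial.

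For a general crepant resolution $\pi$ whose exceptional locus might fail to be normal crossing, one invokes the cited invariance directly: applied to the crepant birational morphism $\pi$, it yields $E_{st}(X, u, v) = E_{st}(Y, u, v)$. Since $Y$ is smooth, taking the identity $Y \rightarrow Y$ as a log resolution with empty exceptional divisor (i.e.\ $I = \emptyset$) immediately gives $E_{st}(Y, u, v) = E(Y, u, v)$. Specializing $u = v = 1$ then yields $e_{st}(X) = e(Y)$, which is an integer.

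The only substantive input is the invariance of $E_{st}$ under the choice of log resolution (equivalently, under crepant birational maps); this is the nontrivial content proved via motivic integration and here cited rather than reproved. Everything else is a formal manipulation exploiting the vanishing of discrepancies.
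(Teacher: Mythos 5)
Your argument is correct and matches the paper's own derivation: the theorem is obtained there exactly by combining the cited invariance $E_{st}(X,u,v)=E_{st}(Y,u,v)$ for crepant birational morphisms with the observation that $E_{st}$ coincides with the Hodge polynomial on smooth varieties. Your warm-up computation in the normal crossings case (all $a_j=0$, so the correction factors are $1$ and additivity of $E$ over the stratification by the $D_J^\circ$ gives $E(Y,u,v)$) is a correct, if optional, illustration of the same mechanism.
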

The following conjecture seems natural:
\begin{conjecture}[\protect{\cite{TDD}}]
  \label{qu:E}
  If $X$ is a normal Gorenstein variety$/\CC$ with an NCCR then its stringy $E$-function is a polynomial.
\end{conjecture}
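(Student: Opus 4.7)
The underlying heuristic is that an NCCR $\Lambda$ of $R$ should carry enough Hodge-theoretic information to compute $E_{st}(X)$, even when no geometric crepant resolution of $X$ exists. My plan splits into a geometric step that recovers all known cases, and a more delicate noncommutative step that would be needed for a fully general proof.

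First I would settle the case where the given NCCR arises from a tilting bundle on a smooth Deligne--Mumford stack $\Yscr$ admitting a crepant morphism to $X$. This covers the principal known families: finite unimodular group quotients via $\Lambda = \Sym(W)\#G$ and $\Yscr = [W/G]$ (\S\ref{ssec:qs}), quasi-symmetric reductive quotients via Theorem \ref{th:HLS1} whenever the complement of $\Hscr_0$ is non-empty, and three-dimensional Gorenstein toric singularities via Lemma \ref{prop:dm}. In all these cases the conjecture follows from the stacky extension of Theorem \ref{th:stringy} (obtained via motivic integration on DM stacks \`a la Denef--Loeser--Yasuda): $E_{st}(X)$ equals the orbifold Hodge polynomial $E_{\text{orb}}(\Yscr)$, which is manifestly a polynomial. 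For a general NCCR one would next attempt a partial tilting reduction in the spirit of \S\ref{ssec:partial}: pick any resolution $\pi : Y \to X$ (which exists since $R$ has rational singularities by Stafford--Van den Bergh), try to realize $\Lambda$ as $\uEnd_Y(\Tscr)$ for a partial tilting complex, and produce a semi-orthogonal decomposition $\Dscr(Y) = \langle \Dscr(\Lambda), \Dscr_1, \ldots, \Dscr_r \rangle$ whose complementary pieces are derived categories of smooth projective varieties with manifestly polynomial Hodge data. One would then compute $E_{st}(X)$ via the Denef--Loeser change-of-variables formula applied to $\pi$ and show that the discrepancy contributions conspire with $\sum_i E(\Dscr_i)$ to cancel against $E(Y)$ and leave behind a polynomial.

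The main obstacle is that no such ambient geometric realization is guaranteed to exist for an arbitrary NCCR (cf.\ Example \ref{ex:dao}, where even a crepant resolution of a factorial hypersurface admits no tilting complex). For the fully general case one would instead place a mixed Hodge structure on the Hochschild or cyclic homology of $\Lambda$, in the spirit of Kaledin's noncommutative Hodge theory, and prove an identification of the form
\[
E_{st}(X) \;=\; \sum_{p,q,i} (-1)^i\, h^{p,q}(HH_i(\Lambda))\, u^p v^q.
\]
Polynomiality of the right-hand side would follow automatically once one knows $HH_*(\Lambda)$ is finite-dimensional and vanishes outside a bounded range of degrees, which in turn is a consequence of $\gldim \Lambda < \infty$ together with the maximal Cohen--Macaulay condition. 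Establishing such an identification without assuming the existence of a geometric or stacky crepant model is where the genuinely new technical work lies, and is presumably what De Deyn's ongoing program addresses: one would have to develop a motivic integration formalism that takes the NCCR data as input directly, bypassing any choice of resolution of $X$.
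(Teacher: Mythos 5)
The statement you are addressing is not proved in the paper: it is stated as an open conjecture (attributed to De Deyn's work in progress), and the author explicitly cautions that ``it is probably safer to regard it as a question.'' The paper's only support consists of verified examples (finite unimodular quotients, Gorenstein toric varieties, cones over $\Gr(d,n)$ with $\ggd(d,n)=1$) together with counterexamples showing that the natural extensions to weakly crepant categorical resolutions and to twisted NCCRs both fail. So your text should be read as a research program, and judged as such it has two concrete problems.

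First, your geometric step (reduce to a crepant resolution by a smooth DM stack and invoke a stacky version of Theorem \ref{th:stringy}) does not cover the most significant piece of evidence in the paper. For the cone over $\Gr(d,n)$ with $\ggd(d,n)=1$ the ring $R$ is factorial, and the paper points out that precisely for this reason there is \emph{no} crepant resolution by a smooth DM stack; yet Doyle's NCCR exists and $E_{st}$ is a polynomial. These are exactly the cases the conjecture is designed to capture beyond the stacky situation, so your plan defers all of the genuinely new content to the second step. Second, that second step contains an error: $HH_*(\Lambda)$ is \emph{not} finite-dimensional for an NCCR $\Lambda=\End_R(M)$ with $R$ of positive dimension --- already $HH_0(\Lambda)=\Lambda/[\Lambda,\Lambda]$ surjects onto abelianizations containing copies of $R$, and finite global dimension is irrelevant here (for $\Lambda=R$ smooth affine, $HH_*(R)\cong\Omega^*_{R/\CC}$ is infinite-dimensional). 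This is exactly why the paper's Conjecture \ref{conj:periodic} is formulated with \emph{periodic cyclic} homology, and even then only at the level of the Euler characteristic $e_{st}(X)=E_{st}(X,1,1)$, not the full two-variable $E$-function. Upgrading to the Hodge polynomial would require a noncommutative Hodge structure on $\HP^*(\Lambda)$ whose existence and degeneration properties are themselves open; your claim that polynomiality ``would follow automatically'' from $\gldim\Lambda<\infty$ and the Cohen--Macaulay condition is unsupported and, as stated for $HH$, false.
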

We give some evidence for this conjecture below, but at this point it is probably safer to
regard it as a question. We illustrate below in Remark \ref{rem:weakly} and Example \ref{ex:twisted} that reasonable extensions of this conjecture
are false.
\begin{example}
  \label{ex:finite}
  Quotient varieties of the form $\CC^n\quot G$ for $G\subset \Sl(n)$ finite always have a stringy $E$-function which is a polynomial by \cite{BaDa}\cite[Theorem 3.6]{DL2}.
  They also have an NCCR by \S\ref{ssec:qs}. So in this case Conjecture \ref{qu:E} is true.
\end{example}
\begin{example}
  \label{ex:battoric}
Batyrev proves in \cite[Proposition 4.4]{Batyrev} that the stringy $E$-function of any toric variety with Gorenstein singularities is a polynomial. Hence
Conjecture \ref{qu:E} is compatible
with  Conjecture \ref{conj:nccrtoric}.
\end{example}
  A good test for Conjecture \ref{qu:E} is given by cones over Fano varieties.
  \begin{proposition}[\protect{\cite{TDD}}]
    \label{prop:cone} Let $Z$ be a smooth projective variety$/\CC$ with ample line bundle $\Oscr_Z(1)$ and let $X=\Spec R$ be the corresponding cone.
  Assume $\omega_Z=\Oscr_Z(-n)$ for $n\ge 1$.  Then $R$ is Gorenstein
and
\begin{equation}
  \label{eq:Est}
   E_{st}(X,u,v)=E(Z,u,v) \frac{(q-1)q^n}{q^n-1}
 \end{equation}
 with $q=uv$.  In particular:
 \begin{equation}
   \label{eq:estformula}
   e_{st}(X)=\frac{e(Z)}{n}.
 \end{equation}
\end{proposition}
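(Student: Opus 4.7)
The plan is to compute $E_{st}(X,u,v)$ directly from its definition \eqref{eq:stringy} using the resolution $\pi\colon Y\to X$ from Proposition \ref{prop:Kuz}. Concretely, $Y$ is the total space of the line bundle $L$ over $Z$ with $L^\vee=\Oscr_Z(1)$, so that $\Gamma(Y,\Oscr_Y)=\bigoplus_{k\ge 0}H^0(Z,\Oscr_Z(k))=R$. The exceptional locus is the zero section $D\cong Z$, a single smooth divisor (so automatically a simple normal crossing divisor), and $\pi$ restricts to an isomorphism $Y\setminus D\xrightarrow{\sim}X\setminus\{0\}$.

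First I would pin down the unique discrepancy $a$ in $K_Y=\pi^*K_X+aD$. Using the relative cotangent formula $\omega_{Y/Z}=p^*L^\vee$ (for $p\colon Y\to Z$ the bundle projection) together with $\omega_Z=\Oscr_Z(-n)$ one obtains $\omega_Y=p^*\Oscr_Z(1-n)$; likewise the conormal computation gives $\Oscr_Y(D)=p^*L=p^*\Oscr_Z(-1)$. To finish, I would note that the graded canonical module of $R$ is $R(-n)$, which is free of rank one as an ungraded $R$-module, so $R$ is Gorenstein and $\omega_X\cong\Oscr_X$ as a sheaf on $X$; hence $\pi^*\omega_X\cong\Oscr_Y$ and comparison yields $a=n-1$. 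Since $a\ge 0>-1$, $X$ is log-terminal and the stringy $E$-function is defined.

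Next, applying \eqref{eq:stringy} with $I=\{1\}$, only the subsets $J=\emptyset$ and $J=\{1\}$ contribute, giving
\begin{equation*}
E_{st}(X,u,v)=E(Y\setminus D,u,v)+E(Z,u,v)\cdot\frac{q-1}{q^n-1}.
\end{equation*}
The stratum $Y\setminus D$ is the complement of the zero section in a line bundle over $Z$, hence a Zariski locally trivial $\CC^*$-bundle over $Z$. By multiplicativity of $E$ in $K_0(\operatorname{Var}/\CC)$ one gets $E(Y\setminus D,u,v)=E(Z,u,v)(q-1)$. Substituting and combining over the common denominator $q^n-1$ directly produces \eqref{eq:Est}. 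For \eqref{eq:estformula} one takes $q\to 1$: factoring $q^n-1=(q-1)(1+q+\cdots+q^{n-1})$ makes the limit of $(q-1)q^n/(q^n-1)$ transparently equal to $1/n$.

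The only genuinely delicate step is the discrepancy computation, which relies on correctly identifying $\omega_X$ with $\Oscr_X$ as a sheaf despite the nontrivial graded shift in $\omega_R\cong R(-n)$; once this is in hand everything else is straightforward motivic bookkeeping, and independence from the chosen resolution is guaranteed by \cite[Theorem 3.4]{Batyrev1}.
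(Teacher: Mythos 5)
Your computation is correct, and since the paper gives no proof of this proposition (it defers to \cite{TDD}) there is nothing to compare against; your argument --- taking the resolution $Y=\operatorname{Tot}(\Oscr_Z(-1))\to X$, identifying the discrepancy of the zero section as $a=n-1$ from $\omega_Y=p^*\Oscr_Z(1-n)$, $\Oscr_Y(D)=p^*\Oscr_Z(-1)$ and $\omega_X\cong\Oscr_X$, and then summing the two strata $Y\setminus D$ and $D\cong Z$ in \eqref{eq:stringy} --- is surely the intended one, and the final bookkeeping and the $q\to 1$ limit are both right. The one point worth tightening is the Gorenstein claim: freeness of $\omega_R\cong R(-n)$ yields Gorenstein only once $R$ is known to be Cohen--Macaulay, which here follows from Kodaira vanishing plus Serre duality (giving $H^i(Z,\Oscr_Z(k))=0$ for all $0<i<\dim Z$ and all $k\in\ZZ$ because $\omega_Z=\Oscr_Z(-n)$); note that for the stringy $E$-function itself only the Cartier-ness of $K_X$ and $a>-1$ are needed, and those you do establish.
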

\begin{example} \label{ex:timothy}
  Consider the Grassmannian
  $Z:=\Gr(d,n)$. 
  Then (e.g.\ \cite[Proposition A.4]{BoLi})
  \begin{equation}
    \label{eq:Estgr}
    E_{st}(Z,u,v)= {n\choose d}_q
  \end{equation}
  where
  \[
    {n\choose d}_q=\frac{(q^n-1)(q^{n-1}-1)\cdots (q^{n-d+1}-1)}{(q-1)(q^2-1)\cdots (q^d-1)}.
  \]
  Hence
  \begin{equation}
    \label{eq:chooseformula}
    e_{st}(Z)={n\choose d}.
  \end{equation}
  Let $X$ be the cone over $Z$ with respect to the Plucker embedding and let $R$ be the coordinate ring of $X$.
  Using \eqref{eq:Est}\eqref{eq:Estgr} it is shown in \cite{TDD} that in this case $E_{st}(X,u,v)$   is a polynomial precisely when $\ggd(d,n)=1$. 
  On the other hand   by \cite[Theorem 3.11]{Doyle} (see Example \ref{ex:doyle} above) $R$ has an NCCR when $\gcd(d,n)=1$. So  Conjecture \ref{qu:E} is true in this case.
  \end{example}
\begin{remark}
  \label{rem:weakly}
  It is shown in in \cite[\S4.2]{TDD} that the cone over an arbitrary Grassmannian $\Gr(d,n)$ always has a weakly crepant categorical resolution (see \S\ref{sec:ccr}).  Hence it follows
   that Conjecture \ref{qu:E} is false for weakly crepant categorical resolutions. On the other hand it seems reasonable to extend
   Conjecture \ref{qu:E} to strongly crepant categorical resolutions.
\end{remark}
\begin{remark} In view of Theorem \ref{th:stringy} one may naively ask if it is true that $E_{st}(X,u,v)$  being a polynomial implies that $X$ has a crepant resolution. Not unexpectedly this fails drastically. Finite group quotients
  and affine toric varieties have a polynomial stringy $E$-function, as we have seen above, but they don't need to have a crepant resolution.
  In fact, the example $\CC^4\quot \ZZ_2$ given in  \S\ref{ssec:qs} of a Gorenstein singularity with an NCCR but
  without a crepant resolution, lives in both classes. On the other hand these classes of counterexamples are not very convincing since they admit crepant resolutions by smooth Deligne Mumford 
  stacks, which is just as good. In the case of finite quotient singularities this is clear, and for toric varieties it follows from Lemma~\ref{prop:dm}.

  In contrast one may show that for $X$ as in Example \ref{ex:timothy} there is no crepant resolution by a smooth DM-stack (this is mainly because $R$ is factorial).
  So in some sense it is a ``better'' counterexample (when $\gcd(d,n)\neq 1$).
  \end{remark}
GIT quotients form an important
class of toric varieties (see \cite[Corollary 14.2.16]{CoxLittleSchenck}).
So in view of Example \ref{ex:battoric} as well as Example \ref{ex:finite}
the following question by Batyrev suggests itself.
  \begin{question}[\protect{\cite[Question 5.5]{Batyrev1}}]
    Does a GIT quotient of $\CC^n$ for a linear action of
    $G \subset \Sl(n)$ always have a stringy $E$-function that is a polynomial?
\end{question}
Alas the answer
is negative. Indeed Example \ref{ex:timothy} for $\ggd(d,n)\neq 1$ gives a simple counterexample since the cone over a Grassmannian
is a GIT quotient for $\Sl(d)$ acting on $n$ copies of its standard representation.

\medskip

The  first counterexample however was constructed much earlier in \cite{MR1953528}.
\begin{example}
  \label{ex:twisted}
  Let $W$ be given by three copies of the adjoint representation of $G=\Sl(2)$. Then by a quite involved computation it is shown in \cite[Corollary 1.2]{MR1953528} that the stringy $E$-function
  of $\Spec R$ for $R=\Sym(W)^G$ is not a polynomial.
  This example is interesting since by \cite[Theorem 1.4.5]{SVdB} $R$ has a twisted NCCR.
  In other words, Conjecture \ref{qu:E} is also false for twisted NCCRs.

  As a side remark we  note that this twisted NCCR is a rather classical object. It is the trace ring generated by 3 generic traceless $2\times 2$ matrices  \cite{Ar4, Raz,  Proc,Proc2,Leb3, LeBrVdB}.  
\end{example}
In the setting of Theorem \ref{th:stringy} the Euler characteristic of $Y$ can be computed using periodic cyclic homology, thanks to the Hochschild-Kostant-Rosenberg theorem. Below we define the Euler characteristic $e(\Lambda)$ of an algebra $\Lambda$ or a sheaf of algebras as
$\dim\HP^{\text{even}}(\Lambda)-\dim\HP^{\text{odd}}(\Lambda)$. If $\Lambda$ is a quasi-coherent
sheaf of algebras then we use $\HP^{*}(\Lambda):=\HP^{*}(\Perf_{dg}{\Lambda})$ (where $\Perf_{dg}(\Lambda)$ is a standard dg-enhancement of $\Perf(\Lambda)$). 
The following conjecture appears plausible.
\begin{conjecture}
  \label{conj:periodic}
 The stringy Euler characteristic of a normal Gorenstein variety can be computed as the Euler characteristic of an NCCR, computed via periodic cyclic homology.
\end{conjecture}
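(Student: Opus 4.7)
The plan is to attack Conjecture \ref{conj:periodic} by first establishing it in the cases where a commutative crepant resolution is available, and then trying to propagate the result to the general setting using derived-categorical comparisons between NCCRs.

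Concretely, the first step combines three ingredients: Theorem \ref{th:stringy}, which says $e_{st}(X)=e(Y)$ when a crepant resolution $\pi:Y\rightarrow X$ exists; the Hochschild--Kostant--Rosenberg theorem, which (via the Hodge-type decomposition of $\HP^*$) identifies $e(Y)$ with $\dim\HP^{\text{even}}(Y)-\dim\HP^{\text{odd}}(Y)$; and the derived-invariance of periodic cyclic homology on dg-enhancements, together with Theorem \ref{th:iw}, which guarantees that any NCCR $\Lambda$ that is derived-equivalent to $Y$ satisfies $\HP^*(\Lambda)\cong \HP^*(Y)$. This handles the case where $X$ admits at least one commutative crepant resolution derived-equivalent to $\Lambda$; under Conjecture \ref{ref-4.6-10}, every NCCR is automatically of this form.

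The second step addresses cases where no commutative crepant resolution is available. Two sub-classes admit a reasonably direct check. For quotient singularities $\Spec \Sym(W)^G$ with $G$ finite and unimodular (see \S\ref{ssec:qs}), one has $\Lambda=\Sym(W)\# G$, and $\HP^*(\Lambda)$ admits a well-known orbifold-type decomposition indexed by conjugacy classes $[g]$ of $G$ with contributions coming from the fixed loci $W^g$; on the stringy side one has the analogous decomposition of \cite{BaDa}, and one should match the two expressions term by term. For toric NCCRs arising from Theorem \ref{th:broomhead} or Corollary \ref{cor:162}, the strategy is similar: both sides can be made combinatorial, and the comparison should reduce to a polytope identity compatible with Batyrev's computation (Example \ref{ex:battoric}).

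The main obstacle is the general case, most acutely the examples such as $\CC^4\quot\ZZ_2$ with weights $(-1,-1,-1,-1)$, or the NCCRs constructed via Theorem \ref{th:mainth} for quasi-symmetric representations, where no commutative crepant resolution exists. Here one needs either a non-commutative analogue of motivic integration, or a direct comparison between $\HP^*(\Lambda)$ and the stringy $E$-function computed from a non-crepant resolution via \eqref{eq:stringy}. A promising route uses the categorical resolution structure of Lemma \ref{ref-1.1.2-2}, combined with a semi-orthogonal decomposition of $\Dscr(Y)$ for a suitable (non-crepant) resolution $Y$, matching the correction factors $\prod_{j}(uv-1)/((uv)^{a_j+1}-1)$ in \eqref{eq:stringy} with contributions from the orthogonal complement of the embedded $\Dscr(\Lambda)$ inside $\Dscr(Y)$, and then specialising $u,v\to 1$. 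Making this precise---essentially a non-commutative Kontsevich--Denef--Loeser identity---is where the substantive difficulty will lie.
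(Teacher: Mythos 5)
The statement you set out to prove is labelled a \emph{conjecture} in the paper, and the paper gives no proof of it: it is motivated only by the observation (which your first step reproduces) that when a crepant resolution $Y\rightarrow X$ exists and is derived equivalent to the NCCR $\Lambda$, Theorem \ref{th:stringy}, the Hochschild--Kostant--Rosenberg theorem and the derived invariance of periodic cyclic homology combine to give $e_{st}(X)=e(Y)=e(\Lambda)$; beyond that the paper offers only consistency checks (Examples \ref{ex:finite2}, \ref{ex:catalan}, \ref{ex:motivic}), mostly for the weaker counting Conjecture \ref{con:counting}. Your proposal is therefore not a proof, and you essentially say so yourself. Your first step is conditional on the open Conjecture \ref{ref-4.6-10}: an NCCR need not a priori be derived equivalent to a given crepant resolution, and Theorem \ref{th:iw} does not supply such an equivalence, it only characterises crepancy once an equivalence is already given. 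Your final step, which is where all the content of the conjecture lies (e.g.\ $\CC^4\quot\ZZ_2$, the quasi-symmetric quotients of \S\ref{sec:quotient}, and the factorial cones of Example \ref{ex:timothy} for which not even a stacky crepant resolution exists), is explicitly left as a ``non-commutative Kontsevich--Denef--Loeser identity'' still to be found. That is precisely the open problem, not a reduction of it.

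Two smaller points. In the finite-group case the orbifold decomposition of $\HP^*(\Sym(W)\# G)$ over conjugacy classes, each class contributing the two-periodicized cohomology of an affine fixed locus, does match Batyrev's identification of $e_{st}$ with the number of conjugacy classes, so that sub-case is correct in outline and consistent with Example \ref{ex:finite2}. In the toric case, however, ``both sides can be made combinatorial'' is optimistic: there is no known closed formula for $\HP^*(\End_R(M))$ of a toric NCCR in terms of the polytope $P$, and even the weaker assertion that the number of indecomposable summands of $M$ equals $\Vol(P)$ is itself only Conjecture \ref{conj:volume}. So the gap is genuine: your proposal correctly isolates the hard case but does not close it, and the statement remains a conjecture.
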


\begin{remark} One can again not expect this conjecture to hold for twisted NCCRs. An interesting example is given in \cite{BorisovWang}. It
  was shown by \cite{BonOrl,MR2419925} that for a generic complete intersection $Y$ of $n$ quadrics in $\PP^{2n-1}$ one has a derived equivalence
  between $Y$ and $(\PP^{n-1},\Bscr_0)$ where $\Bscr_0$ is the even part of the universal Clifford algebra corresponding to the quadrics defining $Y$.
  Because of the derived equivalence we then have $e(Y)=e(\Bscr_0)$ \cite{kellerexact}. One may show that $\Bscr_0$ is a twisted NCCR of its center which is a double cover $Z$ of $\PP^{n-1}$ \cite[\S1]{BorisovWang}. 
  It is shown in \cite{BorisovWang} that in general $e(Y)\neq e_{st}(Z)$ and hence $e(\Bscr_0)\neq e_{st}(Z)$. So Conjecture \ref{conj:periodic} does not extend to
  twisted NCCRs.
\end{remark}

In suitable ``local'' contexts
(e.g.\ \cite[Theorem 9.1]{MR3338683})
Conjecture \ref{conj:periodic} leads to a more  concrete conjecture:\footnote{To handle the complete case one has to use a ``completed'' version of periodic cyclic homology. See \cite{MR3338683}}
\begin{conjecture}[\protect{\cite{TDD}}]
  \label{con:counting}
  Let $R$ a normal Gorenstein ring  which is either a complete local ring, or else connected $\NN$-graded (i.e.\ $R_0=\CC$).
  Assume that $R$ has an NCCR $\End_R(M)$. Then the number of non-isomorphic indecomposable  summands
  of $M$ is equal to $e_{st}(X)$ for $X=\Spec R$.
\end{conjecture}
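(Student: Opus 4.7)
My proposal is to deduce Conjecture \ref{con:counting} from Conjecture \ref{conj:periodic} together with a concrete computation of the periodic cyclic Euler characteristic of an NCCR in the local or connected graded setting. The plan is to reduce the statement to the identity $e(\Lambda)=n$, where $\Lambda=\End_R(M)$ is an NCCR, $n$ is the number of non-isomorphic indecomposable summands of $M$, and $e(\Lambda)=\dim\HP^{\text{even}}(\Lambda)-\dim\HP^{\text{odd}}(\Lambda)$ computed using the appropriate completed version of periodic cyclic homology (as is forced in this context; see \cite{MR3338683}).

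First I would replace $M$ by its basic representative $M^{\text{bas}}=M_1\oplus\cdots\oplus M_n$, writing $M\cong M_1^{\oplus a_1}\oplus\cdots\oplus M_n^{\oplus a_n}$ with pairwise non-isomorphic indecomposable $M_i$. This exists by Krull--Schmidt, which holds because $R$ is either complete local or connected $\NN$-graded (so $\End_R(M_i)$ is local). The resulting algebra $\Lambda^{\text{bas}}=\End_R(M^{\text{bas}})$ is Morita equivalent to $\Lambda$, is again an NCCR, and the number of non-isomorphic indecomposable summands of $M^{\text{bas}}$ is still $n$. Since both periodic cyclic homology and the stringy invariant $e_{st}(X)$ depend only on $\Spec R$, it suffices to prove $e(\Lambda^{\text{bas}})=n$, assuming Conjecture \ref{conj:periodic} to identify $e(\Lambda^{\text{bas}})$ with $e_{st}(X)$.

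The core computation is then to show that $\HP^*(\Lambda^{\text{bas}})\cong\HP^*(\Lambda^{\text{bas}}/\rad\Lambda^{\text{bas}})\cong\CC^n$ concentrated in even degree, using the fact that by construction $\Lambda^{\text{bas}}/\rad\Lambda^{\text{bas}}\cong \CC^{n}$ (one primitive idempotent per indecomposable summand, which in the graded case follows by working modulo $\Lambda^{\text{bas}}_{>0}$ and in the complete local case by Azumaya's theorem applied to the finite-dimensional algebra $\Lambda^{\text{bas}}/\mathfrak{m}_R\Lambda^{\text{bas}}$). The passage from $\Lambda^{\text{bas}}$ to $\Lambda^{\text{bas}}/\rad\Lambda^{\text{bas}}$ should be effected by a Goodwillie-type invariance theorem: in the connected graded case one deforms along the $\GG_m$-action contracting to the vertex, so that the completed $\HP$ of $\Lambda^{\text{bas}}$ and of its associated graded (eventually of its semisimple quotient) coincide; in the complete local case one filters by powers of $\rad\Lambda^{\text{bas}}$ and uses the completed $\HP$ adapted to the $\mathfrak{m}_R$-adic topology.

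The main obstacle is making this last step rigorous. Ordinary $\HP$ is \emph{not} invariant along adic completions (the basic example being $\HP^*(\CC[x])\neq \HP^*(\CC[[x]])$ in the naive sense), so the statement $\HP^*(\Lambda^{\text{bas}})\cong\HP^*(\Lambda^{\text{bas}}/\rad\Lambda^{\text{bas}})$ is simply false for the uncompleted functor. One therefore has to commit to the completed variant used in \cite{MR3338683}, verify that Conjecture \ref{conj:periodic} is naturally phrased in that variant (this is consistent with the footnote in the excerpt), and then prove a Goodwillie-style theorem in the completed setting that handles both the connected graded and the complete local case uniformly. A secondary technical point is that $\Lambda^{\text{bas}}/\rad\Lambda^{\text{bas}}\cong\CC^n$ requires the residue field hypothesis and, in the graded case, compatibility between the grading radical $\Lambda^{\text{bas}}_{>0}$ and the Jacobson radical; isolating a clean statement that covers both setups, and checking that NCCR hypotheses propagate under the relevant reductions, is where I expect most of the work to lie.
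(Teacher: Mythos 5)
The statement you are trying to prove is a \emph{conjecture}: the paper offers no proof of it, and your proposal does not close that gap. What you have written is precisely the heuristic the paper itself uses to motivate the statement --- the sentence immediately preceding Conjecture \ref{con:counting} says that ``in suitable `local' contexts'' Conjecture \ref{conj:periodic} \emph{leads to} Conjecture \ref{con:counting}, citing \cite[Theorem 9.1]{MR3338683} for the completed periodic cyclic homology needed, and the proof of Lemma \ref{lem:rkK0} spells out the same three ingredients you invoke (Morita/derived invariance of $\HP^*$, Goodwillie's theorem $\HP^*(A)\cong\HP^*(A/\rad A)$, and $K_0(A)=K_0(A/\rad A)$). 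So your reduction ``Conjecture \ref{conj:periodic} $+$ completed Goodwillie $+$ $\Lambda^{\text{bas}}/\rad\Lambda^{\text{bas}}\cong\CC^n$ $\Rightarrow$ Conjecture \ref{con:counting}'' is faithful to the intended logic, and your bookkeeping (passing to the basic representative via Krull--Schmidt, Morita invariance of $\HP^*$) is correct.

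The genuine gaps are the two you already flag, and they are exactly why this remains a conjecture rather than a theorem. First, Conjecture \ref{conj:periodic} is itself open, so any argument conditional on it proves nothing unconditionally. Second, Goodwillie's theorem as cited (\cite[Theorem II.5.1]{Goodwillie}) applies to nilpotent ideals; $\rad\Lambda^{\text{bas}}$ is not nilpotent when $R$ has positive dimension, so the step $\HP^*(\Lambda^{\text{bas}})\cong\HP^*(\Lambda^{\text{bas}}/\rad\Lambda^{\text{bas}})$ requires a completed (adic or graded) version of $\HP^*$ together with a pro-nilpotent extension of Goodwillie's theorem, which is available in special settings (this is the role of \cite[Theorem 9.1]{MR3338683}) but is not established at the level of generality of the conjecture. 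You should present your text as a conditional reduction and as an explanation of why the conjecture is plausible --- which is how the paper treats it --- not as a proof.
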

\begin{example} If $X=\Spec R$ is an affine toric variety as in \S\ref{sec:toric}, with Gorenstein singularities, then Batyrev \cite[Proposition 4.10]{Batyrev1} proves that
  $e_{st}(X)$ is equal to the volume of the associated polytope $P$ (see \S\ref{sec:toric}). So Conjecture \ref{con:counting} is compatible with Conjecture \ref{conj:volume}. 
\end{example}
\begin{example}
  \label{ex:finite2}
  For varieties of the form $X=W\quot G=\Spec \CC[W]^G$ for $G\subset \Sl(n)$ finite and $W$ a finite dimensional representation of $G$ it follows from
  \cite[Theorem 8.4]{Batyrev2} that $e_{st}(X)$ is equal to the number of conjugacy classes
  in $G$. This number is in turn equal to the number of irreducible representations of $G$ and hence equal to the number of non-isomorphic indecomposable
  summands of the reflective $\CC[W]^G$-module $\CC[W]$ which defines an NCCR for $\CC[W]^G$ by \S\ref{ssec:qs}. So Conjecture \ref{con:counting} is true
  in this specific example.
\end{example}

\begin{example} \label{ex:catalan}
  Let $X$ be the cone over $\Gr(d,n)$ as in Example \ref{ex:timothy}. Then by \eqref{eq:estformula}\eqref{eq:chooseformula} we have
 \[
   e_{st}(X)=\frac{1}{n}{n\choose d}.
 \]
We check that Conjecture \ref{con:counting} is compatible with the NCCR constructed by Doyle in \cite[Theorem 3.11]{Doyle} (see Example \ref{ex:doyle} above). 
Conjecture \ref{con:counting} amounts to
 \[
   |P_{d,n-d}|=\frac{1}{n}{n\choose d}.
 \]
 which is indeed true by \cite[\S12.1]{Loehr}.
\end{example}
Example \ref{ex:catalan} can be put in a more general context. Let us first state a lemma.
\begin{lemma} \label{lem:rkK0}
  Let $Z$ be a smooth projective variety with a tilting complex. Then one has $e(Z)=\rk K_0(Z)$.
\end{lemma}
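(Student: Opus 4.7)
The plan is to pass through the endomorphism algebra of the tilting complex. Set $\Lambda=\End_Z(\Tscr)$. Since $Z$ is smooth and projective over $\CC$, $\Lambda$ is a finite-dimensional $\CC$-algebra of finite global dimension, and Theorem~\ref{th:kellerkrause} provides an equivalence $\Dscr(Z)\simeq \Dscr(\Lambda^\circ)$ which lifts to canonical dg-enhancements; in particular it preserves both $K_0$ and Hochschild homology. Writing $\Tscr=\bigoplus_{i=1}^r T_i$ into indecomposable summands, Krull--Schmidt identifies the $T_i$ with the $r$ indecomposable projective $\Lambda^\circ$-modules, hence with the $r$ simple modules; since $\gldim\Lambda<\infty$, these simples form a $\ZZ$-basis of $K_0(\Lambda^\circ)$, so $\rk K_0(Z)=r$.

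For the Euler characteristic I would argue via Hochschild homology. Ordinary algebras satisfy $HH_n(\Lambda)=0$ for $n<0$, and via tilting the same vanishing holds for $HH_n(Z)$. The HKR decomposition $HH_n(Z)\cong \bigoplus_{q-p=n} H^q(Z,\Omega^p_Z)$ then forces $H^q(Z,\Omega^p_Z)=0$ whenever $q<p$; combined with Hodge symmetry $h^{p,q}=h^{q,p}$, this gives $h^{p,q}(Z)=0$ for all $p\ne q$. Consequently
\[
e(Z)=\sum_{p,q}(-1)^{p+q} h^{p,q}(Z)=\sum_p h^{p,p}(Z)=\dim_\CC HH_0(Z)=\dim_\CC HH_0(\Lambda)=\dim_\CC \Lambda/[\Lambda,\Lambda].
\]

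Thus the lemma reduces to the identity $\dim_\CC \Lambda/[\Lambda,\Lambda]=r$ for a finite-dimensional $\CC$-algebra of finite global dimension. After Morita-reducing to the basic case and invoking Gabriel's theorem, $\Lambda\cong \CC Q/I$ for a finite quiver $Q$ with $r$ vertices. Finite global dimension forces $Q$ to be acyclic: any cycle, together with the admissible relations needed to keep $\Lambda$ finite-dimensional, would make the simple at a vertex on the cycle have a periodic and hence infinite projective resolution. For an acyclic $Q$ every nonzero path $p$ of positive length has distinct source $i_0$ and target $i_n$, so $e_{i_0}p=0$ while $pe_{i_0}=p$, giving $p=[p,e_{i_0}]\in [\Lambda,\Lambda]$; hence $\rad(\Lambda)\subseteq [\Lambda,\Lambda]$ and $\Lambda/[\Lambda,\Lambda]\cong \CC^r$, spanned by the primitive idempotents. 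The acyclicity of $Q$ is the main (and essentially the only) nontrivial ingredient; the rest of the argument is formal.
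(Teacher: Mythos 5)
Your computation of $\rk K_0(Z)$ is fine, and your Hodge-theoretic step is correct and is a genuinely different route from the paper's: the vanishing of $HH_n(\Lambda)$ in negative degrees plus derived invariance, HKR and Hodge symmetry do force $h^{p,q}(Z)=0$ for $p\neq q$, whence $e(Z)=\dim HH_0(Z)=\dim_\CC \Lambda/[\Lambda,\Lambda]$. The gap is in the final algebraic step. The claim that finite global dimension forces the Gabriel quiver to be acyclic is false: the Auslander algebra of $\CC[x]/(x^2)$ (two vertices, one arrow in each direction, one zero relation) has global dimension $2$ and an oriented cycle in its quiver, and endomorphism algebras of tilting bundles arising in practice routinely have oriented cycles (with relations). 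Your heuristic that a cycle produces a periodic, hence infinite, projective resolution is only plausible for loops (this is the content of the ``strong no loop conjecture''); for longer cycles it simply fails. So the inclusion $\rad\Lambda\subseteq[\Lambda,\Lambda]$ cannot be obtained by the commutator-with-idempotents trick alone.

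The identity $\dim_\CC\Lambda/[\Lambda,\Lambda]=r$ that you need is nevertheless true, but it is a real theorem rather than a formal observation. One can invoke Lenzing's theorem that $\rad\Lambda\subseteq[\Lambda,\Lambda]$ for artinian rings of finite global dimension, or, staying closer to the paper, use periodic cyclic homology: since you have already shown that $HH_*(\Lambda)\cong HH_*(Z)$ is concentrated in degree $0$, one gets $\HP^{\text{even}}(\Lambda)=HH_0(\Lambda)$ and $\HP^{\text{odd}}(\Lambda)=0$, while Goodwillie's theorem gives $\HP^{*}(\Lambda)\cong\HP^{*}(\Lambda/\rad\Lambda)=\HP^{*}(\CC^{r})$, so $\dim HH_0(\Lambda)=r$. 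Note that the paper's own proof avoids the Hodge-theoretic detour entirely: it computes $e$ throughout as the Euler characteristic of periodic cyclic homology, which is derived invariant and, by Goodwillie, unchanged by killing the radical, giving $e(Z)=e(\Lambda)=e(\Lambda/\rad\Lambda)=r=\rk K_0(Z)$ without ever needing the diagonal concentration of Hodge numbers.
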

\begin{proof}
  Let $\Tscr$ be the tilting complex and put $A=\End_Z(\Tscr)$.
We have:
\begin{enumerate}
\item Euler characteristics may be computed with periodic cyclic homology.
\item Periodic cyclic homology is invariant under derived equivalence \cite{kellerexact}, and so are Grothendieck groups;
\item $\HP^\ast(A)=\HP^\ast(A/\rad A)$ by Goodwillie's theorem \cite[Theorem II.5.1]{Goodwillie}, and the standard fact that $K_0(A)=K_0(A/\rad A)$.
\end{enumerate}
So we conclude
\[
  e(Z)\overset{(1,2)}{=}e(A)\overset{(3)}{=}\rk K_0(A)\overset{(2)}{=}\rk K_0(Z).\qedhere
\]
\end{proof}
Let us go back to the setting of Proposition \ref{prop:cone} but assume now in addition that $Z$ has a tilting complex.
Then by \eqref{eq:estformula} combined with Lemma \ref{lem:rkK0}
we get
  \[
    e_{st}(X)=\frac{\rk K_0(Z)}{n}.
  \]
  Assuming that $R$ has a graded NCCR $\Lambda$ then Conjecture \ref{con:counting} implies
  \begin{equation}
    \label{eq:frac}
    \rk K_0(\Lambda)=\frac{\rk K_0(Z)}{n}.
  \end{equation}
  \begin{example}
    \label{ex:motivic} This formula holds for the NCCRs constructed via Proposition~\ref{prop:Kuz}. Indeed $\rk K_0(\Lambda)$ is given by the number $u$ of non-isomorphic indecomposable
    summands of $\Escr$. On the other hand $\Dscr(Z)$ has a semi-orthogonal decomposition consisting of $n$ parts whose $K_0$ also has rank $u$. So \eqref{eq:frac} does indeed hold
    and we obtain again some evidence for Conjecture \ref{con:counting}. 
  \end{example}
  \begin{remark}
    One way to think of this example as the realization of the (conjectured) ``motivic'' identity \eqref{eq:frac}  via semi-orthogonal decompositions
    of derived categories. See \cite{PolVdB} for another (deeper) instance of this principle. 
  \end{remark}

  \begin{ack}
    First and foremost I am grateful to my coauthor and friend \v{S}pela \v{S}penko for contributing much of the mathematics of our joint work. Without her input this survey would have
    been a lot shorter.
 
Furthermore    I thank Shinnosuke Okawa for readily answering all my questions about the minimal model program. Likewise I thank Michael Wemyss for input on the non-commutative geometry of cDV singularities.
\end{ack}

\begin{funding}
This project has received funding from the European Research Council (ERC) under the European Union's Horizon 2020 research and innovation programme (grant agreement No 885203).
\end{funding}
\def\Spenko{\v{S}penko } \def\Sp{Spe}
\providecommand{\bysame}{\leavevmode\hbox to3em{\hrulefill}\thinspace}
\providecommand{\MR}{\relax\ifhmode\unskip\space\fi MR }
\providecommand{\MRhref}[2]{%
  \href{http://www.ams.org/mathscinet-getitem?mr=#1}{#2}
}
\providecommand{\href}[2]{#2}

\end{document}